\theoremstyle{plain}
\newtheorem{theorem}{Theorem}
\newtheorem*{theorem*}{Theorem}
\newtheorem{lemma}{Lemma}
\newtheorem{corollary}{Corollary}
\theoremstyle{definition}
\newtheorem{definition}{Definition}
\theoremstyle{remark}
\newtheorem{remark}{Remark}
\let\svthefootnote\thefootnote
\newcommand\blankfootnote[1]{%
  \let\thefootnote\relax\footnotetext{#1}%
  \let\thefootnote\svthefootnote%
}
\let\svfootnote\footnote
\renewcommand\footnote[2][?]{%
  \if\relax#1\relax%
    \blankfootnote{#2}%
  \else%
    \if?#1\svfootnote{#2}\else\svfootnote[#1]{#2}\fi%
  \fi
}
\DeclareMathOperator{\ord}{\mathrm{ord}}
\DeclareMathOperator{\cl}{\mathrm{cl}}
\DeclarePairedDelimiter\intpart{\lfloor}{\rfloor}
\newcommand{\GG}{\hat{G}}
\newcommand{\bigast}{\mathbin{\text{\Large$*$}}}
\newcommand{\normalast}{\mathbin{\text{\normalsize$*$}}}
\newcommand{\dsumI}{\mathbin{\vcenter{\hbox{$\mathsmaller{\mathsmaller{\stackrel{\prec}{\prec}}}$}}}}
\newcommand{\dsumIIr}{\mathbin{\mathrm{\sqsupset}}}
\newcommand{\dsumIIl}{\mathbin{\mathrm{\sqsubset}}}
\newcommand{\dsumIII}{\mathbin{\mathrm{\asymp}}}
\begin{document}

{\LARGE\bfseries\center
Powers with minimal commutator length \par
in free products of groups \par
}

\vspace{0.3cm}

{\scshape\large\center Vadim Bereznyuk \par}

\vspace{0.7cm}

\begin{adjustwidth}{1.6cm}{1.6cm}
\begin{spacing}{1}
{\footnotesize Given a free product of groups $G=\normalast_{j\in J} A_j$ and a natural number $n$, what is the minimal possible commutator length of an element $g^n \in G$ not conjugate to elements of the free factors? We give an exhaustive answer to this question.
\footnote[]{This work was supported by the Russian Science Foundation, project no. 22-11-00075.}}
\end{spacing}

\vspace{0.2cm}
{\footnotesize \noindent\textit{Keywords:} commutator length; stable commutator length; free products of groups}
\end{adjustwidth}

\section{Introduction}
It is well known that a proper power of a nonidentity element cannot be a commutator in a free group~\cite{Sch59}. Clearly, the square of a nonidentity element can be a product of two commutators and the cube of a nonidentity element can be a product of three commutators. Culler~\cite{Cull81} showed that in the free group~$F(a, b)$ a cube can be a product of two commutators: $[a,b]^3 = [a^{-1}ba, a^{-2}bab^{-1}][bab^{-1}, b^2]$, where $[a, b] \coloneqq a^{-1} b^{-1} a b$. Moreover, Culler showed that the element~$[a, b]^n$ can always be decomposed into a product of $\intpart*{n \mathbin{/} 2} + 1$ commutators (where $\intpart*{x}$ is the integer part of $x$). The minimal integer~$k$ such that an element~$g$ of a group~$G$ can be decomposed into a product of $k$ commutators is called the commutator length of~$g$ and denoted by $\cl(g)$. Thus $\cl([a, b]^n) \leq \intpart*{n \mathbin{/} 2} + 1$.

It turned out that this estimate is sharp for free groups: for any nonidentity element~$g$ of a free group $\cl(g^n) \geq \intpart*{n \mathbin{/} 2} + 1$. First it was proved by Comerford, Comerford and Edmunds~\cite{CCE91} for products of only $2$ commutators and then it was proved by Duncan and Howie~\cite{DH91} in the general case. Moreover, they proved a similar assertion for free products of locally indicable groups: if $g$ is an element of a free product of locally indicable groups such that $g$ is not conjugate to elements of the free factors, then $\cl(g^n) \geq \intpart*{n \mathbin{/} 2} + 1$. This assertion turned out to be true in a free product of arbitrary torsion-free groups, it was independently discovered by Ivanov and Klyachko~\cite{IK18} and Chen~\cite{Ch18}.

\begin{definition}
Let $G$ be a group with a fixed free-product decomposition: $G=\bigast_{j\in J} A_j$. We denote by $\GG$ the set of all elements of $G$ not conjugate to elements of the free factors, and define $k(G, n)$ as the minimal number $k$ such that an element $g^n \in \GG$ can be decomposed into a product of $k$ commutators.
\end{definition}

Thus it follows from~\cite{Cull81} in conjunction with~\cite{IK18} or~\cite{Ch18} that
$$k(G, n) = \intpart*{\frac{n}{2}} + 1$$
for free products of torsion-free groups. For free products of arbitrary groups Culler's estimate is not sharp any longer. For example, in the free product $\langle a \rangle_3 * \langle b \rangle$ a cube can be a commutator: $[a, b]^3 = [b^{-1}aba, ab^{-1}ab]$. We denote by $N(G)$ the minimal order of a nonidentity element of $G$. In~\cite{IK18} it was proved that the same estimate as for free products of torsion-free groups holds true for free products of arbitrary groups, but only if $n$ is relatively small: 
$$k(G, n) = \intpart*{\frac{n}{2}} + 1 \text{,}\quad \text{ if } n < N(G)\text{.}$$
Whereas in~\cite{Ch18} it was proved that 
$$k(G, n) \geq \intpart*{\frac{n - \intpart*{\frac{2n}{N(G)}}}{2}} + 1\text{.}$$
The author and Klyachko~\cite{BK22} generalized these estimates. It was shown that
$$k(G, n) \geq \intpart*{\frac{n}{2}} - \intpart*{\frac{n}{N(G)}} + 1\text{.}$$
In this work we prove that this estimate is sharp.

\begin{theorem} \label{main_theorem}
Let $G=\bigast_{j\in J} A_j$ be a free product of nontrivial groups and $n$ be a positive integer. Then
$$k(G,n) = \intpart*{\frac{n}{2}} - \intpart*{\frac{n}{N(G)}} + 1\text{.}$$ 
\end{theorem}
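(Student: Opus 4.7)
The lower bound $k(G,n) \geq \intpart*{n/2} - \intpart*{n/N(G)} + 1$ was established in~\cite{BK22}, so proving Theorem~\ref{main_theorem} reduces to showing, for each positive integer $n$, that there exists an element $g \in G$ with $g^n \in \GG$ and $\cl(g^n) \leq \intpart*{n/2} - \intpart*{n/N(G)} + 1$. Because $\GG \neq \varnothing$ forces $|J| \geq 2$, I fix distinct indices $j_1, j_2 \in J$, an element $a \in A_{j_1}$ of minimal order $N \coloneqq N(G)$, and any nontrivial $b \in A_{j_2}$. My candidate is $g \coloneqq [a,b]$: the power $[a,b]^n$ is cyclically reduced of syllable length greater than one and hence lies in $\GG$, so the problem becomes the purely combinatorial one of factoring $[a,b]^n$ as the required number of commutators. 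For $n < N$ the target coincides with Culler's bound $\intpart*{n/2} + 1$ and is classical; the genuine content concerns $n \geq N$, where the torsion relation $a^N = 1$ must be leveraged to save $\intpart*{n/N}$ commutators.

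The plan has two stages. \emph{Stage 1 (core identity):} produce an explicit identity writing $[a,b]^N$ as $\intpart*{N/2}$ commutators, one fewer than Culler allows in the free case. The prototype is the identity $[a,b]^3 = [b^{-1}aba,\,ab^{-1}ab]$ in $\langle a\rangle_3 * \langle b\rangle$, where two commutators collapse to one precisely because $a^3 = 1$. I would look for a generalization in which the two arguments of each commutator are conjugates of $b$ by successive powers of $a$, arranged so that a telescoping cancellation manufactures an $a^N$ that kills one handle of the bounding surface. \emph{Stage 2 (splicing):} writing $n = qN + r$ with $0 \leq r < N$, combine $q$ copies of the core identity with Culler's factorization of $[a,b]^r$; a naive concatenation produces too many commutators, so the splice must be arranged so that the boundary between adjacent blocks merges, via a conjugation, into a single commutator rather than generating a new one. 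A careful case-analysis on the parities of $N$ and $q$ then verifies that the total matches the non-monotonic target $\intpart*{n/2} - q + 1$; for example with $N = 3$ one has $k(G,8) = 3$ yet $k(G,9) = 2$, warning that the splice cannot be uniform in the residue of $n$ modulo $N$.

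The principal obstacle is Stage 1: this core identity is a genuinely new surface-group relation rather than a cosmetic rewriting of Culler's formula, and I expect its discovery to be the most delicate combinatorial step. If a clean closed-form identity for every $N$ resists discovery, a fallback is an induction on $n$ in steps of $N$: assume the bound for $[a,b]^{n-N}$, and show that multiplying by $[a,b]^N$ costs only $\intpart*{N/2} - 1$ additional commutators by absorbing one of them into the last commutator of the previous factorization through a conjugation that uses $a^N = 1$. Either route reduces the upper-bound statement to a concrete, if intricate, word identity in free products.
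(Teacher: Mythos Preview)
Your high-level strategy matches the paper's exactly: cite \cite{BK22} for the lower bound, take $g=[a,b]$ with $a$ of minimal order $N$ in one factor and $b$ nontrivial in another, and establish $\cl([a,b]^n)\le\lfloor n/2\rfloor-\lfloor n/N\rfloor+1$. The paper likewise disposes of $n<N$ via Culler and of $N=2$ via a direct one-commutator identity.

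The divergence is in execution, and this is where your proposal has a genuine gap. You propose to discover explicit word identities (your ``Stage~1 core identity'' for $[a,b]^N$) and then splice them, and you candidly flag Stage~1 as the principal obstacle without resolving it. The paper circumvents the search for closed-form identities altogether by working geometrically: it constructs \emph{Howie diagrams}---one-face cell decompositions of closed oriented surfaces whose face label is $[a,t]^n$ and whose vertex labels become trivial exactly when $a^N=1$---and proves (Lemma~\ref{diagram_lemma}) that such a diagram on a genus-$k$ surface yields a product of $k$ commutators. The diagrams are assembled from a handful of small building blocks (a base block $D_{N,N}$ of genus $\lfloor(N-1)/2\rfloor$ or $N/2$, an auxiliary $D_{+2}$ raising the exponent by $2$ for one handle, and so on) glued by several composition operations that control precisely how genera and exponents combine. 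This is morally your Stage~1/Stage~2 plan, but the diagram language converts the ``delicate combinatorial step'' you anticipate into mechanical bookkeeping across four parity cases in $(N,n)$. Notably, the paper never writes a closed-form word identity for general $N$; the diagrams \emph{are} the identities, encoded topologically.

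So your plan is not wrong, but the content you identify as the main difficulty is exactly what the paper supplies, and it does so by changing language from words to surfaces rather than by finding the telescoping trick you sketch. If you want to push the purely word-based route, the paper's diagrams can in principle be unwound via Lemma~\ref{diagram_lemma} into explicit commutator factorizations, but that unwinding is what the geometric formalism is designed to avoid.
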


Actually, we prove a more general result.

\begin{definition}
Let $G$ be a group with a fixed free-product decomposition: $G=\bigast_{j\in J} A_j$. For an element $g \in \GG$ with a cyclically reduced form $a_{j_1, 1} \ldots a_{j_m, m}$ (where $a_{j_i, i} \in A_{j_i}$), we denote by $N(g)$ the minimal order of its letters $a_{j_1, 1}$, ..., $a_{j_m, m}$. For $N \in \{N(g) \mid g \in \GG\}$ we define $k(G, n, N)$ as the minimal number $k$ such that an element $g^n \in \GG$ with $N(g)=N$ can be decomposed into a product of $k$ commutators.  
\end{definition}

\begin{theorem} \label{general_theorem}
Let $G=\bigast_{j\in J} A_j$ be a free product of nontrivial groups and $n$ be a positive integer. If $N \in \{N(g) \mid g \in \GG\}$, then 
$$k(G,n,N) = \intpart*{\frac{n}{2}} - \intpart*{\frac{n}{N}} + 1\text{.}$$
\end{theorem}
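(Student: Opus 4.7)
The lower bound $k(G,n,N) \ge \intpart*{n/2} - \intpart*{n/N} + 1$ I would extract from a refinement of the author--Klyachko bound of \cite{BK22}: the estimate there is stated in terms of the global invariant $N(G)$, but the counting argument used in its proof depends only on the minimal order of the letters in the cyclically reduced form of the element whose commutator length is being estimated, so it yields the sharper pointwise inequality $\cl(g^n) \ge \intpart*{n/2} - \intpart*{n/N(g)} + 1$ for each $g \in \GG$. The real content of Theorem~\ref{general_theorem} is therefore the matching upper bound: for each admissible $N$, exhibiting some $g \in \GG$ with $N(g) = N$ whose $n$-th power decomposes into exactly $\intpart*{n/2} - \intpart*{n/N} + 1$ commutators.

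As a test element I would take $g = [a,b]$, with $a \in A_{j_1}$ of order exactly $N$ (of infinite order when $N = \infty$) and $b$ any nontrivial element of a different factor $A_{j_2}$ of order at least $N$ --- such a factor $A_{j_2}$ exists by the hypothesis that $N$ is achievable as some $N(g')$. Then $g$ is cyclically reduced with $N(g) = N$, and the whole task reduces to producing an explicit identity
\[
[a,b]^n \;=\; \prod_{i=1}^{\intpart*{n/2} - \intpart*{n/N} + 1}\,[u_i, v_i]
\]
in $G$. The case $N = \infty$ recovers Culler's classical bound $\intpart*{n/2} + 1$; the new content is to exploit the relation $a^N = 1$ to save one commutator for every complete block of $N$ copies of $[a,b]$ that fits inside $[a,b]^n$.

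The strategy is an induction on $n$ in steps of $2$ that mimics Culler's identity $[a,b]^3 = [a^{-1}ba,\, a^{-2}bab^{-1}][bab^{-1},\,b^2]$ at each step, so that a generic step advances $n$ by $2$ and adds one commutator --- \emph{except} at the ``magic'' values of $n$ where $\intpart*{n/N}$ increments, where the inductive step is replaced by a torsion-saving identity generalizing $[a,b]^3 = [b^{-1}aba,\,ab^{-1}ab]$ (the $N=3$ case, valid because $a^3 = 1$). Such a magic step advances $n$ by $2$ without introducing a new commutator, the saving being paid for by $a^N = 1$ collapsing two adjacent letters that would otherwise have required an extra commutator to separate. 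Summing over the $\intpart*{n/N}$ magic values inside the induction matches the target count exactly.

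The principal difficulty is the explicit construction of the torsion-saving identities. Already for $N \ge 4$ the naive block identity realizing $[a,b]^N$ as $\intpart*{N/2}$ commutators cannot simply be iterated --- for $n = 2N$ the block-iterated count overshoots the target by one --- so the torsion saving must be embedded at the correct syllabic position inside the Culler induction, which constrains the choice of the commutator representatives $u_i, v_i$. Finding these representatives so that the Culler telescoping and the torsion collapse coexist is the main combinatorial obstacle. Once the right family of identities is written down, each identity is verified by a routine normal-form computation in the free product, and matching the final count $\intpart*{n/2} - \intpart*{n/N} + 1$ across all $n$ reduces to a case analysis on $n \bmod N$.
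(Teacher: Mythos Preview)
Your reduction is exactly the paper's: the lower bound comes from the $N(g)$-refined version of the Bereznyuk--Klyachko estimate (this is what the paper cites as the ``Main theorem'' of \cite{BK22}), and the upper bound is achieved on $g=[a,t]$ with $\ord(a)=N\le\ord(t)$. So the overall architecture is correct.

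The substantive difference, and the gap, is in how you propose to establish $\cl([a,t]^n)\le\intpart{n/2}-\intpart{n/N}+1$. You outline an algebraic induction in steps of $2$, punctuated by ``torsion-saving'' steps where $\intpart{n/N}$ increments, but you do not produce these identities; you say explicitly that ``finding these representatives \ldots\ is the main combinatorial obstacle.'' That is precisely the content of the theorem, and it is not addressed. The difficulty is real: a torsion-saving step of the kind you describe would have to take an arbitrary length-$k$ commutator expression for $[a,t]^n$ and rewrite it as a length-$k$ expression for $[a,t]^{n+2}$, which requires control over the internal structure of the $u_i,v_i$ that a bare induction hypothesis does not provide. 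Your own observation that naive block iteration already overshoots at $n=2N$ shows the obstruction.

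The paper avoids this entirely by working geometrically rather than with word identities. It builds, for each admissible pair $(n,N)$, a one-face Howie diagram $D_{n,N}$ on a closed oriented surface of genus $\intpart{n/2}-\intpart{n/N}+1$ with face label $[a,t]^n$ and all vertices interior whenever $a^N=1$ (Lemma~\ref{D_n_N_lemma}); Lemma~\ref{diagram_lemma} then converts such a diagram into a commutator product of the right length. The diagrams are assembled from a small kit of basic pieces ($D_{3,3}$, $D_{2,2}$, $D_{3,2}$, $D_{+2}$, $D_{+N+1,N}$) via several gluing operations, with a case split on the parities of $N$, $r$, $q$ in $n=rN+q$. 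The ``torsion saving'' you are looking for is encoded in the degrees of the $A^\pm$-vertices: an $A^+$-vertex of degree $N$ is interior precisely because $a^N=1$, and this is what lets the genus drop by one per full block of $N$. If you want to pursue the purely algebraic route, the diagrams can in principle be unwound into explicit words via the path-label machinery of Section~\ref{howie_diagram_section}, but the paper does not do this and it is not clear the resulting identities would fit any clean inductive pattern of the shape you propose.
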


Similarly to Culler's examples, the minimal commutator length is achieved on powers of commutators. 

\begin{theorem} \label{a_t_theorem}
Let $G=\bigast_{j\in J} A_j$ be a free product of nontrivial groups and $n$ be a positive integer. If $a \in A_{j_1}$ and $t \in A_{j_2}$ are two nonidentity elements lying in different free factors such that $\ord(a) \leq \ord(t)$, then 
$$\cl([a, t]^n) = \intpart*{\frac{n}{2}} - \intpart*{\frac{n}{\ord(a)}}+ 1\text{.}$$
\end{theorem}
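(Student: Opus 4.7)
The lower bound is immediate from Theorem~\ref{general_theorem}: the cyclically reduced form of $[a,t]$ is $a^{-1}t^{-1}at$, so $[a,t] \in \GG$ and $N([a,t]) = \min(\ord(a), \ord(t)) = \ord(a)$ by the hypothesis $\ord(a) \leq \ord(t)$. The content of the theorem is therefore the matching upper bound: an explicit decomposition of $[a,t]^n$ into $\intpart*{n/2} - \intpart*{n/\ord(a)} + 1$ commutators.

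My plan is to combine Culler's ladder-style identity for powers of commutators with the torsion relation $a^m = 1$ (writing $m = \ord(a)$), so that each full batch of $m$ consecutive factors $[a,t]$ saves exactly one commutator compared with Culler's torsion-free count $\intpart*{n/2} + 1$. The template is already visible in the introduction: Culler's formula $[a,b]^3 = [a^{-1}ba, a^{-2}bab^{-1}][bab^{-1}, b^2]$ uses two commutators in a free group, but with $a^3 = 1$ the second commutator evaporates and we obtain the single-commutator identity $[a,b]^3 = [b^{-1}aba, ab^{-1}ab]$. Concretely, I would first produce a single-commutator expression $[u_0, v_0] = [a,t]^m$ in $A_{j_1} * A_{j_2}$ whose $u_0, v_0$ are explicit words in $a$ and $t$ using $a^m = 1$ essentially. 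Writing $n = qm + r$ with $0 \leq r < m$, I would then splice $q$ such blocks together with a Culler-style tail for $[a,t]^r$, organising the junctions so that a power of $a$ that would otherwise require an extra closing commutator is killed by the torsion relation each time a new block is attached. Verifying that the final product indeed equals $[a,t]^n$ in $G$ is then a word-combinatorial calculation inside $A_{j_1} * A_{j_2}$.

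The subtlest case, and where I expect the main obstacle to lie, is $m = 2$: here the formula predicts $\cl([a,t]^n) \leq 1$ for every $n$, forcing every single power $[a,t]^n$ to be realised as one explicit commutator. This cannot come from iterating a fixed block identity, so a genuinely global construction is needed---most plausibly of the form $[a, w_n]$ for an inductively defined word $w_n$ that exploits $a = a^{-1}$ crucially at each step. My strategy would be to isolate this extremal case first, extract the pattern behind the choice of $w_n$, and then reuse it both to build the basic block identity $[a,t]^m = [u_0, v_0]$ and to organise the gluing rule for arbitrary $m \geq 2$.
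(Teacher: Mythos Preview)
There are two issues, one minor and one substantive.

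On the lower bound: in this paper Theorem~\ref{general_theorem} is deduced \emph{from} Theorem~\ref{a_t_theorem}, so citing it here is circular. The correct source for $\cl([a,t]^n) \geq \intpart{n/2} - \intpart{n/\ord(a)} + 1$ is the main theorem of~\cite{BK22}, applied directly to $g=[a,t]$; that is exactly what the paper does.

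The substantive gap is your basic building block. You propose a single-commutator identity $[a,t]^m=[u_0,v_0]$ whenever $a^m=1$, but for $m\geq 4$ no such identity exists: the very formula you are proving gives $\cl([a,t]^m)=\intpart{m/2}-1+1=\intpart{m/2}\geq 2$. Your earlier sentence is correct---torsion saves exactly one commutator relative to Culler's $\intpart{m/2}+1$---but that brings the block down to $\intpart{m/2}$ commutators, not to one; the $m=3$ example from the introduction is misleading precisely because $\intpart{3/2}=1$. Worse, even with the correct $\intpart{m/2}$-commutator blocks, plain concatenation fails for even $m$: with $m=4$, $n=8$ two blocks give $2+2=4$ commutators while the target is $3$. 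So for even $m$ you would need a genuine junction-saving mechanism, and you have not indicated what it would be.

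The paper's route is entirely different and geometric. Lemma~\ref{D_n_N_lemma} builds, for each relevant parity pattern of $(n,N)$, an explicit one-face Howie diagram on a closed oriented surface of the target genus, assembled from small base diagrams via several gluing operations; Lemma~\ref{diagram_lemma} then converts surface genus into a commutator count. The case $N=2$ is handled separately by the explicit formula $[a,t]^n=[a,\,t^{(-1)^{n+1}}at^{(-1)^n}\cdots at^{(-1)^2}]$, which confirms your $[a,w_n]$ intuition there, and a couple of residual parity cases ($N$ odd, $n$ even) are finished by subadditivity of $\cl$. A purely word-combinatorial proof along your lines would have to produce, for general $m$, explicit $\intpart{m/2}$-commutator identities for $[a,t]^m$ \emph{together with} gluing rules that merge adjacent blocks at cost strictly below additivity; extracting those would essentially amount to reading them off from the paper's diagrams, and is considerably more delicate than your sketch suggests.
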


\begin{remark}
If $N(G)$, $N$ or $\ord(a)$ is infinite, we naturally consider $\intpart*{n \mathbin{/} N(G)}$, $\intpart*{n \mathbin{/} N}$ or $\intpart*{n \mathbin{/} \ord(a)}$ to be zero. In that case $k(G, n)$, $k(G, n, N)$ or $\cl([a, t]^n)$ is equal to $\intpart*{n \mathbin{/} 2} + 1$ which corresponds to the known results for free products of torsion-free groups.
\end{remark}

\begin{corollary}
If $a$ and $b$ are two nonidentity elements of a group $G$ and $n$ is a positive integer, then
$$\cl([a, b]^n) \leq \intpart*{\frac{n}{2}} - \intpart*{\frac{n}{\ord(a)}} + 1\text{.}$$
\end{corollary}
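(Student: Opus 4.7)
The plan is to reduce to Theorem~\ref{a_t_theorem} by lifting the pair $(a,b)$ to a free product of cyclic groups. Let $\tilde G := \langle \tilde a\rangle * \langle \tilde b\rangle$ with $\ord(\tilde a) = \ord(a)$ and $\ord(\tilde b) = \ord(b)$. Because $\ord(\tilde a) = \ord(a)$ and $\ord(\tilde b) = \ord(b)$, the assignments $\tilde a\mapsto a$, $\tilde b\mapsto b$ define homomorphisms on each factor, and the universal property of the free product produces a homomorphism $\phi\colon \tilde G\to G$ extending them. Since commutator length is non-increasing under homomorphisms,
$$\cl([a,b]^n) = \cl\bigl(\phi([\tilde a,\tilde b]^n)\bigr)\le \cl([\tilde a,\tilde b]^n),$$
so it suffices to bound $\cl([\tilde a,\tilde b]^n)$ inside the free product $\tilde G$.

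Now apply Theorem~\ref{a_t_theorem} to $\tilde G$. If $\ord(a)\le\ord(b)$, the theorem directly gives
$$\cl([\tilde a,\tilde b]^n) = \intpart*{\frac{n}{2}} - \intpart*{\frac{n}{\ord(a)}} + 1,$$
which is already the required bound. If instead $\ord(a) > \ord(b)$, then $[\tilde a,\tilde b]^{-1} = [\tilde b,\tilde a]$ and the invariance of $\cl$ under inversion yield $\cl([\tilde a,\tilde b]^n) = \cl([\tilde b,\tilde a]^n)$; applying Theorem~\ref{a_t_theorem} to the pair $(\tilde b,\tilde a)$ produces
$$\cl([\tilde b,\tilde a]^n) = \intpart*{\frac{n}{2}} - \intpart*{\frac{n}{\ord(b)}} + 1 \le \intpart*{\frac{n}{2}} - \intpart*{\frac{n}{\ord(a)}} + 1,$$
since $\ord(b) < \ord(a)$ forces $\intpart*{n/\ord(b)} \ge \intpart*{n/\ord(a)}$. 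In either case the inequality of the corollary follows.

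There is no genuine obstacle: once Theorem~\ref{a_t_theorem} is in hand, this corollary is a routine transfer along the canonical homomorphism from the universal free product of the cyclic groups generated by $a$ and $b$. The only point requiring a touch of care is to ensure that Theorem~\ref{a_t_theorem} is invoked with the element of smaller order in the first slot, which is handled by the trivial inversion trick above.
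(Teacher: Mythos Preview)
Your argument is correct and is precisely the intended deduction: the paper states this corollary immediately after Theorem~\ref{a_t_theorem} without a separate proof, and the passage from the free product $\langle \tilde a\rangle * \langle \tilde b\rangle$ to an arbitrary group via the canonical homomorphism (together with the inversion trick when $\ord(a)>\ord(b)$) is exactly what makes it a corollary. There is nothing to add.
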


For example, if $a^3=1$, then all powers of $[a, b]$ up to $7$ are equal to a product of $2$ commutators. Moreover, $[a, b]^9$ is also a product of $2$ commutators and $[a, b]^3$ is a commutator itself. If $a^4=1$, then $[a, b]^4$ is a product of $2$ commutators and $[a, b]^8$ is a product of $3$ commutators.

A geometric language is used to prove these theorems: for each $a$, $t$ and $n$ we construct a Howie diagram $D$ on a closed oriented surface of genus $\intpart*{n \mathbin{/} 2} - \intpart*{n \mathbin{/} \ord(a)} + 1$, such that $D$ has only one face, the label of this face is $[a, t]^n$ and all vertices of $D$ are interior.

We start with the definition of Howie diagrams and their relation to products of commutators in Section~\ref{howie_diagram_section}. Diagrams for $[a, t]^n$ with the minimal possible genus are constructed in Section~\ref{minimal_diagrams_section}. These diagrams are used to prove the theorems in Section~\ref{proof_section}.

\section{Howie diagrams and products of commutators} \label{howie_diagram_section}

Diagrams similar to those we will now define were introduced by Howie in~\cite{How83} and were considered in~\cite{Kl93}, \cite{Le09}, \cite{IK18}, \cite{BK22}, and many other works. Here we use the definitions from~\cite{BK22}. Namely, suppose that $S$ is a closed oriented surface, and $\Gamma$ is a finite undirected graph which is embedded into $S$ and divides it into simply connected domains. Such a graph determines a cell decomposition of $S$, i.e., a mapping $\mathrm{M}$ called a map on $S$:
$$
\mathrm{M} : \bigsqcup_{i=1}^m D_i \to S \text{,}
$$
where $D_i$ are two-dimensional disks. The mapping $\mathrm{M}$ is continuous, surjective and injective on the interior (i.e., on $\bigsqcup\limits_{i=1}^m(D_i\setminus \partial D_i)$), the preimage of each point is finite, and the preimage of the graph $\Gamma$ is the union of the boundaries of the faces: ${\mathrm{M}}^{-1}(\Gamma)=\bigsqcup_{i=1}^m \partial D_i$. The preimages of the vertices of $\Gamma$ are called corners of the map. We say that a corner $c$ is at a vertex~$v$ if ${\mathrm{M}}(c)=v$. The vertices and edges of $\Gamma$ are referred to as vertices and edges of the map $\mathrm{M}$. The disks $D_i$ are called faces or cells of the map.

Such a map is called a diagram over a free product $A*B$ if:

\begin{enumerate}
    \item The graph $\Gamma$ is bipartite. There are two types of vertices: $A$-vertices and $B$-vertices, and each edge joins an~$A$-vertex with a $B$-vertex.
    \item The corners at $A$-vertices are labeled by elements of the group $A$ and the corners at $B$-vertices are labeled by elements of $B$.
    \item Some vertices are distinguished and called exterior. All the other vertices are called interior.
    \item The label of each interior $A$-vertex equals $1$ in the group $A$ and the label of each interior $B$-vertex equals $1$ in the group $B$, where the label of a vertex is the product of labels of corners at this vertex taken clockwise (thus the label of a vertex is defined up to conjugation in $A$ or $B$).
\end{enumerate}

\begin{remark}
    Note that a single vertex on a sphere is not a correct diagram since we require each point of the sphere to have a finite preimage, but the preimage of this vertex is $\partial D_1$.
\end{remark}

The label of a face of a diagram is the product of labels of all corners of this face taken counterclockwise. It is an element of the free product $A*B$ defined up to conjugation.

Let us look at an example of a diagram over the free product $\langle a \rangle_3 * \langle b \rangle_3$ shown in Fig.~\ref{fig:diagram_example}. It is placed on a torus represented as a rectangle with opposite sides identified. The diagram has two interior vertices, three edges and a face whose label is $(ab)^3$. This is a geometric interpretation of the fact that the element $(ab)^3$ is a commutator in the free product $\langle a \rangle_3 * \langle b \rangle_3$. It follows from the next lemma.

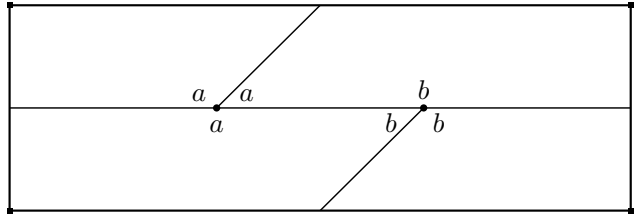
\begin{figure}[!h]
    \centering
    \bigskip
    \resizebox{0.6\textwidth}{!}{\begin{tikzpicture}
    [xscale=2,
     yscale=1,
     inner sep=0pt,
     vertex/.style={shape=circle,fill=black,minimum size=2pt},
     rect_vertex/.style={shape=rectangle,fill=black,minimum size=1.5pt},
     label/.style={scale=0.75,shape=circle,minimum size=0pt}]

\fontsize{10}{12}\selectfont

\draw[semithick]
(0, 0) rectangle (3, 2)
(0, 0) node[rect_vertex] {}
(0, 2) node[rect_vertex] {}
(3, 0) node[rect_vertex] {}
(3, 2) node[rect_vertex] {}

(1, 1) node[vertex] {}
(1, 1) node[label,left=6.5pt,above=1pt] {$a$}
(1, 1) node[label,right=11pt,above=1pt] {$a$}
(1, 1) node[label,left=0pt,below=3pt] {$a$}

(2, 1) node[vertex] {}
(2, 1) node[label,left=12pt,below=1pt] {$b$}
(2, 1) node[label,right=5.5pt,below=1pt] {$b$}
(2, 1) node[label,left=0pt,above=2.5pt] {$b$}
;

\draw
(0, 1) -- (3, 1)
(1, 1) -- (1.5, 2)
(1.5, 0) -- (2, 1)
;
\end{tikzpicture}}
    \caption{A diagram on a torus.}
    \label{fig:diagram_example}
\end{figure}

\begin{lemma}\label{diagram_lemma}
Let $u$ be a cyclically reduced element of a free product $A * B$ not conjugate to elements of the free factors. If there is a diagram $D$ over $A * B$ on a closed oriented surface of genus $k$ such that $D$ has only one face, the label of this face is $u$ and all the vertices of $D$ are interior, then $u$ is a product of $k$ commutators.
\end{lemma}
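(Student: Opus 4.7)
The plan is to realize the diagram geometrically as an admissible surface for $u$. Remove a small open disk $\delta$ from the interior of the unique face of $D$ and set $\Sigma := S \setminus \mathrm{int}(\delta)$; this is an oriented surface of genus $k$ with one boundary circle, and the classical fact that the surface group of genus $k$ has presentation $\langle x_1, y_1, \ldots, x_k, y_k \mid \prod_{i=1}^k [x_i, y_i] \rangle$ says exactly that $[\partial \Sigma]$ is a product of $k$ commutators in $\pi_1(\Sigma) \cong F_{2k}$. Thus it suffices to produce a continuous map $f \colon \Sigma \to K(A*B, 1)$ whose restriction to $\partial \Sigma$ represents the conjugacy class of $u$: applying $f_{*}$ will then exhibit $u$ as a product of $k$ commutators in $A*B$, since commutator length is a conjugacy invariant.

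The map $f$ is built from the diagram using the interior-vertex condition. Take $Y := K(A, 1) \vee K(B, 1)$ as a model for $K(A*B, 1)$, and thicken $\Gamma$ inside $S$ to a regular neighborhood made from a polygonal disk $P_v$ around each vertex $v$ and a rectangular strip $R_e$ along each edge $e$. Declare $f$ to be constant at the basepoint on every strip $R_e$. The boundary circle $\partial P_v$ alternates between arcs shared with adjacent strips (already mapped to the basepoint) and arcs lying across the corners of $v$; send the arc at the $i$-th corner to a loop in the factor of $Y$ corresponding to $v$'s type that represents the corner label $g_i$. Reading $\partial P_v$ clockwise then yields the vertex label $w_v = g_1 g_2 \cdots g_{d_v}$, which equals $1$ in $A$ (or $B$) because $v$ is interior, so this loop is null-homotopic in $Y$ and $f$ extends continuously across $P_v$. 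What remains of $\Sigma$ is an annular collar between the outer boundary of the thickened graph and $\partial \Sigma$; extend $f$ across this collar by the evident product structure.

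By construction, traversing $\partial \Sigma$ in the direction corresponding to the counterclockwise traversal of the face boundary, one encounters the corners of the face in exactly the cyclic order used to define its label, and $f$ reads off each corner as a loop representing the corresponding corner label, separated by basepoint segments. Therefore $f_{*}[\partial \Sigma] = u$ in $\pi_1(Y) = A*B$, which together with the first paragraph completes the proof. The main delicate point will be orientation bookkeeping, namely verifying that the cyclic order in which $\partial \Sigma$ encounters the corners reproduces the counterclockwise order used to define the face label, rather than an inverse or a conjugate. Since $\cl$ is invariant under inversion and conjugation this ambiguity is immaterial for the conclusion, but it is the one place where consistent choices of orientations of $S$, of $\delta$, and of the loops in $Y$ representing corner labels must be made carefully.
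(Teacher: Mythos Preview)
Your proof is correct and takes a genuinely different route from the paper's. The paper argues purely combinatorially: it introduces an auxiliary graph $\Gamma'$ with midpoints on edges, defines a ``label'' for paths in $\Gamma'$, verifies that the label of a null-homotopic path is trivial when all vertices are interior, and then uses the standard $4k$-gon model for $S$ together with a projection trick to homotope the face boundary and the standard generators into $\Gamma'$; this yields an explicit equality $u = [l(a'_1), l(b'_1)] \cdots [l(a'_k), l(b'_k)]$ with the $l(a'_i), l(b'_i)$ read off from concrete graph paths. Your argument instead builds a genuine continuous map $\Sigma \to K(A,1) \vee K(B,1)$ from the diagram data---constant on edge strips, corner arcs sent to the corner labels, extended over vertex disks using the interior-vertex hypothesis---and then invokes the surface-group presentation of $\pi_1(\Sigma)$ to push the product-of-commutators relation forward. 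This is the standard ``admissible surface'' viewpoint from stable commutator length theory; it is more conceptual and avoids the auxiliary path-label machinery, at the cost of not producing the commutators as explicitly as the paper does. Both approaches ultimately encode the same idea (the interior-vertex condition is precisely what lets the face boundary bound in the right sense), and your remark that orientation/conjugation ambiguities are harmless for $\cl$ correctly disposes of the only bookkeeping issue.
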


To prove this lemma we need to define the label of a path. First, we construct an auxiliary graph~$\Gamma'$ by inserting an additional vertex of degree $2$ in the middle of each edge of the graph~$\Gamma$. Let us call these vertices auxiliary vertices and let us call a path in $\Gamma'$ whose endpoints are auxiliary vertices an auxiliary path. Labels are defined only for auxiliary paths. Let $p$ be such a path. We represent it as a composition of paths $p_1 \ldots p_m$ such that each $p_i$ is an auxiliary path traversing only one vertex of $\Gamma$. It means that each $p_i$ consists of $2$ oriented edges $(e_i^1, e_i^2)$ of $\Gamma'$ such that $e_i^1$ starts at some auxiliary vertex and ends at some vertex $v_i$ of $\Gamma$, while $e_i^2$ starts at $v_i$ and ends at some auxiliary vertex. The label $l(p)$ of the path $p$ is defined as the product $l(p_1) \ldots l(p_m)$, where the label $l(p_i)$ is the product of labels of corners at the vertex~$v_i$ taken clockwise, starting from the corner adjacent to the left side of the oriented edge $e_i^1$ and ending with the corner adjacent to the left side of the oriented edge $e_i^2$.

See Fig.~\ref{fig:path_label} for examples. The path $p_1$ traverses only one vertex of $\Gamma$ and the path $p_2$ traverses two vertices of $\Gamma$. Their labels are $l(p_1) = a_2 a_3 a_4$ and $l(p_2) = a_1 a_2 a_3 b_1$. The labels of their inverses are $l(p_1^{-1}) = a_5 a_1$ and $l(p_2^{-1}) = b_2 b_3 a_4 a_5$. Let $p_3$ be a path $e_3 e_3^{-1}$ and $p_4$ be a path $e_4 e_4^{-1}$. These paths make a U-turn at vertices of $\Gamma$ and their labels are $l(p_3) = b_3 b_1 b_2$ and $l(p_4) = b_4$.

\begin{figure}[!h]
    \centering
    \bigskip
    \resizebox{0.5\textwidth}{!}{\begin{tikzpicture}
    [xscale=3,
     yscale=3,
     inner sep=0pt,
     vertex/.style={shape=circle,fill=black,minimum size=4pt},
     small_vertex/.style={shape=circle,fill=black,minimum size=2pt},
     label/.style={shape=circle,minimum size=0pt}]

\fontsize{10}{12}\selectfont

\draw (0,0) node[vertex] {};

\draw
(0,0) -- (90:1) node[vertex] {}
(90:0.5) node[small_vertex] {}

(0,0) -- (18:1) node[vertex] {}
(18:0.5) node[small_vertex] {}

(0,0) -- (-54:1) node[vertex] {}
(-54:0.5) node[small_vertex] {}

(0,0) -- (162:1) node[vertex] {}
(162:0.5) node[small_vertex] {}

(0,0) -- (-126:1) node[vertex] {}
(-126:0.5) node[small_vertex] {}

(18:1) -- +(90:1) node[vertex] {}
(18:1) +(90:0.5) node[small_vertex] {}

(18:1) -- +(-54:1) node[vertex] {}
(18:1) +(-54:0.5) node[small_vertex] {}
;

\draw (0,0)
node[label,left=13pt,below=-1.5pt] {$a_1$}
node[label,left=9pt,below=-17pt] {$a_2$}
node[label,right=9pt,below=-17pt] {$a_3$}
node[label,right=14pt,below=-1.5pt] {$a_4$}
node[label,left=-1pt,below=8pt] {$a_5$};

\draw (18:1)
node[label,left=7pt,above=2pt] {$b_1$}
node[label,above=4pt,right=5pt] {$b_2$}
node[label,left=2pt,below=4pt] {$b_3$};

\draw (90:1)
node[label,right=1pt,above=3pt] {$b_4$};

\draw[|->, ultra thick] (-126:0.5) -- (0, 0) -- (18:1) -- +(90:0.5);
\draw[|->, ultra thick, dashed] (162:0.5) -- (0, 0) -- (-54:0.5);
\draw[|->, ultra thick, dotted] (18:1) +(-54:0.5) -- (18:1);
\draw[|->, ultra thick] (90:0.5) -- (90:1);

\draw (162:0.5) node[label,above=7pt,right=5pt] {$p_1$};
\draw (18:0.5) node[label,above=3pt] {$p_2$};
\draw (18:1) +(-54:0.2) node[label,right=6pt] {$e_3$};
\draw (90:0.75) node[label,right=4pt] {$e_4$};

\end{tikzpicture}}
    \medskip
    \caption{Labels of paths.}
    \label{fig:path_label}
\end{figure}
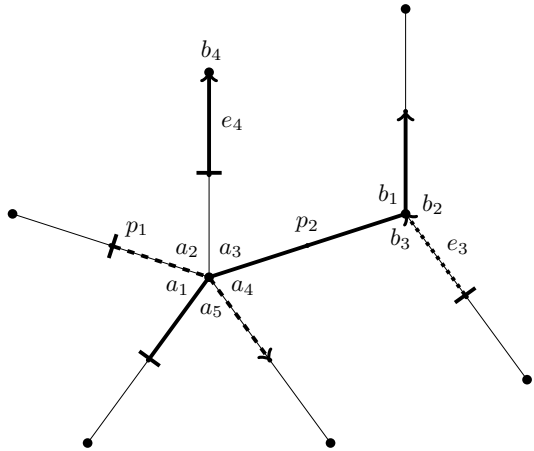

We will use the following property of path labels: if all vertices of a diagram are interior and an auxiliary path $p$ can be transformed into a trivial path by consecutive removals of subpaths of the form $e e^{-1}$, then $l(p)=1$. Indeed, we can assume that $e$ is an edge of the auxiliary graph $\Gamma'$ and then it is sufficient to consider the following two cases: 
\begin{enumerate}
    \item The edge $e$ ends at a vertex of $\Gamma$. Then the path $p$ can be represented as $p_1 e e^{-1} p_2$, where $p_1$ and $p_2$ are some auxiliary paths (possibly trivial). In that case $l(e e^{-1})$ is equal to the label of the terminal vertex of $e$. Since all the vertices of $\Gamma$ are interior, this label is equal to $1$ and thus $l(p_1 e e^{-1} p_2) = l(p_1 p_2)$.
    \item The edge $e$ starts at a vertex of $\Gamma$. Then the path $p$ can be represented as $p_1 e_1 e e^{-1} e_2 p_2$, where $p_1$ and $p_2$ are some auxiliary paths (possibly trivial), and $e_1$ and $e_2$ are some edges of $\Gamma'$. To prove that $l(p_1 e_1 e e^{-1} e_2 p_2) = l(p_1 e_1 e_2 p_2)$ it is sufficient to show that $l(e_1 e e^{-1} e_2) = l(e_1 e_2)$. Let us consider the cases:
    \begin{itemize}
        \item[-] if $e_1 \neq e^{-1}$ and $e_2 \neq e$, then depending on the relative position of $e_1, e_2$ and $e$ we have either $l(e_1 e e^{-1} e_2) = l(e_1 e_2)$ or $l(e_1 e e^{-1} e_2) = l(e_1 e_1^{-1}) l(e_1 e_2)$; in both cases it is equal to $l(e_1 e_2)$ since $l(e_1 e_1^{-1})$ is equal to the label of the terminal vertex of $e_1$ and all the vertices of $\Gamma$ are interior;
        \item[-] if $e_1 = e^{-1}$, then $l(e_1 e e^{-1} e_2) = l(e^{-1} e) l(e_1 e_2) = l(e_1 e_2)$ since $l(e^{-1} e)$ is equal to the label of the initial vertex of $e$ and all the vertices of $\Gamma$ are interior;
        \item[-] if $e_2 = e$, the argument is the same as for the previous case.
    \end{itemize}
\end{enumerate}

Let $D$ be a diagram on a surface $S$ defined by a graph $\Gamma$. In the following proof by a path we mean either a path in the auxiliary graph $\Gamma'$, consisting of edges, or a path on the surface $S$ as a continuous map from the unit interval to $S$. If $p$ is a path in $\Gamma'$, we also denote by $p$ the corresponding path on $S$ obtained by natural identification of each edge of the path with the unit interval.

\begin{proof}[Proof of Lemma~\ref{diagram_lemma}]
Let us represent the surface $S$ of the diagram $D$ as a standard $4k$-gon $P_{4k}$ with identified boundary edges. Choose an auxiliary vertex $Q$ of the auxiliary graph $\Gamma'$ such that the label of the face read starting from this vertex is $u$.
Denote the closed boundary path of the face starting at $Q$ as $p$. Note that $l(p)=u$ and $p$ is homotopic to the boundary path of $P_{4k}$ conjugated by a simple path $q$ connecting the vertex $Q$ to some vertex of $P_{4k}$. This conjugated boundary path is equal to $[a_1, b_1] \ldots [a_k, b_k]$, where $a_i$ and $b_i$ are the closed paths corresponding to the edges of $P_{4k}$ conjugated by $q$ (all the vertices of $P_{4k}$ are the same point on $S$, so all the edges of $P_{4k}$ are closed loops on $S$; we take these loops and conjugate them by $q$). 

Choose a point $C$ on the surface such that $C$ does not belong to $\Gamma$, does not belong to the boundary of $P_{4k}$ and does not belong to the path $q$. An arbitrary closed path $r$ on the surface $S \setminus C$ is homotopic to some closed path in $\Gamma'$. Indeed, consider the preimage $\mathrm{M}^{-1}(r)$ of this path on the disk $D_1$ (where $M$ and $D_1$ are taken from the definition of $D$). Centrally project $\mathrm{M}^{-1}(r)$ on the boundary $\partial D_1$ through the point $\mathrm{M}^{-1}(C)$ and take the image of this projection. Denote this new closed path as $r'$. Clearly, it lies in $\Gamma'$ and it is homotopic to $r$ on $S \setminus C$ by its construction. Finally, we turn $r'$ into a path in $\Gamma'$ by additional homotopy.

Hence the paths $a_i$ and $b_i$ are homotopic to some paths $a'_i$ and $b'_i$ in the graph $\Gamma'$ and the path $p$ is homotopic to $p' = [a'_1, b'_1] \ldots [a'_k, b'_k]$. We can assume that this homotopy lies in $\Gamma'$ because it can be projected through $\mathrm{M}^{-1}(C)$. Thus the path $p^{-1} p'$ is homotopic in the graph to the trivial path. It means that $p^{-1} p'$ can be transformed into the trivial path by consecutive removals of subpaths of the form $q q^{-1}$. Since all the vertices of the diagram are interior, $1 = l(p^{-1} p') = l(p^{-1}) l(p') = l(p)^{-1} l(p')$. Thus
$$u = l(p) = l(p') = [l(a'_1), l(b'_1)] \ldots [l(a'_k), l(b'_k)]\text{.}$$
\end{proof}

\section[]{Diagrams for $\boldsymbol{[a, t]^n}$} \label{minimal_diagrams_section}

In this section we prove the following lemma.
\begin{lemma}\label{D_n_N_lemma}
Let $a \in A$ and $t \in T$ be two elements of two groups, and let $n$ and $N$ be two natural numbers such that $n \geq N \geq 3$. If $N$ is even or $N$ and $n$ are odd, then there is a diagram $D_{n, N}$ over the free product $A * T$ on a closed oriented surface of genus $\intpart*{n \mathbin{/} 2} - \intpart*{n \mathbin{/} N} + 1$ such that $D_{n, N}$ has only one face and the label of this face is $[a, t]^n$. All the vertices of $D_{n, N}$ are interior if $a^N=1$.
\end{lemma}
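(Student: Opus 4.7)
My plan is to construct $D_{n,N}$ explicitly. I would view the closed oriented surface of genus $g = \intpart*{n/2} - \intpart*{n/N} + 1$ as a $4n$-gon with boundary labeled $[a,t]^n$, whose $a^{\pm 1}$- and $t^{\pm 1}$-sides are identified in $n$ pairs each. The diagram is then determined by two combinatorial choices: how the $2n$ $a^{\pm 1}$-corners are partitioned into $A$-vertices, and how the $2n$ $t^{\pm 1}$-corners are partitioned into $T$-vertices. To make every $T$-vertex interior for arbitrary $T$, I would require each $T$-vertex to have $t$-exponent sum exactly $0$; to make every $A$-vertex interior whenever $a^N=1$, I would require each $A$-vertex to have $a$-exponent sum divisible by $N$. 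The genus saving $-\intpart*{n/N}$ over the torsion-free Culler bound $\intpart*{n/2}+1$ comes from concentrating $N$ copies of $a^{\pm 1}$ at a single $A$-vertex of exponent $\pm N$; each such ``$N$-cluster'' collapses one would-be handle.

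For the base case $n = N$, I would use exactly two $A$-vertices, $A^{-}$ gathering all $N$ copies of $a^{-1}$ and $A^{+}$ gathering all $N$ copies of $a$ (exponents $\mp N$, divisible by $N$). When $N$ is odd, the ``antipodal'' $T$-pairing — pair $t^{-1}$ at polygon-position $4k+2$ with $t$ at $4k+2+2N$ modulo $4N$ — produces $N$ size-two $T$-vertices, and induces cyclic orderings at $A^{\pm}$ equal to the orbit of $0$ under addition of $(N+1)/2$ modulo $N$, a single $N$-cycle since $\gcd((N+1)/2, N) = 1$. When $N$ is even the straight antipodal pairing fails (it pairs $t$'s with $t$'s), so I would modify it, e.g.\ by merging a few positions into one larger $T$-vertex of exponent $0$ while keeping the remaining positions in size-two pairs. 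For general $n$, I would induct along the parity-preserving step permitted by the hypothesis: $n \to n + 2$ when $N$ is odd, and $n \to n + 1$ or $n \to n+2$ when $N$ is even. Most increments attach a standard Culler-type handle that adds one commutator to the face boundary and one unit of genus. But whenever the new value of $n$ crosses a multiple of $N$, the torsion lets us route the freshly added $a^{\pm 1}$-corners into an already existing $A$-vertex, completing another $\pm N$-cluster there and saving one handle, exactly matching the jump in $\intpart*{n/N}$.

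The main obstacle is combinatorial consistency: ensuring that the prescribed corner groupings and side identifications actually yield a \emph{single} face on a \emph{connected} oriented surface of exactly the target genus, with the cyclic orderings around $A^{-}$ and $A^{+}$ globally compatible with the chosen $T$-pairing. This is delicate already in the base case when $N$ is even, where the natural antipodal symmetry is lost; in the inductive step one must check that every attachment preserves the bipartite structure, the $N$-divisibility of $A$-vertex exponents, and the vanishing of $T$-vertex exponents, without destroying the one-face condition or miscounting the Euler characteristic. The parity hypothesis ``$N$ is even or $N$ and $n$ are odd'' is precisely what keeps these local parity constraints globally compatible; in the excluded case ($N$ odd, $n$ even) the parity of the sum of $T$-corners around the face prevents the above recipe from simultaneously satisfying the $A$-divisibility and $T$-vanishing requirements, and a different construction would be needed.
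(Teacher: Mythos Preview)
Your dual viewpoint (the single face as a $4n$-gon with sides identified in pairs) is equivalent to the paper's, and your base case for odd $N$ --- two $A$-vertices $A^{\pm}$ of degree $N$ and exponent $\pm N$, with $N$ size-two $T$-vertices --- is exactly the paper's $D_{N,N}$. So the framework is sound.

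The genuine gap is the inductive step. You propose a uniform $n\to n+2$ induction, with the crucial ``genus-neutral $+2$'' move whenever $n$ crosses a multiple of $N$: ``route the freshly added $a^{\pm1}$-corners into an already existing $A$-vertex, completing another $\pm N$-cluster.'' But in any diagram produced by your recipe so far, every $A$-vertex is already either a full $\pm N$-cluster or an exponent-zero vertex coming from a Culler-type $+2$ attachment; there is no partially-filled cluster waiting to receive two more corners. Routing the new corners to a full cluster would make its exponent $\pm(N+2)$, not divisible by $N$. So the move you describe does not exist on the diagrams your induction has built, and you give no indication of how to restructure the diagram to make it work while preserving the one-face condition. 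This is not a minor bookkeeping issue; it is the entire combinatorial content of the lemma.

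The paper avoids this by \emph{not} inducting on $n$ in steps of $2$. It defines explicit composition operations on diagrams and takes larger jumps: for odd $N$ and odd $r$, it goes from $D_{(r-2)N,N}$ to $D_{rN,N}$ in one step (adding genus $N-2$ for an increment of $2N$ in $n$), and only then fills in with $s$ ordinary $+2$ handles. The ``saving'' is thus baked into a single composite step built from three explicit pieces, with the one-face property verified directly for each operation. Cases with $N$ even are handled by separate base diagrams $D_{2,2}$ and $D_{3,2}$ and a different composition. If you want your $n\to n+2$ scheme to work, you would need to maintain an invariant such as ``there is a distinguished $A^{\pm}$-vertex of current degree $<N$ sitting on the right border,'' and design your $+2$ attachments to feed it; that is essentially rediscovering the paper's operations.

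Finally, your explanation of the parity hypothesis is incorrect. The $T$-exponent sum around the face is $0$ for every $n$, so there is no ``parity of the sum of $T$-corners'' obstruction in the excluded case $N$ odd, $n$ even. The hypothesis reflects the limits of the paper's particular inductive construction, not an intrinsic obstruction; indeed the paper handles that case afterward by splitting $[a,t]^n$ as a product of two shorter powers rather than by a single diagram.
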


\begin{proof}

We explicitly construct a desired diagram on a closed surface represented by a rectangle whose top and bottom sides subdivided into $2k$ edges. Each edge from the top side has a corresponding edge on the bottom side. If we denote the top edges as $e_1, \ldots, e_{2k}$, then the bottom edges are $e_2, e_1, \ldots, e_{2k}, e_{2k-1}$. The surface is obtained by identification of the top edges with the corresponding bottom edges. The left and right sides of the rectangle are contracted to a single point. See Fig.~\ref{fig:contour} for an example. Such a rectangle forms a closed oriented surface of genus $k$ if there are $2k$ edges on the top. We also admit a degenerate rectangle without top and bottom edges which represents a sphere.

\begin{figure}[!h]
    \centering
    \bigskip
    \resizebox{0.6\textwidth}{!}{\begin{tikzpicture}[xscale=1.5,yscale=3]
\fontsize{10}{12}\selectfont
\newcommand*{\DrN}{%
\draw[thick] (0,1) -- (2.2,1);
\draw[thick] (2.8,1) -- (5,1);
\draw[thick] (5,0) -- (2.88,0);
\draw[thick] (2.28,0) -- (0,0);
\draw[dashed, thick] (0,0) -- (0,1);
\draw[dashed, thick] (5,1) -- (5,0);
\draw (0,0) node[inner sep=0pt,shape=isosceles triangle,minimum size=3pt,fill=black] {};
\draw (0,1) node[inner sep=0pt,shape=isosceles triangle,minimum size=3pt,fill=black] {};
\draw (1,0) node[inner sep=0pt,shape=isosceles triangle,minimum size=3pt,fill=black] {};
\draw (1,1) node[inner sep=0pt,shape=isosceles triangle,minimum size=3pt,fill=black] {};
\draw (2,0) node[inner sep=0pt,shape=isosceles triangle,minimum size=3pt,fill=black] {};
\draw (2,1) node[inner sep=0pt,shape=isosceles triangle,minimum size=3pt,fill=black] {};
\draw (3,0) node[inner sep=0pt,shape=isosceles triangle,minimum size=3pt,fill=black] {};
\draw (3,1) node[inner sep=0pt,shape=isosceles triangle,minimum size=3pt,fill=black] {};
\draw (4,0) node[inner sep=0pt,shape=isosceles triangle,minimum size=3pt,fill=black] {};
\draw (4,1) node[inner sep=0pt,shape=isosceles triangle,minimum size=3pt,fill=black] {};
\draw (5,0) node[inner sep=0pt,shape=isosceles triangle,minimum size=3pt,fill=black] {};
\draw (5,1) node[inner sep=0pt,shape=isosceles triangle,minimum size=3pt,fill=black] {};
\draw (0.5,0) node[minimum size=14pt,below=0pt] {$e_{2}$};
\draw (0.5,1) node[minimum size=14pt,above=0pt] {$e_{1}$};
\draw (1.5,0) node[minimum size=14pt,below=0pt] {$e_{1}$};
\draw (1.5,1) node[minimum size=14pt,above=0pt] {$e_{2}$};
\draw (3.5,0) node[minimum size=14pt,below=0pt] {$e_{2k}$};
\draw (3.5,1) node[minimum size=14pt,above=0pt] {$e_{2k-1}$};
\draw (4.5,0) node[minimum size=14pt,below=0pt] {$e_{2k-1}$};
\draw (4.5,1) node[minimum size=14pt,above=0pt] {$e_{2k}$};
}


\begin{scope}[shift={(0,0)}]
    \DrN
\end{scope}

\begin{scope}[shift={(0.04,0)}]
\draw[thick]
(2.25, 1.1) 
.. controls (2.1, 0.75) ..
(2.25, 0.5)
.. controls (2.4, 0.25) ..
(2.25, -0.1);
\draw[thick]
(2.75, 1.1) 
.. controls (2.6, 0.75) ..
(2.75, 0.5)
.. controls (2.9, 0.25) ..
(2.75, -0.1);
\end{scope}

\end{tikzpicture}}
    \caption{A closed oriented surface of genus $k$.}
    \label{fig:contour}
\end{figure}
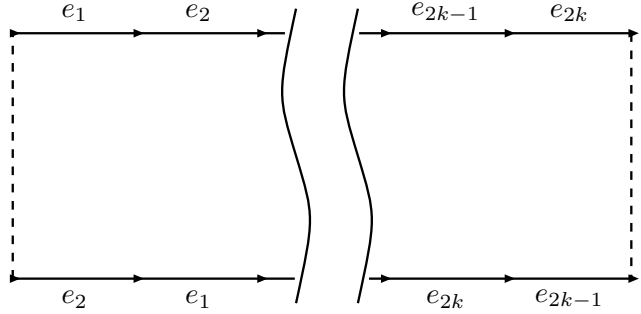

The diagram is defined by its graph $\Gamma$ drawn on this rectangle. All corners at $A$-vertices will have label $a$ or $a^{-1}$. All corners at $T$-vertices will have label $t$ or $t^{-1}$. Some $A$-vertices will be labeled by $A^+$ or $A^-$. All corners at such vertices are labeled by $a$ or $a^{-1}$ respectively.

Let us represent $n$ as $n=rN+q$, where $0 \leq q < N$. The diagram is constructed depending on the parities of $N$, $r$ and $q$. We consider the following cases (where $s = \intpart*{q \mathbin{/} 2}$):
\begin{enumerate}
    \item $n=rN+2s$, where $N$ and $r$ are odd.
    \item $n=rN+2s+1$, where $N$ is odd and $r$ is even.
    \item $n=rN+2s$, where $N$ is even.
    \item $n=rN+2s+1$, where $N$ is even.
\end{enumerate}

\vspace{0.2cm}
\noindent
\textbf{Case 1. $\boldsymbol{n=rN+2s}$, where $\boldsymbol{N}$ and $\boldsymbol{r}$ are odd}
\vspace{0.05cm}

\noindent
First, we construct a diagram $D_{N, N}$ for each odd $N$. The diagrams $D_{1, 1}$ and $D_{3, 3}$ are shown in Fig.~\ref{fig:D_1_1_and_D_3_3_and_D_5_5}. Clearly, the diagram $D_{1, 1}$ has genus $0$ and only one face. The label of this face is $[a, t]$. All the vertices of $D_{1, 1}$ are interior if $a=1$. The diagram $D_{3, 3}$ has genus $1$ and only one face. The label of this face is $[a, t]^3$. All the vertices of $D_{3, 3}$ are interior if $a^3=1$.

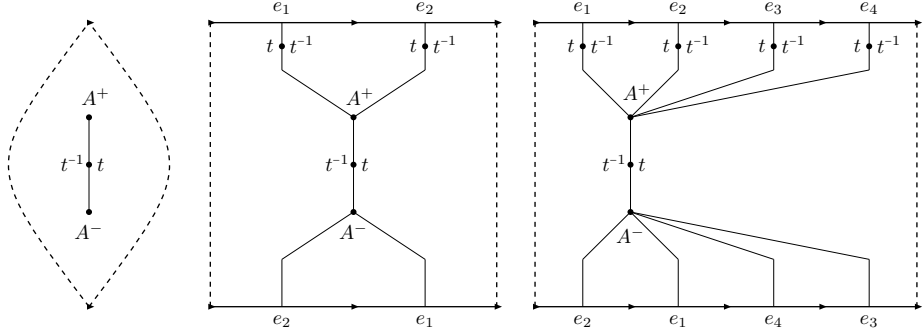
\begin{figure}[!h]
    \centering
    \bigskip
    \resizebox{0.9\textwidth}{!}{
        \begin{tikzpicture}[xscale=3.0,yscale=6]
\providecommand{\unaryminus}{\scalebox{0.5}[0.75]{\( - \)}}
\fontsize{10}{12}\selectfont
\newcommand*{\DrN}{%
\draw[dashed, thick] (1,0) .. controls(0.25, 0.5) .. (1,1);
\draw[dashed, thick] (1,0) .. controls(1.75, 0.5) .. (1,1);
\draw (1,0) node[scale=1.2000000000000002,inner sep=0pt,shape=isosceles triangle,minimum size=3pt,fill=black] {};
\draw (1,1) node[scale=1.2000000000000002,inner sep=0pt,shape=isosceles triangle,minimum size=3pt,fill=black] {};
\draw (1,0.3333333333333333) node[scale=1.2000000000000002,inner sep=0pt,shape=circle,minimum size=3pt,fill=black] {};
\draw (1,0.3333333333333333) node[scale=1.2000000000000002,inner sep=0pt,below=5.3665631459994945pt] {$A^{-}$};
\draw (1,0.6666666666666666) node[scale=1.2000000000000002,inner sep=0pt,shape=circle,minimum size=3pt,fill=black] {};
\draw (1,0.6666666666666666) node[scale=1.2000000000000002,inner sep=0pt,above=10.366563145999494pt,right=-4pt] {$A^{+}$};
    (1.5,0.8333333333333334);
\draw[] (1,0.3333333333333333) -- (1,0.6666666666666666);
\draw (1,0.5) node[scale=1.2000000000000002,inner sep=0pt,shape=circle,minimum size=3pt,fill=black] {};
\draw (1,0.5) node[scale=1.2000000000000002,inner sep=0pt,right=3.8pt] {$t$};
\draw (1,0.5) node[scale=1.2000000000000002,inner sep=0pt,left=8pt,above=-3.3pt] {$t^{\unaryminus 1}$};
\draw (1,0) node[scale=1.2000000000000002,inner sep=0pt,minimum size=14pt,below=0pt] {};
\draw (1,1) node[scale=1.2000000000000002,inner sep=0pt,minimum size=14pt,above=0pt] {};
}

\begin{scope}[shift={(0,0)}]
    \DrN
\end{scope}

\end{tikzpicture}
        \begin{tikzpicture}[xscale=3.0,yscale=6]
\providecommand{\unaryminus}{\scalebox{0.5}[0.75]{\( - \)}}
\fontsize{10}{12}\selectfont
\newcommand*{\DrN}{%
\draw[thick] (0,1) -- (2,1);
\draw[thick] (2,0) -- (0,0);
\draw[dashed, thick] (0,0) -- (0,1);
\draw[dashed, thick] (2,1) -- (2,0);
\draw (0,0) node[scale=1.2000000000000002,inner sep=0pt,shape=isosceles triangle,minimum size=3pt,fill=black] {};
\draw (0,1) node[scale=1.2000000000000002,inner sep=0pt,shape=isosceles triangle,minimum size=3pt,fill=black] {};
\draw (1,0) node[scale=1.2000000000000002,inner sep=0pt,shape=isosceles triangle,minimum size=3pt,fill=black] {};
\draw (1,1) node[scale=1.2000000000000002,inner sep=0pt,shape=isosceles triangle,minimum size=3pt,fill=black] {};
\draw (2,0) node[scale=1.2000000000000002,inner sep=0pt,shape=isosceles triangle,minimum size=3pt,fill=black] {};
\draw (2,1) node[scale=1.2000000000000002,inner sep=0pt,shape=isosceles triangle,minimum size=3pt,fill=black] {};
\draw (0.5,0) node[scale=1.2000000000000002,inner sep=0pt,minimum size=14pt,below=0pt] {$e_{2}$};
\draw (0.5,1) node[scale=1.2000000000000002,inner sep=0pt,minimum size=14pt,above=0pt] {$e_{1}$};
\draw (1.5,0) node[scale=1.2000000000000002,inner sep=0pt,minimum size=14pt,below=0pt] {$e_{1}$};
\draw (1.5,1) node[scale=1.2000000000000002,inner sep=0pt,minimum size=14pt,above=0pt] {$e_{2}$};
\draw (1,0.3333333333333333) node[scale=1.2000000000000002,inner sep=0pt,shape=circle,minimum size=3pt,fill=black] {};
\draw (1,0.3333333333333333) node[scale=1.2000000000000002,inner sep=0pt,below=5.3665631459994945pt] {$A^{-}$};
\draw (1,0.6666666666666666) node[scale=1.2000000000000002,inner sep=0pt,shape=circle,minimum size=3pt,fill=black] {};
\draw (1,0.6666666666666666) node[scale=1.2000000000000002,inner sep=0pt,above=10.366563145999494pt,right=-4pt] {$A^{+}$};
\draw[] (1,0.3333333333333333) -- (0.5,0.16666666666666666);
\draw[] (0.5,0.16666666666666666) -- (0.5,0);
\draw[] (1,0.6666666666666666) -- (0.5,0.8333333333333334);
\draw[] (0.5,0.8333333333333334) -- (0.5,1);
\draw (0.5,0.9166666666666666) node[scale=1.2000000000000002,inner sep=0pt,shape=circle,minimum size=3pt,fill=black] {};
\draw (0.5,0.9166666666666666) node[scale=1.2000000000000002,inner sep=0pt,left=3.8pt] {$t$};
\draw (0.5,0.9166666666666666) node[scale=1.2000000000000002,inner sep=0pt,right=10pt,above=-3.3pt] {$t^{\unaryminus 1}$};
\draw[] (1,0.3333333333333333) -- (1.5,0.16666666666666666);
\draw[] (1.5,0.16666666666666666) -- (1.5,0);
\draw[] (1,0.6666666666666666) -- (1.5,0.8333333333333334);
\draw[] (1.5,0.8333333333333334) -- (1.5,1);
\draw (1.5,0.9166666666666666) node[scale=1.2000000000000002,inner sep=0pt,shape=circle,minimum size=3pt,fill=black] {};
\draw (1.5,0.9166666666666666) node[scale=1.2000000000000002,inner sep=0pt,left=3.8pt] {$t$};
\draw (1.5,0.9166666666666666) node[scale=1.2000000000000002,inner sep=0pt,right=10pt,above=-3.3pt] {$t^{\unaryminus 1}$};
\draw[] (1,0.3333333333333333) -- (1,0.6666666666666666);
\draw (1,0.5) node[scale=1.2000000000000002,inner sep=0pt,shape=circle,minimum size=3pt,fill=black] {};
\draw (1,0.5) node[scale=1.2000000000000002,inner sep=0pt,right=3.8pt] {$t$};
\draw (1,0.5) node[scale=1.2000000000000002,inner sep=0pt,left=8pt,above=-3.3pt] {$t^{\unaryminus 1}$};
}


\begin{scope}[shift={(0,0)}]
    \DrN
\end{scope}

\end{tikzpicture}
        \hspace{0.3cm}
        \begin{tikzpicture}[xscale=2.0,yscale=6]
\providecommand{\unaryminus}{\scalebox{0.5}[0.75]{\( - \)}}
\fontsize{10}{12}\selectfont
\newcommand*{\DrN}{%
\draw[thick] (0,1) -- (4,1);
\draw[thick] (4,0) -- (0,0);
\draw[dashed, thick] (0,0) -- (0,1);
\draw[dashed, thick] (4,1) -- (4,0);
\draw (0,0) node[scale=1.2000000000000002,inner sep=0pt,shape=isosceles triangle,minimum size=3pt,fill=black] {};
\draw (0,1) node[scale=1.2000000000000002,inner sep=0pt,shape=isosceles triangle,minimum size=3pt,fill=black] {};
\draw (1,0) node[scale=1.2000000000000002,inner sep=0pt,shape=isosceles triangle,minimum size=3pt,fill=black] {};
\draw (1,1) node[scale=1.2000000000000002,inner sep=0pt,shape=isosceles triangle,minimum size=3pt,fill=black] {};
\draw (2,0) node[scale=1.2000000000000002,inner sep=0pt,shape=isosceles triangle,minimum size=3pt,fill=black] {};
\draw (2,1) node[scale=1.2000000000000002,inner sep=0pt,shape=isosceles triangle,minimum size=3pt,fill=black] {};
\draw (3,0) node[scale=1.2000000000000002,inner sep=0pt,shape=isosceles triangle,minimum size=3pt,fill=black] {};
\draw (3,1) node[scale=1.2000000000000002,inner sep=0pt,shape=isosceles triangle,minimum size=3pt,fill=black] {};
\draw (4,0) node[scale=1.2000000000000002,inner sep=0pt,shape=isosceles triangle,minimum size=3pt,fill=black] {};
\draw (4,1) node[scale=1.2000000000000002,inner sep=0pt,shape=isosceles triangle,minimum size=3pt,fill=black] {};
\draw (0.5,0) node[scale=1.2000000000000002,inner sep=0pt,minimum size=14pt,below=0pt] {$e_{2}$};
\draw (0.5,1) node[scale=1.2000000000000002,inner sep=0pt,minimum size=14pt,above=0pt] {$e_{1}$};
\draw (1.5,0) node[scale=1.2000000000000002,inner sep=0pt,minimum size=14pt,below=0pt] {$e_{1}$};
\draw (1.5,1) node[scale=1.2000000000000002,inner sep=0pt,minimum size=14pt,above=0pt] {$e_{2}$};
\draw (2.5,0) node[scale=1.2000000000000002,inner sep=0pt,minimum size=14pt,below=0pt] {$e_{4}$};
\draw (2.5,1) node[scale=1.2000000000000002,inner sep=0pt,minimum size=14pt,above=0pt] {$e_{3}$};
\draw (3.5,0) node[scale=1.2000000000000002,inner sep=0pt,minimum size=14pt,below=0pt] {$e_{3}$};
\draw (3.5,1) node[scale=1.2000000000000002,inner sep=0pt,minimum size=14pt,above=0pt] {$e_{4}$};
\draw (1,0.3333333333333333) node[scale=1.2000000000000002,inner sep=0pt,shape=circle,minimum size=3pt,fill=black] {};
\draw (1,0.3333333333333333) node[scale=1.2000000000000002,inner sep=0pt,below=7.589466384404111pt] {$A^{-}$};
\draw (1,0.6666666666666666) node[scale=1.2000000000000002,inner sep=0pt,shape=circle,minimum size=3pt,fill=black] {};
\draw (1,0.6666666666666666) node[scale=1.2000000000000002,inner sep=0pt,above=12.589466384404112pt,right=-4pt] {$A^{+}$};
\draw[] (1,0.3333333333333333) -- (0.5,0.16666666666666666);
\draw[] (0.5,0.16666666666666666) -- (0.5,0);
\draw[] (1,0.6666666666666666) -- (0.5,0.8333333333333334);
\draw[] (0.5,0.8333333333333334) -- (0.5,1);
\draw (0.5,0.9166666666666666) node[scale=1.2000000000000002,inner sep=0pt,shape=circle,minimum size=3pt,fill=black] {};
\draw (0.5,0.9166666666666666) node[scale=1.2000000000000002,inner sep=0pt,left=3.8pt] {$t$};
\draw (0.5,0.9166666666666666) node[scale=1.2000000000000002,inner sep=0pt,right=10pt,above=-3.3pt] {$t^{\unaryminus 1}$};
\draw[] (1,0.3333333333333333) -- (1.5,0.16666666666666666);
\draw[] (1.5,0.16666666666666666) -- (1.5,0);
\draw[] (1,0.6666666666666666) -- (1.5,0.8333333333333334);
\draw[] (1.5,0.8333333333333334) -- (1.5,1);
\draw (1.5,0.9166666666666666) node[scale=1.2000000000000002,inner sep=0pt,shape=circle,minimum size=3pt,fill=black] {};
\draw (1.5,0.9166666666666666) node[scale=1.2000000000000002,inner sep=0pt,left=3.8pt] {$t$};
\draw (1.5,0.9166666666666666) node[scale=1.2000000000000002,inner sep=0pt,right=10pt,above=-3.3pt] {$t^{\unaryminus 1}$};
\draw[] (1,0.3333333333333333) -- (2.5,0.16666666666666666);
\draw[] (2.5,0.16666666666666666) -- (2.5,0);
\draw[] (1,0.6666666666666666) -- (2.5,0.8333333333333334);
\draw[] (2.5,0.8333333333333334) -- (2.5,1);
\draw (2.5,0.9166666666666666) node[scale=1.2000000000000002,inner sep=0pt,shape=circle,minimum size=3pt,fill=black] {};
\draw (2.5,0.9166666666666666) node[scale=1.2000000000000002,inner sep=0pt,left=3.8pt] {$t$};
\draw (2.5,0.9166666666666666) node[scale=1.2000000000000002,inner sep=0pt,right=10pt,above=-3.3pt] {$t^{\unaryminus 1}$};
\draw[] (1,0.3333333333333333) -- (3.5,0.16666666666666666);
\draw[] (3.5,0.16666666666666666) -- (3.5,0);
\draw[] (1,0.6666666666666666) -- (3.5,0.8333333333333334);
\draw[] (3.5,0.8333333333333334) -- (3.5,1);
\draw (3.5,0.9166666666666666) node[scale=1.2000000000000002,inner sep=0pt,shape=circle,minimum size=3pt,fill=black] {};
\draw (3.5,0.9166666666666666) node[scale=1.2000000000000002,inner sep=0pt,left=3.8pt] {$t$};
\draw (3.5,0.9166666666666666) node[scale=1.2000000000000002,inner sep=0pt,right=10pt,above=-3.3pt] {$t^{\unaryminus 1}$};
\draw[] (1,0.3333333333333333) -- (1,0.6666666666666666);
\draw (1,0.5) node[scale=1.2000000000000002,inner sep=0pt,shape=circle,minimum size=3pt,fill=black] {};
\draw (1,0.5) node[scale=1.2000000000000002,inner sep=0pt,right=3.8pt] {$t$};
\draw (1,0.5) node[scale=1.2000000000000002,inner sep=0pt,left=8pt,above=-3.3pt] {$t^{\unaryminus 1}$};
}


\begin{scope}[shift={(0,0)}]
    \DrN
\end{scope}

\end{tikzpicture}
    }
    \caption{$D_{1,1}$, $D_{3,3}$ and $D_{5,5}=D_{3, 3} \dsumI D_{3, 3}$.}
    \label{fig:D_1_1_and_D_3_3_and_D_5_5}
\end{figure}

To get the diagrams for other $N$ we define a composition of two diagrams.

\begin{definition} \label{def:borders}
Let $D$ be a diagram drawn on a rectangle with $2k$ edges $e_i$, where $k > 0$. We call the left border of $D$ a path going along the graph $\Gamma$ from the leftmost intersection point of $\Gamma$ with the bottom side of the rectangle to the leftmost intersection point of $\Gamma$ with the top side of the rectangle. We call the right border of $D$ a path going along the graph $\Gamma$ from the rightmost intersection point of $\Gamma$ with the top side of the rectangle to the rightmost intersection point of $\Gamma$ with the bottom side of the rectangle.
\end{definition}

Note that the left and right borders are oriented. The left border is traversed from bottom to top and the right border is traversed from top to bottom.

\begin{definition}
Let $D$ be a diagram drawn on a rectangle. Let $(v^{+}, v^{-})$ be a pair of an $A^{+}$-vertex and an $A^{-}$-vertex, both lying on the left border of $D$ or both lying on the right border of $D$. We call such a pair positively oriented if moving along the respective border of $D$ we first meet $v^{+}$. Otherwise we call such a pair negatively oriented. 
\end{definition}

\begin{definition}
Let $D_1$ and $D_2$ be two diagrams drawn on rectangles $R_1$ and $R_2$. We denote by $D_1 + D_2$ a new diagram obtained as follows: remove the right side of $R_1$, remove the left side of $R_2$ and attach the right side of $R_1$ to the left side of $R_2$.
\end{definition}

\begin{remark}
Note that $D_1 + D_2$ is not actually a correct diagram since it has non-simply connected domains. Nevertheless, we call it a diagram for simplicity of notation.
\end{remark}

\begin{definition}
Let $D_1$ and $D_2$ be two diagrams drawn on rectangles. Suppose that there is an $A^{+}$-vertex $v^{+}_1$ and an $A^{-}$-vertex $v^{-}_1$ both lying on the right border of $D_1$ and there is an $A^{+}$-vertex $v^{+}_2$ and an $A^{-}$-vertex $v^{-}_2$ both lying on the left border of $D_2$, such that $v^{+}_2$ and $v^{-}_2$ are connected by a $t$-arc. Suppose also that the pair $(v^{+}_1, v^{-}_1)$ and the pair $(v^{+}_2, v^{-}_2)$ have different orientations. We denote by $D_1 \dsumI D_2$ a diagram obtained from $D_1 + D_2$ as follows: remove the $t$-arc connecting $v^{+}_2$ and $v^{-}_2$, glue $v^{+}_2$ to $v^{+}_1$ and glue $v^{-}_2$ to $v^{-}_1$.
\end{definition}

For odd $N \geq 5$ we define 
$$
D_{N, N} = \underbrace{D_{3, 3} \dsumI \ldots \dsumI D_{3, 3}}_{(N-1)\mathbin{/}2 \text{ summands}}\text{.}
$$
See Fig.~\ref{fig:D_1_1_and_D_3_3_and_D_5_5} for an example. Clearly, if $D_1$ and $D_2$ are two diagrams with one face, then $D_1 \dsumI D_2$ is also a diagram with one face. If the labels of their faces are $[a, t]^{n_1}$ and $[a, t]^{n_2}$ respectively, then the label of the face of $D_1 \dsumI D_2$ is $[a, t]^{n_1+n_2-1}$. If $D_1$ has genus $k_1$ and $D_2$ has genus $k_2$, then $D_1 \dsumI D_2$ has genus $k_1 + k_2$.

Since $D_{3, 3}$ has only one face, it follows from the above representation that $D_{N, N}$ has only one face and its label is $[a, t]^N$. The genus is equal to $(N - 1) \mathbin{/} 2$. There is one $A^+$-vertex of degree $N$ and one $A^-$-vertex of degree $N$. Thus all the vertices of $D_{N, N}$ are interior if $a^N=1$.

\begin{definition}
Let $D$ be a diagram drawn on a rectangle. We denote by $D^{-}$ the same diagram where labels of all corners are inverted (in particular, $A^+$-vertices turn into $A^-$-vertices and vice versa).
\end{definition}

\begin{definition}\label{def:borders_D_1_1}
We define the left border of $D_{1, 1}^-$ as a path going from the $A^{+}$-vertex to the $A^{-}$-vertex and the right border of $D_{1, 1}^-$ as a path going from the $A^{-}$-vertex to the $A^{+}$-vertex.
\end{definition}

Note that we need a separate definition for the left and right border of $D_{1, 1}^-$ since Definition~\ref{def:borders} works only for diagrams with positive genus.

\begin{definition}
Let $D_1$ and $D_2$ be two diagrams drawn on rectangles. Suppose that there is a $t$-arc $f_1$ in $D_1$ connecting an $A^{+}$-vertex $v^{+}_1$ with an $A^{-}$-vertex $v^{-}_1$ and there is a $t$-arc $f_2$ in $D_2$ connecting an $A^{+}$-vertex $v^{+}_2$ with an $A^{-}$-vertex $v^{-}_2$. Suppose also that the interior of $f_1$ has a nonempty intersection with the right border of $D_1$ and the interior of $f_2$ has a nonempty intersection with the left border of $D_2$. Orient $f_1$ and $f_2$ such that they start at $A^{+}$-vertices. Suppose that these orientations are both the same as (or both different from) the orientation of the respective border. Take the diagram $D_1 + D_2$. Add a new $t$-arc connecting $v_1^{+}$ with $v_2^{-}$ and add a new $t$-arc connecting  $v_1^{-}$ with $v_2^{+}$. The diagram obtained after the removal of the $t$-arc connecting $v_1^{+}$ with $v_1^{-}$ is denoted by $D_1 \dsumIIr D_2$. The diagram obtained after the removal of the $t$-arc connecting $v_2^{+}$ with $v_2^{-}$ is denoted by $D_1 \dsumIIl D_2$.
\end{definition}

See Fig.~\ref{fig:sumII_example} for an example. Clearly, if $D_1$ and $D_2$ are two diagrams with one face, then $D_1 \dsumIIr D_2$ ($D_1 \dsumIIl D_2$) is also a diagram with one face. If the labels of their faces are $[a, t]^{n_1}$ and $[a, t]^{n_2}$, then the label of the face of $D_1 \dsumIIr D_2$ ($D_1 \dsumIIl D_2$) is $[a, t]^{n_1+n_2+1}$. If $D_1$ has genus $k_1$ and $D_2$ has genus $k_2$, then $D_1 \dsumIIr D_2$ ($D_1 \dsumIIl D_2$) has genus $k_1 + k_2$.

\begin{figure}[!h]
    \centering
    \bigskip
    \resizebox{0.9\textwidth}{!}{\begin{tikzpicture}[xscale=2.0,yscale=6]
\providecommand{\unaryminus}{\scalebox{0.5}[0.75]{\( - \)}}
\fontsize{10}{12}\selectfont

\newcommand*{\DrN}{%
\draw[thick] (0,1) -- (2,1);
\draw[thick] (2,0) -- (0,0);
\draw (0,0) node[scale=1.2000000000000002,inner sep=0pt,shape=isosceles triangle,minimum size=3pt,fill=black] {};
\draw (0,1) node[scale=1.2000000000000002,inner sep=0pt,shape=isosceles triangle,minimum size=3pt,fill=black] {};
\draw (1,0) node[scale=1.2000000000000002,inner sep=0pt,shape=isosceles triangle,minimum size=3pt,fill=black] {};
\draw (1,1) node[scale=1.2000000000000002,inner sep=0pt,shape=isosceles triangle,minimum size=3pt,fill=black] {};
\draw (2,0) node[scale=1.2000000000000002,inner sep=0pt,shape=isosceles triangle,minimum size=3pt,fill=black] {};
\draw (2,1) node[scale=1.2000000000000002,inner sep=0pt,shape=isosceles triangle,minimum size=3pt,fill=black] {};
\draw (1,0.3333333333333333) node[scale=1.2000000000000002,inner sep=0pt,shape=circle,minimum size=3pt,fill=black] {};
\draw (1,0.6666666666666666) node[scale=1.2000000000000002,inner sep=0pt,shape=circle,minimum size=3pt,fill=black] {};
\draw[] (1,0.3333333333333333) -- (0.5,0.16666666666666666);
\draw[] (0.5,0.16666666666666666) -- (0.5,0);
\draw[] (1,0.6666666666666666) -- (0.5,0.8333333333333334);
\draw[] (0.5,0.8333333333333334) -- (0.5,1);

\draw[] (1,0.3333333333333333) -- (1.5,0.16666666666666666);
\draw[] (1.5,0.16666666666666666) -- (1.5,0);
\draw[] (1,0.6666666666666666) -- (1.5,0.8333333333333334);
\draw[] (1.5,0.8333333333333334) -- (1.5,1);

\draw (0.5,0.9166666666666666) node[scale=1.2000000000000002,inner sep=0pt,shape=circle,minimum size=3pt,fill=black] {};
\draw (1.5,0.9166666666666666) node[scale=1.2000000000000002,inner sep=0pt,shape=circle,minimum size=3pt,fill=black] {};

}

\begin{scope}[shift={(0,0)}]
    \DrN
    \draw[dashed, thick] (0,0) -- (0,1);
    \draw[dashed, thick] (2,1) -- (2,0);
    \draw (0.5,0) node[scale=1.2000000000000002,inner sep=0pt,minimum size=14pt,below=0pt] {$e_{2}$};
    \draw (0.5,1) node[scale=1.2000000000000002,inner sep=0pt,minimum size=14pt,above=0pt] {$e_{1}$};
    \draw (1.5,0) node[scale=1.2000000000000002,inner sep=0pt,minimum size=14pt,below=0pt] {$e_{1}$};
    \draw (1.5,1) node[scale=1.2000000000000002,inner sep=0pt,minimum size=14pt,above=0pt] {$e_{2}$};
    \draw[] (1,0.3333333333333333) -- (1,0.6666666666666666);
    \draw (1,0.5) node[scale=1.2000000000000002,inner sep=0pt,shape=circle,minimum size=3pt,fill=black] {};
    \draw (1, 0.5) node[scale=1.2000000000000002,inner sep=0pt,left=3.8pt] {$t$};
    \draw (1, 0.5) node[scale=1.2000000000000002,inner sep=0pt,right=10pt,above=-3.3pt] {$t^{\unaryminus 1}$};
    \draw (1,0.3333333333333333) node[scale=1.2000000000000002,inner sep=0pt,below=5.3665631459994945pt] {$A^{+}$};
    \draw (1,0.6666666666666666) node[scale=1.2000000000000002,inner sep=0pt,above=10.366563145999494pt,right=-4pt] {$A^{-}$};
    
    \draw (0.5,0.9166666666666666) node[scale=1.2000000000000002,inner sep=0pt,right=3.8pt] {$t$};
    \draw (1.5,0.9166666666666666) node[scale=1.2000000000000002,inner sep=0pt,right=3.8pt] {$t$};
    \draw (0.5,0.9166666666666666) node[scale=1.2000000000000002,inner sep=0pt,left=8pt,above=-3.3pt] {$t^{\unaryminus 1}$};
    \draw (1.5,0.9166666666666666) node[scale=1.2000000000000002,inner sep=0pt,left=8pt,above=-3.3pt] {$t^{\unaryminus 1}$};
\end{scope}

\begin{scope}[shift={(3,0)}]
    \DrN
    \draw[dashed, thick] (0,0) -- (0,1);
    \draw[dashed, thick] (2,1) -- (2,0);
    \draw (0.5,0) node[scale=1.2000000000000002,inner sep=0pt,minimum size=14pt,below=0pt] {$e_{2}$};
    \draw (0.5,1) node[scale=1.2000000000000002,inner sep=0pt,minimum size=14pt,above=0pt] {$e_{1}$};
    \draw (1.5,0) node[scale=1.2000000000000002,inner sep=0pt,minimum size=14pt,below=0pt] {$e_{1}$};
    \draw (1.5,1) node[scale=1.2000000000000002,inner sep=0pt,minimum size=14pt,above=0pt] {$e_{2}$};
    \draw[] (1,0.3333333333333333) -- (1,0.6666666666666666);
    \draw (1,0.5) node[scale=1.2000000000000002,inner sep=0pt,shape=circle,minimum size=3pt,fill=black] {};
    \draw (1,0.5) node[scale=1.2000000000000002,inner sep=0pt,right=3.8pt] {$t$};
    \draw (1,0.5) node[scale=1.2000000000000002,inner sep=0pt,left=8pt,above=-3.3pt] {$t^{\unaryminus 1}$};
    \draw (1,0.3333333333333333) node[scale=1.2000000000000002,inner sep=0pt,below=5.3665631459994945pt] {$A^{-}$};
    \draw (1,0.6666666666666666) node[scale=1.2000000000000002,inner sep=0pt,above=10.366563145999494pt,right=-4pt] {$A^{+}$};
    \draw (0.5,0.9166666666666666) node[scale=1.2000000000000002,inner sep=0pt,left=3.8pt] {$t$};
    \draw (0.5,0.9166666666666666) node[scale=1.2000000000000002,inner sep=0pt,right=10pt,above=-3.3pt] {$t^{\unaryminus 1}$};
    \draw (1.5,0.9166666666666666) node[scale=1.2000000000000002,inner sep=0pt,left=3.8pt] {$t$};
    \draw (1.5,0.9166666666666666) node[scale=1.2000000000000002,inner sep=0pt,right=10pt,above=-3.3pt] {$t^{\unaryminus 1}$};
\end{scope}

\begin{scope}[shift={(6,0)}]
    \begin{scope}[shift={(0,0)}]
        \DrN
        \draw (0.5,0) node[scale=1.2000000000000002,inner sep=0pt,minimum size=14pt,below=0pt] {$e_{2}$};
        \draw (0.5,1) node[scale=1.2000000000000002,inner sep=0pt,minimum size=14pt,above=0pt] {$e_{1}$};
        \draw (1.5,0) node[scale=1.2000000000000002,inner sep=0pt,minimum size=14pt,below=0pt] {$e_{1}$};
        \draw (1.5,1) node[scale=1.2000000000000002,inner sep=0pt,minimum size=14pt,above=0pt] {$e_{2}$};
        \draw (1,0.3333333333333333) node[scale=1.2000000000000002,inner sep=0pt,below=5.3665631459994945pt] {$A^{+}$};
        \draw (1,0.6666666666666666) node[scale=1.2000000000000002,inner sep=0pt,above=10.366563145999494pt,right=-4pt] {$A^{-}$};
        \draw (0.5,0.9166666666666666) node[scale=1.2000000000000002,inner sep=0pt,right=3.8pt] {$t$};
        \draw (1.5,0.9166666666666666) node[scale=1.2000000000000002,inner sep=0pt,right=3.8pt] {$t$};
        \draw (0.5,0.9166666666666666) node[scale=1.2000000000000002,inner sep=0pt,left=8pt,above=-3.3pt] {$t^{\unaryminus 1}$};
        \draw (1.5,0.9166666666666666) node[scale=1.2000000000000002,inner sep=0pt,left=8pt,above=-3.3pt] {$t^{\unaryminus 1}$};
    \end{scope}
    
    \begin{scope}[shift={(2,0)}]
        \DrN
        \draw (0.5,0) node[scale=1.2000000000000002,inner sep=0pt,minimum size=14pt,below=0pt] {$e_{4}$};
        \draw (0.5,1) node[scale=1.2000000000000002,inner sep=0pt,minimum size=14pt,above=0pt] {$e_{3}$};
        \draw (1.5,0) node[scale=1.2000000000000002,inner sep=0pt,minimum size=14pt,below=0pt] {$e_{3}$};
        \draw (1.5,1) node[scale=1.2000000000000002,inner sep=0pt,minimum size=14pt,above=0pt] {$e_{4}$};
        \draw[] (1,0.3333333333333333) -- (1,0.6666666666666666);
        \draw (1,0.5) node[scale=1.2000000000000002,inner sep=0pt,shape=circle,minimum size=3pt,fill=black] {};
        \draw (1,0.5) node[scale=1.2000000000000002,inner sep=0pt,right=3.8pt] {$t$};
        \draw (1,0.5) node[scale=1.2000000000000002,inner sep=0pt,left=8pt,above=-3.3pt] {$t^{\unaryminus 1}$};
        \draw (1,0.3333333333333333) node[scale=1.2000000000000002,inner sep=0pt,below=5.3665631459994945pt] {$A^{-}$};
        \draw (1,0.6666666666666666) node[scale=1.2000000000000002,inner sep=0pt,above=10.366563145999494pt,right=-4pt] {$A^{+}$};
        \draw (0.5,0.9166666666666666) node[scale=1.2000000000000002,inner sep=0pt,left=3.8pt] {$t$};
        \draw (0.5,0.9166666666666666) node[scale=1.2000000000000002,inner sep=0pt,right=10pt,above=-3.3pt] {$t^{\unaryminus 1}$};
        \draw (1.5,0.9166666666666666) node[scale=1.2000000000000002,inner sep=0pt,left=3.8pt] {$t$};
        \draw (1.5,0.9166666666666666) node[scale=1.2000000000000002,inner sep=0pt,right=10pt,above=-3.3pt] {$t^{\unaryminus 1}$};
    \end{scope}
    \draw[dashed, thick] (0,0) -- (0,1);
    \draw[dashed, thick] (4,1) -- (4,0);
    \draw[] (1,0.3333333333333333) -- (3,0.3333333333333333);
    \draw[] (1,0.6666666666666666) -- (3,0.6666666666666666);
    
    \draw (2,0.3333333333333333) node[scale=1.2000000000000002,inner sep=0pt,shape=circle,minimum size=3pt,fill=black] {};
    \draw (2,0.3333333333333333) node[scale=1.2000000000000002,inner sep=0pt,above=3pt] {$t$};
    \draw (2,0.3333333333333333) node[scale=1.2000000000000002,inner sep=0pt,below=5pt,right=-2.3pt] {$t^{\unaryminus 1}$};
    
    \draw (2,0.6666666666666666) node[scale=1.2000000000000002,inner sep=0pt,shape=circle,minimum size=3pt,fill=black] {};
    \draw (2,0.6666666666666666) node[scale=1.2000000000000002,inner sep=0pt,below=3pt] {$t$};
    \draw (2,0.6666666666666666) node[scale=1.2000000000000002,inner sep=0pt,above=7pt,right=-2.3pt] {$t^{\unaryminus 1}$};
\end{scope}

\draw (2.5, 0.5) node[scale=2.5,inner sep=0pt] {$\mathrm{\sqsupset}$};
\draw (5.5, 0.5) node[scale=2.5,inner sep=0pt] {$=$};


\end{tikzpicture}}
    \caption{$D_{3, 3}^-$, $D_{3, 3}$ and $D_{3, 3}^- \dsumIIr D_{3,3}$.}
    \label{fig:sumII_example}
\end{figure}

Now recall that $r$ is odd and $N\geq3$ is odd. For $r \geq 3$ we define
$$D_{rN, N} = D_{(r-2)N, N} \dsumIIr D_{N-2, N-2}^- \dsumIIl D_{N, N}\text{.}$$
See Fig.~\ref{fig:D_15_5_and_D_25_5} for examples. Clearly, all the $A^+$- and $A^-$-vertices of $D_{rN, N}$ have degree $N$. Due to the properties of $\dsumIIr$ and $\dsumIIl$ we get by induction that $D_{rN, N}$ has only one face and its label is $[a,t]^{((r-2)N + (N-2) + 1) + N + 1} = [a, t]^{rN}$. The genus is equal to $\intpart*{rN \mathbin{/} 2} - \intpart*{rN \mathbin{/} N} + 1$ since
$$
\intpart*{\frac{(r-2)N}{2}} - \intpart*{\frac{(r-2)N}{N}} + 1 + \frac{N - 3}{2} + \frac{N - 1}{2} = \intpart*{\frac{rN}{2}} - \intpart*{\frac{rN}{N}} + 1\text{.}
$$

\begin{figure}[!h]
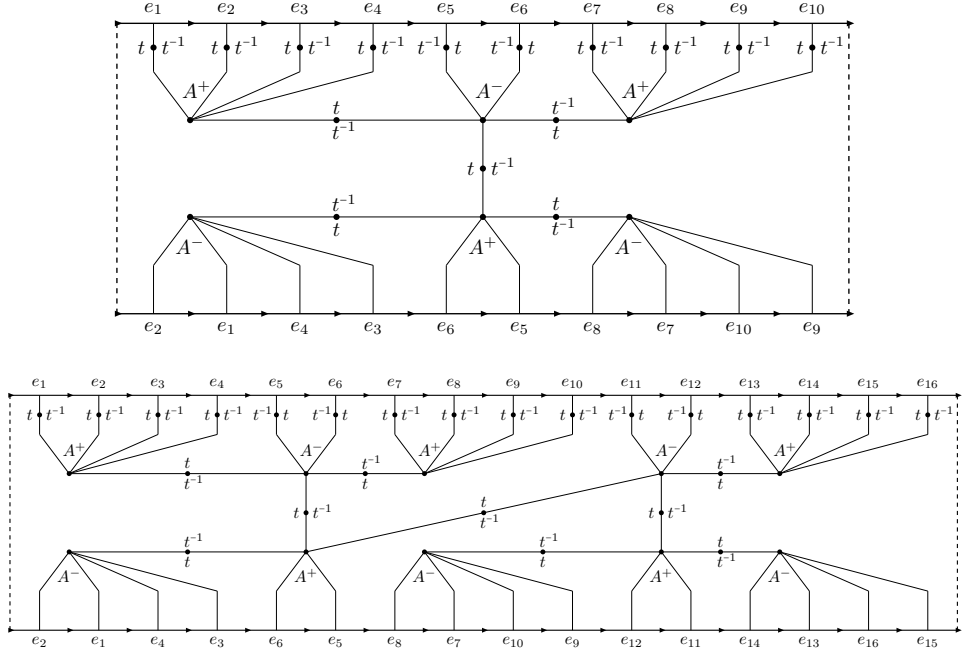

    \centering
    \bigskip
    \subfloat{\resizebox{0.7\textwidth}{!}{\input{images/D_15_5.tikz}}}
    
    \subfloat{\resizebox{0.9\textwidth}{!}{\input{images/D_25_5.tikz}}}
    \caption{$D_{15,5}$ and $D_{25,5}$.}
    \label{fig:D_15_5_and_D_25_5}
\end{figure}

\begin{remark}
For $N=3$ there arises the degenerate diagram $D_{1, 1}^-$. In that case the diagram $D_{(r-2)3, 3} \dsumIIr D_{1, 1}^- \dsumIIl D_{3, 3}$ is correctly defined due to Definition~\ref{def:borders_D_1_1}. See Fig.~\ref{fig:D_15_3} for an example.
\end{remark}

\begin{figure}[!h]
    \centering
    \bigskip
    \resizebox{0.8\textwidth}{!}{%
        \begin{tikzpicture}[xscale=3.0,yscale=6]
\providecommand{\unaryminus}{\scalebox{0.5}[0.75]{\( - \)}}
\fontsize{10}{12}\selectfont
\newcommand*{\DrN}{%
\draw[dashed, thick] (1,0) .. controls(0.25, 0.5) .. (1,1);
\draw[dashed, thick] (1,0) .. controls(1.75, 0.5) .. (1,1);
\draw (1,0) node[scale=1.2000000000000002,inner sep=0pt,shape=isosceles triangle,minimum size=3pt,fill=black] {};
\draw (1,1) node[scale=1.2000000000000002,inner sep=0pt,shape=isosceles triangle,minimum size=3pt,fill=black] {};
\draw (1,0.3333333333333333) node[scale=1.2000000000000002,inner sep=0pt,shape=circle,minimum size=3pt,fill=black] {};
\draw (1,0.3333333333333333) node[scale=1.2000000000000002,inner sep=0pt,below=5.3665631459994945pt] {$A^{+}$};
\draw (1,0.6666666666666666) node[scale=1.2000000000000002,inner sep=0pt,shape=circle,minimum size=3pt,fill=black] {};
\draw (1,0.6666666666666666) node[scale=1.2000000000000002,inner sep=0pt,above=10.366563145999494pt,right=-4pt] {$A^{-}$};
 (1.5,0.8333333333333334);
\draw[] (1,0.3333333333333333) -- (1,0.6666666666666666);
\draw (1,0.5) node[scale=1.2000000000000002,inner sep=0pt,shape=circle,minimum size=3pt,fill=black] {};
\draw (1,0.5) node[scale=1.2000000000000002,inner sep=0pt,left=3.8pt] {$t$};
\draw (1,0.5) node[scale=1.2000000000000002,inner sep=0pt,right=10pt,above=-3.3pt] {$t^{\unaryminus 1}$};
\draw (1,0) node[scale=1.2000000000000002,inner sep=0pt,minimum size=14pt,below=0pt] {};
\draw (1,1) node[scale=1.2000000000000002,inner sep=0pt,minimum size=14pt,above=0pt] {};
}

\begin{scope}[shift={(0,0)}]
    \DrN
\end{scope}

\end{tikzpicture}%
        \hspace{0.3cm}%
        \input{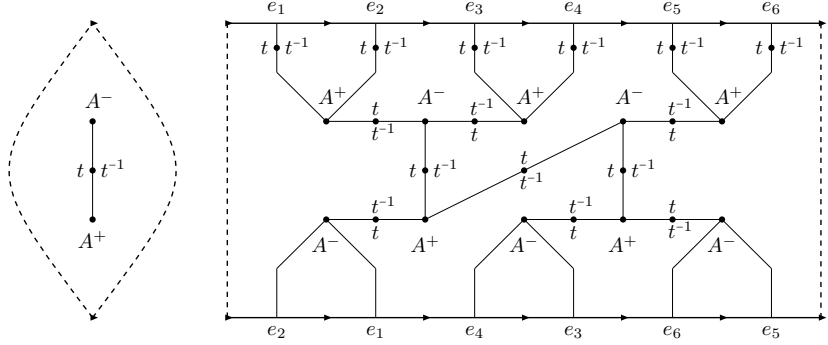}%
    }
    \caption{$D_{1,1}^{-}$ and $D_{15,3}$.}
    \label{fig:D_15_3}
\end{figure}

Finally, let us construct a diagram for $n=rN+2s$. This is done by composing the diagram $D_{rN, N}$ with $s$ auxiliary diagrams $D_{+2}$ shown in Fig.~\ref{fig:D_+2_and_D_+2_2}. 

\begin{definition}
Let $D_1$ and $D_2$ be two diagrams drawn on rectangles. Suppose that there is an edge $f_1$ in $D_1$ whose adjacent corners have labels $t$, $t^{-1}$, $a^{\varepsilon}$ and $a^{\varepsilon}$, and there is an edge $f_2$ in $D_2$ whose adjacent corners have labels $t$, $t^{-1}$, $a^{\varepsilon}$ and $a^{\varepsilon}$ (where $\varepsilon \in \{+1, -1\}$). Suppose also that the interior of $f_1$ has a nonempty intersection with the right border of $D_1$ and the interior of $f_2$ has a nonempty intersection with the left border of $D_2$. Orient $f_1$ and $f_2$ such that they start at $T$-vertices. Suppose that these orientations are both the same as (or both different from) the orientation of the respective border. We denote by $D_1 \dsumIII D_2$ a diagram obtained from $D_1 + D_2$ as follows: add a new edge connecting the $A$-vertex adjacent to $f_1$ with the $T$-vertex adjacent to $f_2$, add a new edge connecting the $T$-vertex adjacent to $f_1$ with the $A$-vertex adjacent to $f_2$, and remove the edges $f_1$ and $f_2$.
\end{definition}

See Fig.~\ref{fig:D_+2_and_D_+2_2} for examples. It is not hard to see that this operation has the following properties: If $D_1$ ($D_2$) has only one face and $D_2$ ($D_1$) has two faces such that the edge $f_2$ ($f_1$) is adjacent to both these faces, then $D_1 \dsumIII D_2$ has only one face. If the labels of the faces of $D_1$ and $D_2$ are $[a, t]^{n_1}$, $[a, t]^{n_2}$ and $[a, t]^{n_3}$, then the label of the face of $D_1 \dsumIII D_2$ is $[a, t]^{n_1 + n_2 + n_3}$.
If $D_1$ has genus $k_1$ and $D_2$ has genus $k_2$, then $D_1 \dsumIII D_2$ has genus $k_1 + k_2$. If all the vertices of $D_1$ and $D_2$ are interior, then all the vertices of $D_1 \dsumIII D_2$ are also interior.

We define
$$D_{rN+2s, N} = D_{rN, N} \dsumIII \underbrace{D_{+2} \dsumIII \ldots \dsumIII D_{+2}}_{s \text{ summands}}\text{.}$$
See Fig.~\ref{fig:D_+2_and_D_+2_2} for an example. Clearly, $D_{+2}$ has two faces and their labels are equal to $[a, t]$. All the vertices of $D_{+2}$ are interior and the whole left border of $D_{+2}$ is adjacent to both faces of $D_{+2}$. The genus of $D_{+2}$ is $1$. Due to the properties of $\dsumIII$ we get that $D_{rN+2s, N}$ has only one face and the label of this face is $[a,t]^{rN + 2s}$. The genus of $D_{rN+2s, N}$ is
$$
\intpart*{\frac{rN}{2}} - \intpart*{\frac{rN}{N}} + 1 + s = \intpart*{\frac{rN + 2s}{2}} - \intpart*{\frac{rN + 2s}{N}} + 1
$$
if $2s < N$. All the vertices of $D_{rN+2s, N}$ are interior if $a^N=1$.

\begin{figure}[!h]
    \centering
    \bigskip
    \resizebox{0.9\textwidth}{!}{%
        \begin{tikzpicture}[xscale=1.5,yscale=6]
\providecommand{\unaryminus}{\scalebox{0.5}[0.75]{\( - \)}}
\fontsize{10}{12}\selectfont
\draw[thick] (0,1) -- (2,1);
\draw[thick] (2,0) -- (0,0);
\draw[dashed, thick] (0,0) -- (0,1);
\draw[dashed, thick] (2,1) -- (2,0);
\draw (0,0) node[scale=1.2000000000000002,inner sep=0pt,shape=isosceles triangle,minimum size=3pt,fill=black] {};
\draw (0,1) node[scale=1.2000000000000002,inner sep=0pt,shape=isosceles triangle,minimum size=3pt,fill=black] {};
\draw (1,0) node[scale=1.2000000000000002,inner sep=0pt,shape=isosceles triangle,minimum size=3pt,fill=black] {};
\draw (1,1) node[scale=1.2000000000000002,inner sep=0pt,shape=isosceles triangle,minimum size=3pt,fill=black] {};
\draw (2,0) node[scale=1.2000000000000002,inner sep=0pt,shape=isosceles triangle,minimum size=3pt,fill=black] {};
\draw (2,1) node[scale=1.2000000000000002,inner sep=0pt,shape=isosceles triangle,minimum size=3pt,fill=black] {};
\draw (0.5,0) node[scale=1.2000000000000002,inner sep=0pt,minimum size=14pt,below=0pt] {$e_{2}$};
\draw (0.5,1) node[scale=1.2000000000000002,inner sep=0pt,minimum size=14pt,above=0pt] {$e_{1}$};
\draw (1.5,0) node[scale=1.2000000000000002,inner sep=0pt,minimum size=14pt,below=0pt] {$e_{1}$};
\draw (1.5,1) node[scale=1.2000000000000002,inner sep=0pt,minimum size=14pt,above=0pt] {$e_{2}$};
\draw[] (0.25,1) -- (0.25,0.3333333333333333);
\draw (1,0.3333333333333333) node[scale=1.2000000000000002,inner sep=0pt,shape=circle,minimum size=3pt,fill=black] {};
\draw (1,0.3333333333333333) node[scale=1.2000000000000002,inner sep=0pt,right=6pt,above=3pt] {$a$};
\draw (1,0.3333333333333333) node[scale=1.2000000000000002,inner sep=0pt,right=0pt,below=6.123105625617663pt] {$a$};
\draw (1,0.3333333333333333) node[scale=1.2000000000000002,inner sep=0pt,left=7pt,above=3pt] {$a^{\unaryminus 1}$};
\draw (1,0.3333333333333333) node[scale=1.2000000000000002,inner sep=0pt,left=12.80776406404415pt,below=1pt] {$a^{\unaryminus 1}$};
\draw (1,0.6666666666666666) node[scale=1.2000000000000002,inner sep=0pt,shape=circle,minimum size=3pt,fill=black] {};
\draw (1,0.6666666666666666) node[scale=1.2000000000000002,inner sep=0pt,right=10.05397531527947pt,above=2pt] {$t$};
\draw (1,0.6666666666666666) node[scale=1.2000000000000002,inner sep=0pt,left=0pt,above=7.123105625617663pt] {$t$};
\draw (1,0.6666666666666666) node[scale=1.2000000000000002,inner sep=0pt,right=9pt,below=2.5pt] {$t^{\unaryminus 1}$};
\draw (1,0.6666666666666666) node[scale=1.2000000000000002,inner sep=0pt,left=7pt,below=2.5pt] {$t^{\unaryminus 1}$};
\draw[] (1,0.3333333333333333) -- (1,0.6666666666666666);
\draw[] (1,0.6666666666666666) -- (0.5,0.8333333333333334);
\draw[] (0.5,0.8333333333333334) -- (0.5,1);
\draw[] (1,0.3333333333333333) -- (0.5,0.16666666666666666);
\draw[] (0.5,0.16666666666666666) -- (0.5,0);
\draw[] (1,0.6666666666666666) -- (1.5,0.8333333333333334);
\draw[] (1.5,0.8333333333333334) -- (1.5,1);
\draw[] (1,0.3333333333333333) -- (1.5,0.16666666666666666);
\draw[] (1.5,0.16666666666666666) -- (1.5,0);
\draw[] (0.25,0.3333333333333333) -- (1,0.3333333333333333);
\draw[] (1.75,0.6666666666666666) -- (1,0.6666666666666666);
\draw[] (0.75,0) -- (0.75,0.041666666666666664);
\draw[] (0.75,0.041666666666666664) -- (1.25,0.041666666666666664);
\draw[] (1.25,0.041666666666666664) -- (1.25,0);
\draw[] (1.75,0.6666666666666666) -- (1.75,1);

\end{tikzpicture}%
        \hspace{0.3cm}%
        \begin{tikzpicture}[xscale=1.5,yscale=6]
\providecommand{\unaryminus}{\scalebox{0.5}[0.75]{\( - \)}}
\fontsize{10}{12}\selectfont
\draw[thick] (0,1) -- (4,1);
\draw[thick] (4,0) -- (0,0);
\draw[dashed, thick] (0,0) -- (0,1);
\draw[dashed, thick] (4,1) -- (4,0);
\draw (0,0) node[scale=1.2000000000000002,inner sep=0pt,shape=isosceles triangle,minimum size=3pt,fill=black] {};
\draw (0,1) node[scale=1.2000000000000002,inner sep=0pt,shape=isosceles triangle,minimum size=3pt,fill=black] {};
\draw (1,0) node[scale=1.2000000000000002,inner sep=0pt,shape=isosceles triangle,minimum size=3pt,fill=black] {};
\draw (1,1) node[scale=1.2000000000000002,inner sep=0pt,shape=isosceles triangle,minimum size=3pt,fill=black] {};
\draw (2,0) node[scale=1.2000000000000002,inner sep=0pt,shape=isosceles triangle,minimum size=3pt,fill=black] {};
\draw (2,1) node[scale=1.2000000000000002,inner sep=0pt,shape=isosceles triangle,minimum size=3pt,fill=black] {};
\draw (3,0) node[scale=1.2000000000000002,inner sep=0pt,shape=isosceles triangle,minimum size=3pt,fill=black] {};
\draw (3,1) node[scale=1.2000000000000002,inner sep=0pt,shape=isosceles triangle,minimum size=3pt,fill=black] {};
\draw (4,0) node[scale=1.2000000000000002,inner sep=0pt,shape=isosceles triangle,minimum size=3pt,fill=black] {};
\draw (4,1) node[scale=1.2000000000000002,inner sep=0pt,shape=isosceles triangle,minimum size=3pt,fill=black] {};
\draw (0.5,0) node[scale=1.2000000000000002,inner sep=0pt,minimum size=14pt,below=0pt] {$e_{2}$};
\draw (0.5,1) node[scale=1.2000000000000002,inner sep=0pt,minimum size=14pt,above=0pt] {$e_{1}$};
\draw (1.5,0) node[scale=1.2000000000000002,inner sep=0pt,minimum size=14pt,below=0pt] {$e_{1}$};
\draw (1.5,1) node[scale=1.2000000000000002,inner sep=0pt,minimum size=14pt,above=0pt] {$e_{2}$};
\draw (2.5,0) node[scale=1.2000000000000002,inner sep=0pt,minimum size=14pt,below=0pt] {$e_{4}$};
\draw (2.5,1) node[scale=1.2000000000000002,inner sep=0pt,minimum size=14pt,above=0pt] {$e_{3}$};
\draw (3.5,0) node[scale=1.2000000000000002,inner sep=0pt,minimum size=14pt,below=0pt] {$e_{3}$};
\draw (3.5,1) node[scale=1.2000000000000002,inner sep=0pt,minimum size=14pt,above=0pt] {$e_{4}$};
\draw[] (0.25,1) -- (0.25,0.3333333333333333);
\draw (1,0.3333333333333333) node[scale=1.2000000000000002,inner sep=0pt,shape=circle,minimum size=3pt,fill=black] {};
\draw (1,0.3333333333333333) node[scale=1.2000000000000002,inner sep=0pt,right=6pt,above=3pt] {$a$};
\draw (1,0.3333333333333333) node[scale=1.2000000000000002,inner sep=0pt,right=0pt,below=6.123105625617663pt] {$a$};
\draw (1,0.3333333333333333) node[scale=1.2000000000000002,inner sep=0pt,left=7pt,above=3pt] {$a^{\unaryminus 1}$};
\draw (1,0.3333333333333333) node[scale=1.2000000000000002,inner sep=0pt,left=12.80776406404415pt,below=1pt] {$a^{\unaryminus 1}$};
\draw (1,0.6666666666666666) node[scale=1.2000000000000002,inner sep=0pt,shape=circle,minimum size=3pt,fill=black] {};
\draw (1,0.6666666666666666) node[scale=1.2000000000000002,inner sep=0pt,right=10.05397531527947pt,above=2pt] {$t$};
\draw (1,0.6666666666666666) node[scale=1.2000000000000002,inner sep=0pt,left=0pt,above=7.123105625617663pt] {$t$};
\draw (1,0.6666666666666666) node[scale=1.2000000000000002,inner sep=0pt,right=9pt,below=2.5pt] {$t^{\unaryminus 1}$};
\draw (1,0.6666666666666666) node[scale=1.2000000000000002,inner sep=0pt,left=7pt,below=2.5pt] {$t^{\unaryminus 1}$};
\draw[] (1,0.3333333333333333) -- (1,0.6666666666666666);
\draw[] (1,0.6666666666666666) -- (0.5,0.8333333333333334);
\draw[] (0.5,0.8333333333333334) -- (0.5,1);
\draw[] (1,0.3333333333333333) -- (0.5,0.16666666666666666);
\draw[] (0.5,0.16666666666666666) -- (0.5,0);
\draw[] (1,0.6666666666666666) -- (1.5,0.8333333333333334);
\draw[] (1.5,0.8333333333333334) -- (1.5,1);
\draw[] (1,0.3333333333333333) -- (1.5,0.16666666666666666);
\draw[] (1.5,0.16666666666666666) -- (1.5,0);
\draw[] (0.25,0.3333333333333333) -- (1,0.3333333333333333);
\draw[] (1.75,0.6666666666666666) -- (1,0.6666666666666666);
\draw[] (0.75,0) -- (0.75,0.041666666666666664);
\draw[] (0.75,0.041666666666666664) -- (1.25,0.041666666666666664);
\draw[] (1.25,0.041666666666666664) -- (1.25,0);
\draw (3,0.3333333333333333) node[scale=1.2000000000000002,inner sep=0pt,shape=circle,minimum size=3pt,fill=black] {};
\draw (3,0.3333333333333333) node[scale=1.2000000000000002,inner sep=0pt,right=6pt,above=3pt] {$a$};
\draw (3,0.3333333333333333) node[scale=1.2000000000000002,inner sep=0pt,right=0pt,below=6.123105625617663pt] {$a$};
\draw (3,0.3333333333333333) node[scale=1.2000000000000002,inner sep=0pt,left=7pt,above=3pt] {$a^{\unaryminus 1}$};
\draw (3,0.3333333333333333) node[scale=1.2000000000000002,inner sep=0pt,left=12.80776406404415pt,below=1pt] {$a^{\unaryminus 1}$};
\draw (3,0.6666666666666666) node[scale=1.2000000000000002,inner sep=0pt,shape=circle,minimum size=3pt,fill=black] {};
\draw (3,0.6666666666666666) node[scale=1.2000000000000002,inner sep=0pt,right=10.05397531527947pt,above=2pt] {$t$};
\draw (3,0.6666666666666666) node[scale=1.2000000000000002,inner sep=0pt,left=0pt,above=7.123105625617663pt] {$t$};
\draw (3,0.6666666666666666) node[scale=1.2000000000000002,inner sep=0pt,right=9pt,below=2.5pt] {$t^{\unaryminus 1}$};
\draw (3,0.6666666666666666) node[scale=1.2000000000000002,inner sep=0pt,left=7pt,below=2.5pt] {$t^{\unaryminus 1}$};
\draw[] (3,0.3333333333333333) -- (3,0.6666666666666666);
\draw[] (3,0.6666666666666666) -- (2.5,0.8333333333333334);
\draw[] (2.5,0.8333333333333334) -- (2.5,1);
\draw[] (3,0.3333333333333333) -- (2.5,0.16666666666666666);
\draw[] (2.5,0.16666666666666666) -- (2.5,0);
\draw[] (3,0.6666666666666666) -- (3.5,0.8333333333333334);
\draw[] (3.5,0.8333333333333334) -- (3.5,1);
\draw[] (3,0.3333333333333333) -- (3.5,0.16666666666666666);
\draw[] (3.5,0.16666666666666666) -- (3.5,0);
\draw[] (2.25,0.3333333333333333) -- (3,0.3333333333333333);
\draw[] (3.75,0.6666666666666666) -- (3,0.6666666666666666);
\draw[] (2.75,0) -- (2.75,0.041666666666666664);
\draw[] (2.75,0.041666666666666664) -- (3.25,0.041666666666666664);
\draw[] (3.25,0.041666666666666664) -- (3.25,0);
\draw[] (1.75,0.6666666666666666) -- (2.25,0.3333333333333333);
\draw[] (1.75,1) -- (1.75,0.9583333333333334);
\draw[] (1.75,0.9583333333333334) -- (2.25,0.9583333333333334);
\draw[] (2.25,0.9583333333333334) -- (2.25,1);
\draw[] (3.75,0.6666666666666666) -- (3.75,1);

\end{tikzpicture}%
        \hspace{0.3cm}%
        \begin{tikzpicture}[xscale=1.5,yscale=6]
\providecommand{\unaryminus}{\scalebox{0.5}[0.75]{\( - \)}}
\fontsize{10}{12}\selectfont
\newcommand*{\DrN}{%
\draw[thick] (0,1) -- (4,1);
\draw[thick] (4,0) -- (0,0);
\draw[dashed, thick] (0,0) -- (0,1);
\draw (0,0) node[scale=1.2000000000000002,inner sep=0pt,shape=isosceles triangle,minimum size=3pt,fill=black] {};
\draw (0,1) node[scale=1.2000000000000002,inner sep=0pt,shape=isosceles triangle,minimum size=3pt,fill=black] {};
\draw (1,0) node[scale=1.2000000000000002,inner sep=0pt,shape=isosceles triangle,minimum size=3pt,fill=black] {};
\draw (1,1) node[scale=1.2000000000000002,inner sep=0pt,shape=isosceles triangle,minimum size=3pt,fill=black] {};
\draw (2,0) node[scale=1.2000000000000002,inner sep=0pt,shape=isosceles triangle,minimum size=3pt,fill=black] {};
\draw (2,1) node[scale=1.2000000000000002,inner sep=0pt,shape=isosceles triangle,minimum size=3pt,fill=black] {};
\draw (3,0) node[scale=1.2000000000000002,inner sep=0pt,shape=isosceles triangle,minimum size=3pt,fill=black] {};
\draw (3,1) node[scale=1.2000000000000002,inner sep=0pt,shape=isosceles triangle,minimum size=3pt,fill=black] {};
\draw (4,0) node[scale=1.2000000000000002,inner sep=0pt,shape=isosceles triangle,minimum size=3pt,fill=black] {};
\draw (4,1) node[scale=1.2000000000000002,inner sep=0pt,shape=isosceles triangle,minimum size=3pt,fill=black] {};
\draw (0.5,0) node[scale=1.2000000000000002,inner sep=0pt,minimum size=14pt,below=0pt] {$e_{2}$};
\draw (0.5,1) node[scale=1.2000000000000002,inner sep=0pt,minimum size=14pt,above=0pt] {$e_{1}$};
\draw (1.5,0) node[scale=1.2000000000000002,inner sep=0pt,minimum size=14pt,below=0pt] {$e_{1}$};
\draw (1.5,1) node[scale=1.2000000000000002,inner sep=0pt,minimum size=14pt,above=0pt] {$e_{2}$};
\draw (2.5,0) node[scale=1.2000000000000002,inner sep=0pt,minimum size=14pt,below=0pt] {$e_{4}$};
\draw (2.5,1) node[scale=1.2000000000000002,inner sep=0pt,minimum size=14pt,above=0pt] {$e_{3}$};
\draw (3.5,0) node[scale=1.2000000000000002,inner sep=0pt,minimum size=14pt,below=0pt] {$e_{3}$};
\draw (3.5,1) node[scale=1.2000000000000002,inner sep=0pt,minimum size=14pt,above=0pt] {$e_{4}$};
\draw (1,0.3333333333333333) node[scale=1.2000000000000002,inner sep=0pt,shape=circle,minimum size=3pt,fill=black] {};
\draw (1,0.3333333333333333) node[scale=1.2000000000000002,inner sep=0pt,below=9.895453501482391pt] {$A^{-}$};
\draw (1,0.6666666666666666) node[scale=1.2000000000000002,inner sep=0pt,shape=circle,minimum size=3pt,fill=black] {};
\draw (1,0.6666666666666666) node[scale=1.2000000000000002,inner sep=0pt,above=14.895453501482391pt,right=-4pt] {$A^{+}$};
\draw[] (1,0.3333333333333333) -- (0.5,0.16666666666666666);
\draw[] (0.5,0.16666666666666666) -- (0.5,0);
\draw[] (1,0.6666666666666666) -- (0.5,0.8333333333333334);
\draw[] (0.5,0.8333333333333334) -- (0.5,1);
\draw (0.5,0.9166666666666666) node[scale=1.2000000000000002,inner sep=0pt,shape=circle,minimum size=3pt,fill=black] {};
\draw (0.5,0.9166666666666666) node[scale=1.2000000000000002,inner sep=0pt,left=3.8pt] {$t$};
\draw (0.5,0.9166666666666666) node[scale=1.2000000000000002,inner sep=0pt,right=10pt,above=-3.3pt] {$t^{\unaryminus 1}$};
\draw[] (1,0.3333333333333333) -- (1.5,0.16666666666666666);
\draw[] (1.5,0.16666666666666666) -- (1.5,0);
\draw[] (1,0.6666666666666666) -- (1.5,0.8333333333333334);
\draw[] (1.5,0.8333333333333334) -- (1.5,1);
\draw (1.5,0.9166666666666666) node[scale=1.2000000000000002,inner sep=0pt,shape=circle,minimum size=3pt,fill=black] {};
\draw (1.5,0.9166666666666666) node[scale=1.2000000000000002,inner sep=0pt,left=3.8pt] {$t$};
\draw (1.5,0.9166666666666666) node[scale=1.2000000000000002,inner sep=0pt,right=10pt,above=-3.3pt] {$t^{\unaryminus 1}$};
\draw[] (1,0.3333333333333333) -- (2.5,0.16666666666666666);
\draw[] (2.5,0.16666666666666666) -- (2.5,0);
\draw[] (1,0.6666666666666666) -- (2.5,0.8333333333333334);
\draw[] (2.5,0.8333333333333334) -- (2.5,1);
\draw (2.5,0.9166666666666666) node[scale=1.2000000000000002,inner sep=0pt,shape=circle,minimum size=3pt,fill=black] {};
\draw (2.5,0.9166666666666666) node[scale=1.2000000000000002,inner sep=0pt,left=3.8pt] {$t$};
\draw (2.5,0.9166666666666666) node[scale=1.2000000000000002,inner sep=0pt,right=10pt,above=-3.3pt] {$t^{\unaryminus 1}$};
\draw[] (1,0.3333333333333333) -- (3.5,0.16666666666666666);
\draw[] (3.5,0.16666666666666666) -- (3.5,0);
\draw[] (4,0.9583333333333334) -- (3.5,0.9583333333333334);
\draw[] (3.5,0.9583333333333334) -- (3.5,1);
\draw[] (1,0.6666666666666666) -- (3.75,0.6666666666666666);
\draw[] (3.75,0.6666666666666666) -- (4,0.5);
\draw (3.5,0.6666666666666666) node[scale=1.2000000000000002,inner sep=0pt,shape=circle,minimum size=3pt,fill=black] {};
\draw (3.5,0.6666666666666666) node[scale=1.2000000000000002,inner sep=0pt,above=3pt] {$t$};
\draw (3.5,0.6666666666666666) node[scale=1.2000000000000002,inner sep=0pt,below=5pt,right=-2.3pt] {$t^{\unaryminus 1}$};
\draw[] (1,0.3333333333333333) -- (1,0.6666666666666666);
\draw (1,0.5) node[scale=1.2000000000000002,inner sep=0pt,shape=circle,minimum size=3pt,fill=black] {};
\draw (1,0.5) node[scale=1.2000000000000002,inner sep=0pt,right=3.8pt] {$t$};
\draw (1,0.5) node[scale=1.2000000000000002,inner sep=0pt,left=8pt,above=-3.3pt] {$t^{\unaryminus 1}$};
}
\newcommand*{\Dplustwos}{%
\draw[thick] (0,1) -- (2,1);
\draw[thick] (2,0) -- (0,0);
\draw[dashed, thick] (2,1) -- (2,0);
\draw (0,0) node[scale=1.2000000000000002,inner sep=0pt,shape=isosceles triangle,minimum size=3pt,fill=black] {};
\draw (0,1) node[scale=1.2000000000000002,inner sep=0pt,shape=isosceles triangle,minimum size=3pt,fill=black] {};
\draw (1,0) node[scale=1.2000000000000002,inner sep=0pt,shape=isosceles triangle,minimum size=3pt,fill=black] {};
\draw (1,1) node[scale=1.2000000000000002,inner sep=0pt,shape=isosceles triangle,minimum size=3pt,fill=black] {};
\draw (2,0) node[scale=1.2000000000000002,inner sep=0pt,shape=isosceles triangle,minimum size=3pt,fill=black] {};
\draw (2,1) node[scale=1.2000000000000002,inner sep=0pt,shape=isosceles triangle,minimum size=3pt,fill=black] {};
\draw (0.5,0) node[scale=1.2000000000000002,inner sep=0pt,minimum size=14pt,below=0pt] {$e_{6}$};
\draw (0.5,1) node[scale=1.2000000000000002,inner sep=0pt,minimum size=14pt,above=0pt] {$e_{5}$};
\draw (1.5,0) node[scale=1.2000000000000002,inner sep=0pt,minimum size=14pt,below=0pt] {$e_{5}$};
\draw (1.5,1) node[scale=1.2000000000000002,inner sep=0pt,minimum size=14pt,above=0pt] {$e_{6}$};
\draw[] (0,0.9583333333333334) -- (0.25,0.9583333333333334);
\draw[] (0.25,0.9583333333333334) -- (0.25,1);
\draw[] (0,0.5) -- (0.25,0.3333333333333333);
\draw (1,0.3333333333333333) node[scale=1.2000000000000002,inner sep=0pt,shape=circle,minimum size=3pt,fill=black] {};
\draw (1,0.3333333333333333) node[scale=1.2000000000000002,inner sep=0pt,right=6pt,above=3pt] {$a$};
\draw (1,0.3333333333333333) node[scale=1.2000000000000002,inner sep=0pt,right=0pt,below=6.123105625617663pt] {$a$};
\draw (1,0.3333333333333333) node[scale=1.2000000000000002,inner sep=0pt,left=7pt,above=3pt] {$a^{\unaryminus 1}$};
\draw (1,0.3333333333333333) node[scale=1.2000000000000002,inner sep=0pt,left=12.80776406404415pt,below=1pt] {$a^{\unaryminus 1}$};
\draw (1,0.6666666666666666) node[scale=1.2000000000000002,inner sep=0pt,shape=circle,minimum size=3pt,fill=black] {};
\draw (1,0.6666666666666666) node[scale=1.2000000000000002,inner sep=0pt,right=10.05397531527947pt,above=2pt] {$t$};
\draw (1,0.6666666666666666) node[scale=1.2000000000000002,inner sep=0pt,left=0pt,above=7.123105625617663pt] {$t$};
\draw (1,0.6666666666666666) node[scale=1.2000000000000002,inner sep=0pt,right=9pt,below=2.5pt] {$t^{\unaryminus 1}$};
\draw (1,0.6666666666666666) node[scale=1.2000000000000002,inner sep=0pt,left=7pt,below=2.5pt] {$t^{\unaryminus 1}$};
\draw[] (1,0.3333333333333333) -- (1,0.6666666666666666);
\draw[] (1,0.6666666666666666) -- (0.5,0.8333333333333334);
\draw[] (0.5,0.8333333333333334) -- (0.5,1);
\draw[] (1,0.3333333333333333) -- (0.5,0.16666666666666666);
\draw[] (0.5,0.16666666666666666) -- (0.5,0);
\draw[] (1,0.6666666666666666) -- (1.5,0.8333333333333334);
\draw[] (1.5,0.8333333333333334) -- (1.5,1);
\draw[] (1,0.3333333333333333) -- (1.5,0.16666666666666666);
\draw[] (1.5,0.16666666666666666) -- (1.5,0);
\draw[] (0.25,0.3333333333333333) -- (1,0.3333333333333333);
\draw[] (1.75,0.6666666666666666) -- (1,0.6666666666666666);
\draw[] (0.75,0) -- (0.75,0.041666666666666664);
\draw[] (0.75,0.041666666666666664) -- (1.25,0.041666666666666664);
\draw[] (1.25,0.041666666666666664) -- (1.25,0);
\draw[] (1.75,0.6666666666666666) -- (1.75,1);
}


\begin{scope}[shift={(0,0)}]
    \DrN
\end{scope}

\begin{scope}[shift={(4,0)}]
    \Dplustwos
\end{scope}

\end{tikzpicture}%
    }
    \caption{$D_{+2}$, $D_{+2} \dsumIII D_{+2}$ and $D_{7, 5} = D_{5, 5} \dsumIII D_{+2}$.}
    \label{fig:D_+2_and_D_+2_2}
\end{figure}

\vspace{0.35cm}
\noindent
\textbf{Case 2. $\boldsymbol{n=rN+2s+1}$, where $\boldsymbol{N}$ is odd and $\boldsymbol{r}$ is even}
\vspace{0.05cm}

\noindent
A diagram for this case is obtained by composing an auxiliary diagram $D_{+N+1, N}$ with the diagram $D_{(r-1)N+2s,N}$. The diagram $D_{+N+1, N}$ is obtained from $D_{N, N}^-$ in the following way: add a new edge which starts from the left corner of the central $T$-vertex, then consecutively traverses all the edges of the rectangle $e_2, e_1, \ldots, e_{N-1}, e_{N-2}$ and ends at the right corner of the central $T$-vertex. Label both left corners at this new vertex of degree $4$ with $t$ and both right corners with $t^{-1}$. Add a new $A$-vertex of degree $2$ to this new edge. Label its corners with $a$ and $a^{-1}$ so that the labels of the faces are equal to powers of $[a, t]$. See Fig.~\ref{fig:D_+N+1} for examples. Since $D_{N, N}$ has one face, the diagram $D_{+N+1, N}$ has $2$ faces. Due to symmetry, their labels are equal to $[a, t]^{\frac{N+1}{2}}$ and each $t$-arc is adjacent to these two faces.

\begin{figure}[!h]
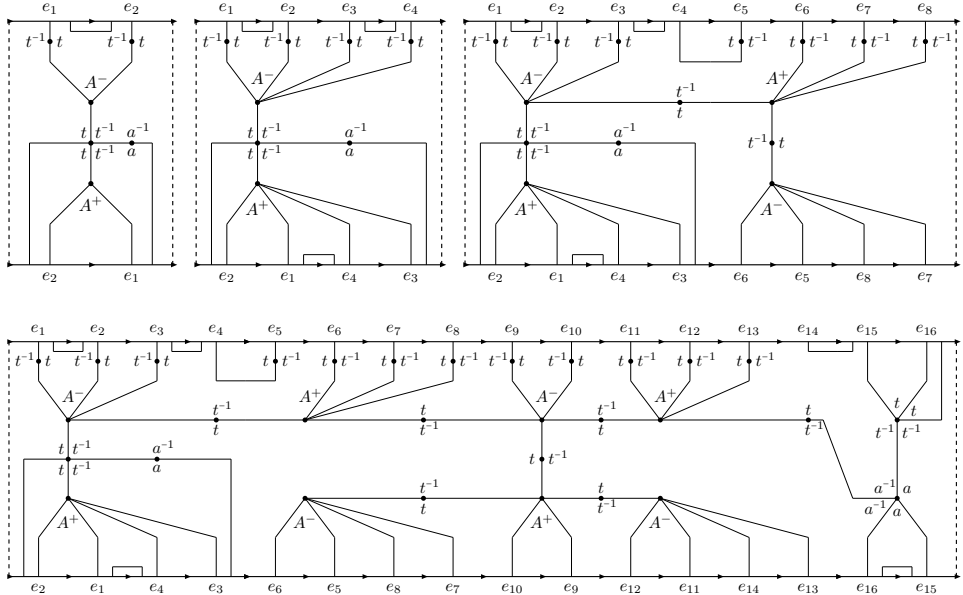

    \centering
    \bigskip
    \subfloat{
        \resizebox{0.9\textwidth}{!}{%
            \begin{tikzpicture}[xscale=2.0,yscale=6]
\providecommand{\unaryminus}{\scalebox{0.5}[0.75]{\( - \)}}
\fontsize{10}{12}\selectfont
\draw[thick] (0,1) -- (2,1);
\draw[thick] (2,0) -- (0,0);
\draw[dashed, thick] (0,0) -- (0,1);
\draw[dashed, thick] (2,1) -- (2,0);
\draw (0,0) node[scale=1.2000000000000002,inner sep=0pt,shape=isosceles triangle,minimum size=3pt,fill=black] {};
\draw (0,1) node[scale=1.2000000000000002,inner sep=0pt,shape=isosceles triangle,minimum size=3pt,fill=black] {};
\draw (1,0) node[scale=1.2000000000000002,inner sep=0pt,shape=isosceles triangle,minimum size=3pt,fill=black] {};
\draw (1,1) node[scale=1.2000000000000002,inner sep=0pt,shape=isosceles triangle,minimum size=3pt,fill=black] {};
\draw (2,0) node[scale=1.2000000000000002,inner sep=0pt,shape=isosceles triangle,minimum size=3pt,fill=black] {};
\draw (2,1) node[scale=1.2000000000000002,inner sep=0pt,shape=isosceles triangle,minimum size=3pt,fill=black] {};
\draw (0.5,0) node[scale=1.2000000000000002,inner sep=0pt,minimum size=14pt,below=0pt] {$e_{2}$};
\draw (0.5,1) node[scale=1.2000000000000002,inner sep=0pt,minimum size=14pt,above=0pt] {$e_{1}$};
\draw (1.5,0) node[scale=1.2000000000000002,inner sep=0pt,minimum size=14pt,below=0pt] {$e_{1}$};
\draw (1.5,1) node[scale=1.2000000000000002,inner sep=0pt,minimum size=14pt,above=0pt] {$e_{2}$};
\draw (1,0.3333333333333333) node[scale=1.2000000000000002,inner sep=0pt,shape=circle,minimum size=3pt,fill=black] {};
\draw (1,0.3333333333333333) node[scale=1.2000000000000002,inner sep=0pt,below=7.589466384404111pt] {$A^{+}$};
\draw (1,0.6666666666666666) node[scale=1.2000000000000002,inner sep=0pt,shape=circle,minimum size=3pt,fill=black] {};
\draw (1,0.6666666666666666) node[scale=1.2000000000000002,inner sep=0pt,above=12.589466384404112pt,right=-4pt] {$A^{-}$};
\draw[] (1,0.3333333333333333) -- (1,0.6666666666666666);
\draw[] (1,0.3333333333333333) -- (0.5,0.16666666666666666);
\draw[] (0.5,0.16666666666666666) -- (0.5,0);
\draw[] (1,0.6666666666666666) -- (0.5,0.8333333333333334);
\draw[] (0.5,0.8333333333333334) -- (0.5,1);
\draw (0.5,0.9166666666666666) node[scale=1.2000000000000002,inner sep=0pt,shape=circle,minimum size=3pt,fill=black] {};
\draw (0.5,0.9166666666666666) node[scale=1.2000000000000002,inner sep=0pt,right=3.8pt] {$t$};
\draw (0.5,0.9166666666666666) node[scale=1.2000000000000002,inner sep=0pt,left=8pt,above=-3.3pt] {$t^{\unaryminus 1}$};
\draw[] (1,0.3333333333333333) -- (1.5,0.16666666666666666);
\draw[] (1.5,0.16666666666666666) -- (1.5,0);
\draw[] (1,0.6666666666666666) -- (1.5,0.8333333333333334);
\draw[] (1.5,0.8333333333333334) -- (1.5,1);
\draw (1.5,0.9166666666666666) node[scale=1.2000000000000002,inner sep=0pt,shape=circle,minimum size=3pt,fill=black] {};
\draw (1.5,0.9166666666666666) node[scale=1.2000000000000002,inner sep=0pt,right=3.8pt] {$t$};
\draw (1.5,0.9166666666666666) node[scale=1.2000000000000002,inner sep=0pt,left=8pt,above=-3.3pt] {$t^{\unaryminus 1}$};
\draw (1,0.5) node[scale=1.2000000000000002,inner sep=0pt,shape=circle,minimum size=3pt,fill=black] {};
\draw (1,0.5) node[scale=1.2000000000000002,inner sep=0pt,left=4pt,above=3pt] {$t$};
\draw (1,0.5) node[scale=1.2000000000000002,inner sep=0pt,left=4pt,below=3pt] {$t$};
\draw (1,0.5) node[scale=1.2000000000000002,inner sep=0pt,right=8.5pt,above=3pt] {$t^{\unaryminus 1}$};
\draw (1,0.5) node[scale=1.2000000000000002,inner sep=0pt,right=8.5pt,below=1pt] {$t^{\unaryminus 1}$};
\draw[] (1,0.5) -- (0.25,0.5);
\draw[] (0.25,0.5) -- (0.25,0);
\draw[] (1,0.5) -- (1.75,0.5);
\draw[] (1.75,0.5) -- (1.75,0);
\draw[] (1.25,1) -- (1.25,0.9583333333333334);
\draw[] (1.25,0.9583333333333334) -- (0.75,0.9583333333333334);
\draw[] (0.75,0.9583333333333334) -- (0.75,1);
\draw (1.5,0.5) node[scale=1.2000000000000002,inner sep=0pt,shape=circle,minimum size=3pt,fill=black] {};
\draw (1.5,0.5) node[scale=1.2000000000000002,inner sep=0pt,below=3pt] {$a$};
\draw (1.5,0.5) node[scale=1.2000000000000002,inner sep=0pt,above=7.5pt,right=-3pt] {$a^{\unaryminus 1}$};

\end{tikzpicture}%
            \hspace{0.3cm}%
            \begin{tikzpicture}[xscale=1.5,yscale=6]
\providecommand{\unaryminus}{\scalebox{0.5}[0.75]{\( - \)}}
\fontsize{10}{12}\selectfont
\draw[thick] (0,1) -- (4,1);
\draw[thick] (4,0) -- (0,0);
\draw[dashed, thick] (0,0) -- (0,1);
\draw[dashed, thick] (4,1) -- (4,0);
\draw (0,0) node[scale=1.2000000000000002,inner sep=0pt,shape=isosceles triangle,minimum size=3pt,fill=black] {};
\draw (0,1) node[scale=1.2000000000000002,inner sep=0pt,shape=isosceles triangle,minimum size=3pt,fill=black] {};
\draw (1,0) node[scale=1.2000000000000002,inner sep=0pt,shape=isosceles triangle,minimum size=3pt,fill=black] {};
\draw (1,1) node[scale=1.2000000000000002,inner sep=0pt,shape=isosceles triangle,minimum size=3pt,fill=black] {};
\draw (2,0) node[scale=1.2000000000000002,inner sep=0pt,shape=isosceles triangle,minimum size=3pt,fill=black] {};
\draw (2,1) node[scale=1.2000000000000002,inner sep=0pt,shape=isosceles triangle,minimum size=3pt,fill=black] {};
\draw (3,0) node[scale=1.2000000000000002,inner sep=0pt,shape=isosceles triangle,minimum size=3pt,fill=black] {};
\draw (3,1) node[scale=1.2000000000000002,inner sep=0pt,shape=isosceles triangle,minimum size=3pt,fill=black] {};
\draw (4,0) node[scale=1.2000000000000002,inner sep=0pt,shape=isosceles triangle,minimum size=3pt,fill=black] {};
\draw (4,1) node[scale=1.2000000000000002,inner sep=0pt,shape=isosceles triangle,minimum size=3pt,fill=black] {};
\draw (0.5,0) node[scale=1.2000000000000002,inner sep=0pt,minimum size=14pt,below=0pt] {$e_{2}$};
\draw (0.5,1) node[scale=1.2000000000000002,inner sep=0pt,minimum size=14pt,above=0pt] {$e_{1}$};
\draw (1.5,0) node[scale=1.2000000000000002,inner sep=0pt,minimum size=14pt,below=0pt] {$e_{1}$};
\draw (1.5,1) node[scale=1.2000000000000002,inner sep=0pt,minimum size=14pt,above=0pt] {$e_{2}$};
\draw (2.5,0) node[scale=1.2000000000000002,inner sep=0pt,minimum size=14pt,below=0pt] {$e_{4}$};
\draw (2.5,1) node[scale=1.2000000000000002,inner sep=0pt,minimum size=14pt,above=0pt] {$e_{3}$};
\draw (3.5,0) node[scale=1.2000000000000002,inner sep=0pt,minimum size=14pt,below=0pt] {$e_{3}$};
\draw (3.5,1) node[scale=1.2000000000000002,inner sep=0pt,minimum size=14pt,above=0pt] {$e_{4}$};
\draw (1,0.3333333333333333) node[scale=1.2000000000000002,inner sep=0pt,shape=circle,minimum size=3pt,fill=black] {};
\draw (1,0.3333333333333333) node[scale=1.2000000000000002,inner sep=0pt,below=9.895453501482391pt] {$A^{+}$};
\draw (1,0.6666666666666666) node[scale=1.2000000000000002,inner sep=0pt,shape=circle,minimum size=3pt,fill=black] {};
\draw (1,0.6666666666666666) node[scale=1.2000000000000002,inner sep=0pt,above=14.895453501482391pt,right=-4pt] {$A^{-}$};
\draw[] (1,0.3333333333333333) -- (1,0.6666666666666666);
\draw[] (1,0.3333333333333333) -- (0.5,0.16666666666666666);
\draw[] (0.5,0.16666666666666666) -- (0.5,0);
\draw[] (1,0.6666666666666666) -- (0.5,0.8333333333333334);
\draw[] (0.5,0.8333333333333334) -- (0.5,1);
\draw (0.5,0.9166666666666666) node[scale=1.2000000000000002,inner sep=0pt,shape=circle,minimum size=3pt,fill=black] {};
\draw (0.5,0.9166666666666666) node[scale=1.2000000000000002,inner sep=0pt,right=3.8pt] {$t$};
\draw (0.5,0.9166666666666666) node[scale=1.2000000000000002,inner sep=0pt,left=8pt,above=-3.3pt] {$t^{\unaryminus 1}$};
\draw[] (1,0.3333333333333333) -- (1.5,0.16666666666666666);
\draw[] (1.5,0.16666666666666666) -- (1.5,0);
\draw[] (1,0.6666666666666666) -- (1.5,0.8333333333333334);
\draw[] (1.5,0.8333333333333334) -- (1.5,1);
\draw (1.5,0.9166666666666666) node[scale=1.2000000000000002,inner sep=0pt,shape=circle,minimum size=3pt,fill=black] {};
\draw (1.5,0.9166666666666666) node[scale=1.2000000000000002,inner sep=0pt,right=3.8pt] {$t$};
\draw (1.5,0.9166666666666666) node[scale=1.2000000000000002,inner sep=0pt,left=8pt,above=-3.3pt] {$t^{\unaryminus 1}$};
\draw[] (1,0.3333333333333333) -- (2.5,0.16666666666666666);
\draw[] (2.5,0.16666666666666666) -- (2.5,0);
\draw[] (1,0.6666666666666666) -- (2.5,0.8333333333333334);
\draw[] (2.5,0.8333333333333334) -- (2.5,1);
\draw (2.5,0.9166666666666666) node[scale=1.2000000000000002,inner sep=0pt,shape=circle,minimum size=3pt,fill=black] {};
\draw (2.5,0.9166666666666666) node[scale=1.2000000000000002,inner sep=0pt,right=3.8pt] {$t$};
\draw (2.5,0.9166666666666666) node[scale=1.2000000000000002,inner sep=0pt,left=8pt,above=-3.3pt] {$t^{\unaryminus 1}$};
\draw[] (1,0.3333333333333333) -- (3.5,0.16666666666666666);
\draw[] (3.5,0.16666666666666666) -- (3.5,0);
\draw[] (1,0.6666666666666666) -- (3.5,0.8333333333333334);
\draw[] (3.5,0.8333333333333334) -- (3.5,1);
\draw (3.5,0.9166666666666666) node[scale=1.2000000000000002,inner sep=0pt,shape=circle,minimum size=3pt,fill=black] {};
\draw (3.5,0.9166666666666666) node[scale=1.2000000000000002,inner sep=0pt,right=3.8pt] {$t$};
\draw (3.5,0.9166666666666666) node[scale=1.2000000000000002,inner sep=0pt,left=8pt,above=-3.3pt] {$t^{\unaryminus 1}$};
\draw (1,0.5) node[scale=1.2000000000000002,inner sep=0pt,shape=circle,minimum size=3pt,fill=black] {};
\draw (1,0.5) node[scale=1.2000000000000002,inner sep=0pt,left=4pt,above=3pt] {$t$};
\draw (1,0.5) node[scale=1.2000000000000002,inner sep=0pt,left=4pt,below=3pt] {$t$};
\draw (1,0.5) node[scale=1.2000000000000002,inner sep=0pt,right=8.5pt,above=3pt] {$t^{\unaryminus 1}$};
\draw (1,0.5) node[scale=1.2000000000000002,inner sep=0pt,right=8.5pt,below=1pt] {$t^{\unaryminus 1}$};
\draw[] (1,0.5) -- (0.25,0.5);
\draw[] (0.25,0.5) -- (0.25,0);
\draw[] (1,0.5) -- (3.75,0.5);
\draw[] (3.75,0.5) -- (3.75,0);
\draw[] (1.25,1) -- (1.25,0.9583333333333334);
\draw[] (1.25,0.9583333333333334) -- (0.75,0.9583333333333334);
\draw[] (0.75,0.9583333333333334) -- (0.75,1);
\draw[] (3.25,1) -- (3.25,0.9583333333333334);
\draw[] (3.25,0.9583333333333334) -- (2.75,0.9583333333333334);
\draw[] (2.75,0.9583333333333334) -- (2.75,1);
\draw[] (1.75,0) -- (1.75,0.041666666666666664);
\draw[] (1.75,0.041666666666666664) -- (2.25,0.041666666666666664);
\draw[] (2.25,0.041666666666666664) -- (2.25,0);
\draw (2.5,0.5) node[scale=1.2000000000000002,inner sep=0pt,shape=circle,minimum size=3pt,fill=black] {};
\draw (2.5,0.5) node[scale=1.2000000000000002,inner sep=0pt,below=3pt] {$a$};
\draw (2.5,0.5) node[scale=1.2000000000000002,inner sep=0pt,above=7.5pt,right=-3pt] {$a^{\unaryminus 1}$};

\end{tikzpicture}%
            \hspace{0.3cm}%
            \input{images/D_11_5.tikz}%
        }
    }
    
    \subfloat{
        \resizebox{0.9\textwidth}{!}{\input{images/D_23_5.tikz}}
    }
    \caption{$D_{+4,3}$, $D_{+6,5}$, $D_{11,5} = D_{+6,5} \dsumIII D_{5,5}$ and $D_{23, 5}$.}
    \label{fig:D_+N+1}
\end{figure}

\pagebreak

We define
$$D_{rN + 2s + 1, N} = D_{+N+1, N} \dsumIII D_{(r-1)N + 2s, N}\text{.}$$
See Fig.~\ref{fig:D_+N+1} for examples. Since $D_{(r-1)N + 2s, N}$ has one face, the diagram $D_{+N+1, N}$ has two faces and all $t$-arcs of $D_{+N+1, N}$ are adjacent to these two faces, we obtain that $D_{rN + 2s + 1, N}$ has one face. Its label is 
$$[a, t]^{\frac{N+1}{2} + \frac{N+1}{2} + (r-1)N + 2s} = [a, t]^{rN + 2s + 1}\text{.}$$
The genus of $D_{rN + 2s + 1, N}$ is 
\begin{gather*}
\frac{N-1}{2} + \intpart*{\frac{(r-1)N + 2s}{2}} - \intpart*{\frac{(r-1)N + 2s}{N}} + 1
=\\
=
\intpart*{\frac{(r-1)N + 2s}{2} + \frac{N + 1}{2}} - \intpart*{\frac{(r-1)N + 2s}{N} + 1} + 1
=\\
=
\intpart*{\frac{rN + 2s + 1}{2}} - \intpart*{\frac{rN + 2s + 1}{N}} + 1  
\end{gather*}
if $2s + 1 < N$. All the vertices of $D_{rN+2s+1, N}$ are interior if $a^N=1$.

\vspace{0.35cm}
\noindent
\textbf{Case 3. $\boldsymbol{n=rN+2s}$, where $\boldsymbol{N}$ is even}
\vspace{0.05cm}

\noindent
A diagram for this case is obtained from the diagram $D_{2,2}$ shown in Fig.~\ref{fig:D_2_2}. Its genus is $1$ and it has one face with label $[a, t]^2$. We define 
\begin{gather*}
D_{N, N} = D_{2, 2} \dsumI \underbrace{D_{3, 3} \dsumI \ldots \dsumI D_{3, 3}}_{(N - 2)\mathbin{/} 2 \text{ summands}}\text{,} \\
D_{rN, N} = D_{N, N} \dsumIIr \underbrace{D_{N-1, N-1} \dsumIIr \ldots \dsumIIr D_{N-1, N-1}}_{r-1 \text{ summands}}\text{,} \\
D_{rN+2s, N} = D_{rN, N} \dsumIII \underbrace{D_{+2} \dsumIII \ldots \dsumIII D_{+2}}_{s \text{ summands}}\text{.}
\end{gather*}
See Fig.~\ref{fig:D_2_2} for examples. Since $D_{2, 2}$ has one face, the diagram $D_{rN+2s, N}$ also has one face. The label of this face is $[a, t]^{rN+2s}$. The genus of $D_{rN+2s, N}$ is 
$$
\intpart*{\frac{rN + 2s}{2}} - \intpart*{\frac{rN + 2s}{N}} + 1
$$
if $2s < N$. It is computed analogously to the previous cases. All the vertices of $D_{rN+2s, N}$ are interior if $a^N=1$.

\vspace{0.35cm}
\noindent
\textbf{Case 4. $\boldsymbol{n=rN+2s+1}$, where $\boldsymbol{N}$ is even}
\vspace{0.05cm}

\noindent
A diagram for this case is obtained from the diagram $D_{3,2}$ shown in Fig.~\ref{fig:D_3_2}. Its genus is $1$ and it has one face with the label $[a, t]^3$. We define 
\begin{gather*}
D_{N+1, N} = D_{3, 2} \dsumI \underbrace{D_{3, 3} \dsumI \ldots \dsumI D_{3, 3}}_{(N - 2)\mathbin{/} 2 \text{ summands}}\text{,} \\
D_{rN+1, N} = D_{N+1, N} \dsumIIr \underbrace{D_{N-1, N-1} \dsumIIr \ldots \dsumIIr D_{N-1, N-1}}_{r-1 \text{ summands}}\text{,} \\
D_{rN+2s+1, N} = D_{rN+1, N} \dsumIII \underbrace{D_{+2} \dsumIII \ldots \dsumIII D_{+2}}_{s \text{ summands}}\text{.}
\end{gather*}
See Fig.~\ref{fig:D_3_2} for examples. The diagram~$D_{rN+2s+1, N}$ is exactly the same as the diagram~$D_{rN+2s, N}$, we should only replace $D_{2, 2}$ with $D_{3, 2}$. So it is easy to see that the label of its face is $[a, t]^{rN+2s+1}$ and it has the same genus as $D_{rN+2s, N}$:
$$
\intpart*{\frac{rN + 2s}{2}} - \intpart*{\frac{rN + 2s}{N}} + 1 = \intpart*{\frac{rN + 2s + 1}{2}} - \intpart*{\frac{rN + 2s + 1}{N}} + 1
$$
if $2s+1 < N$. All the vertices of $D_{rN+2s+1, N}$ are interior if $a^N=1$.

\begin{figure}[!h]
    \centering
    \bigskip
    \subfloat{
        \resizebox{0.9\textwidth}{!}{%
            \begin{tikzpicture}[xscale=1.5,yscale=6]
\providecommand{\unaryminus}{\scalebox{0.5}[0.75]{\( - \)}}
\fontsize{10}{12}\selectfont
\newcommand*{\DrN}{%
\draw[thick] (0,1) -- (2,1);
\draw[thick] (2,0) -- (0,0);
\draw[dashed, thick] (0,0) -- (0,1);
\draw[dashed, thick] (2,1) -- (2,0);
\draw (0,0) node[scale=1.2000000000000002,inner sep=0pt,shape=isosceles triangle,minimum size=3pt,fill=black] {};
\draw (0,1) node[scale=1.2000000000000002,inner sep=0pt,shape=isosceles triangle,minimum size=3pt,fill=black] {};
\draw (1,0) node[scale=1.2000000000000002,inner sep=0pt,shape=isosceles triangle,minimum size=3pt,fill=black] {};
\draw (1,1) node[scale=1.2000000000000002,inner sep=0pt,shape=isosceles triangle,minimum size=3pt,fill=black] {};
\draw (2,0) node[scale=1.2000000000000002,inner sep=0pt,shape=isosceles triangle,minimum size=3pt,fill=black] {};
\draw (2,1) node[scale=1.2000000000000002,inner sep=0pt,shape=isosceles triangle,minimum size=3pt,fill=black] {};
\draw (0.5,0) node[scale=1.2000000000000002,inner sep=0pt,minimum size=14pt,below=0pt] {$e_{2}$};
\draw (0.5,1) node[scale=1.2000000000000002,inner sep=0pt,minimum size=14pt,above=0pt] {$e_{1}$};
\draw (1.5,0) node[scale=1.2000000000000002,inner sep=0pt,minimum size=14pt,below=0pt] {$e_{1}$};
\draw (1.5,1) node[scale=1.2000000000000002,inner sep=0pt,minimum size=14pt,above=0pt] {$e_{2}$};
\draw (1.5,0.3333333333333333) node[scale=1.2000000000000002,inner sep=0pt,shape=circle,minimum size=3pt,fill=black] {};
\draw (1.5,0.3333333333333333) node[scale=1.2000000000000002,inner sep=0pt,below=6pt,left=1pt] {$A^{-}$};
\draw (1.5,0.6666666666666666) node[scale=1.2000000000000002,inner sep=0pt,shape=circle,minimum size=3pt,fill=black] {};
\draw (1.5,0.6666666666666666) node[scale=1.2000000000000002,inner sep=0pt,above=7pt,left=1pt] {$A^{+}$};
\draw[] (0.5,1) -- (0.5,0.6666666666666666);
\draw[] (0.5,0.6666666666666666) -- (1,0.5);
\draw[] (1,0.5) -- (1.5,0.6666666666666666);
\draw[] (1.5,0.6666666666666666) -- (1.5,1);
\draw[] (0.5,0) -- (0.5,0.3333333333333333);
\draw[] (0.5,0.3333333333333333) -- (1,0.5);
\draw[] (1,0.5) -- (1.5,0.3333333333333333);
\draw[] (1.5,0.3333333333333333) -- (1.5,0);
\draw (1,0.5) node[scale=1.2000000000000002,inner sep=0pt,shape=circle,minimum size=3pt,fill=black] {};
\draw (1,0.5) node[scale=1.2000000000000002,inner sep=0pt,left=6pt] {$t$};
\draw (1,0.5) node[scale=1.2000000000000002,inner sep=0pt,right=6pt] {$t$};
\draw (1,0.5) node[scale=1.2000000000000002,inner sep=0pt,right=2pt,above=8.246211251235327pt] {$t^{\unaryminus 1}$};
\draw (1,0.5) node[scale=1.2000000000000002,inner sep=0pt,below=8.246211251235327pt] {$t^{\unaryminus 1}$};
\draw[] (1.5,0) -- (1.5,0.16666666666666666);
}


\begin{scope}[shift={(0,0)}]
    \DrN
\end{scope}

\end{tikzpicture}%
            \hspace{0.3cm}%
            \begin{tikzpicture}[xscale=1.5,yscale=6]
\providecommand{\unaryminus}{\scalebox{0.5}[0.75]{\( - \)}}
\fontsize{10}{12}\selectfont
\newcommand*{\DrN}{%
\draw[thick] (0,1) -- (4,1);
\draw[thick] (4,0) -- (0,0);
\draw[dashed, thick] (0,0) -- (0,1);
\draw[dashed, thick] (4,1) -- (4,0);
\draw (0,0) node[scale=1.2000000000000002,inner sep=0pt,shape=isosceles triangle,minimum size=3pt,fill=black] {};
\draw (0,1) node[scale=1.2000000000000002,inner sep=0pt,shape=isosceles triangle,minimum size=3pt,fill=black] {};
\draw (1,0) node[scale=1.2000000000000002,inner sep=0pt,shape=isosceles triangle,minimum size=3pt,fill=black] {};
\draw (1,1) node[scale=1.2000000000000002,inner sep=0pt,shape=isosceles triangle,minimum size=3pt,fill=black] {};
\draw (2,0) node[scale=1.2000000000000002,inner sep=0pt,shape=isosceles triangle,minimum size=3pt,fill=black] {};
\draw (2,1) node[scale=1.2000000000000002,inner sep=0pt,shape=isosceles triangle,minimum size=3pt,fill=black] {};
\draw (3,0) node[scale=1.2000000000000002,inner sep=0pt,shape=isosceles triangle,minimum size=3pt,fill=black] {};
\draw (3,1) node[scale=1.2000000000000002,inner sep=0pt,shape=isosceles triangle,minimum size=3pt,fill=black] {};
\draw (4,0) node[scale=1.2000000000000002,inner sep=0pt,shape=isosceles triangle,minimum size=3pt,fill=black] {};
\draw (4,1) node[scale=1.2000000000000002,inner sep=0pt,shape=isosceles triangle,minimum size=3pt,fill=black] {};
\draw (0.5,0) node[scale=1.2000000000000002,inner sep=0pt,minimum size=14pt,below=0pt] {$e_{2}$};
\draw (0.5,1) node[scale=1.2000000000000002,inner sep=0pt,minimum size=14pt,above=0pt] {$e_{1}$};
\draw (1.5,0) node[scale=1.2000000000000002,inner sep=0pt,minimum size=14pt,below=0pt] {$e_{1}$};
\draw (1.5,1) node[scale=1.2000000000000002,inner sep=0pt,minimum size=14pt,above=0pt] {$e_{2}$};
\draw (2.5,0) node[scale=1.2000000000000002,inner sep=0pt,minimum size=14pt,below=0pt] {$e_{4}$};
\draw (2.5,1) node[scale=1.2000000000000002,inner sep=0pt,minimum size=14pt,above=0pt] {$e_{3}$};
\draw (3.5,0) node[scale=1.2000000000000002,inner sep=0pt,minimum size=14pt,below=0pt] {$e_{3}$};
\draw (3.5,1) node[scale=1.2000000000000002,inner sep=0pt,minimum size=14pt,above=0pt] {$e_{4}$};
\draw (1.5,0.3333333333333333) node[scale=1.2000000000000002,inner sep=0pt,shape=circle,minimum size=3pt,fill=black] {};
\draw (1.5,0.3333333333333333) node[scale=1.2000000000000002,inner sep=0pt,below=6pt,left=1pt] {$A^{-}$};
\draw (1.5,0.6666666666666666) node[scale=1.2000000000000002,inner sep=0pt,shape=circle,minimum size=3pt,fill=black] {};
\draw (1.5,0.6666666666666666) node[scale=1.2000000000000002,inner sep=0pt,above=7pt,left=1pt] {$A^{+}$};
\draw[] (0.5,1) -- (0.5,0.6666666666666666);
\draw[] (0.5,0.6666666666666666) -- (1,0.5);
\draw[] (1,0.5) -- (1.5,0.6666666666666666);
\draw[] (1.5,0.6666666666666666) -- (1.5,1);
\draw[] (0.5,0) -- (0.5,0.3333333333333333);
\draw[] (0.5,0.3333333333333333) -- (1,0.5);
\draw[] (1,0.5) -- (1.5,0.3333333333333333);
\draw[] (1.5,0.3333333333333333) -- (1.5,0);
\draw (1,0.5) node[scale=1.2000000000000002,inner sep=0pt,shape=circle,minimum size=3pt,fill=black] {};
\draw (1,0.5) node[scale=1.2000000000000002,inner sep=0pt,left=6pt] {$t$};
\draw (1,0.5) node[scale=1.2000000000000002,inner sep=0pt,right=6pt] {$t$};
\draw (1,0.5) node[scale=1.2000000000000002,inner sep=0pt,right=2pt,above=8.246211251235327pt] {$t^{\unaryminus 1}$};
\draw (1,0.5) node[scale=1.2000000000000002,inner sep=0pt,below=8.246211251235327pt] {$t^{\unaryminus 1}$};
\draw[] (2.5,1) -- (2.5,0.8333333333333334);
\draw[] (2.5,0.8333333333333334) -- (1.5,0.6666666666666666);
\draw[] (2.5,0) -- (2.5,0.16666666666666666);
\draw[] (2.5,0.16666666666666666) -- (1.5,0.3333333333333333);
\draw (2.5,0.9166666666666666) node[scale=1.2000000000000002,inner sep=0pt,shape=circle,minimum size=3pt,fill=black] {};
\draw (2.5,0.9166666666666666) node[scale=1.2000000000000002,inner sep=0pt,left=3.8pt] {$t$};
\draw (2.5,0.9166666666666666) node[scale=1.2000000000000002,inner sep=0pt,right=10pt,above=-3.3pt] {$t^{\unaryminus 1}$};
\draw[] (3.5,0) -- (3.5,0.16666666666666666);
\draw[] (3.5,0.16666666666666666) -- (1.5,0.3333333333333333);
\draw[] (3.5,1) -- (3.5,0.8333333333333334);
\draw[] (3.5,0.8333333333333334) -- (1.5,0.6666666666666666);
\draw (3.5,0.9166666666666666) node[scale=1.2000000000000002,inner sep=0pt,shape=circle,minimum size=3pt,fill=black] {};
\draw (3.5,0.9166666666666666) node[scale=1.2000000000000002,inner sep=0pt,left=3.8pt] {$t$};
\draw (3.5,0.9166666666666666) node[scale=1.2000000000000002,inner sep=0pt,right=10pt,above=-3.3pt] {$t^{\unaryminus 1}$};
}


\begin{scope}[shift={(0,0)}]
    \DrN
\end{scope}

\end{tikzpicture}%
            \hspace{0.3cm}%
            \begin{tikzpicture}[xscale=1.5,yscale=6]
\providecommand{\unaryminus}{\scalebox{0.5}[0.75]{\( - \)}}
\fontsize{10}{12}\selectfont
\newcommand*{\DrN}{%
\draw[thick] (0,1) -- (6,1);
\draw[thick] (6,0) -- (0,0);
\draw[dashed, thick] (0,0) -- (0,1);
\draw[dashed, thick] (6,1) -- (6,0);
\draw (0,0) node[scale=1.2000000000000002,inner sep=0pt,shape=isosceles triangle,minimum size=3pt,fill=black] {};
\draw (0,1) node[scale=1.2000000000000002,inner sep=0pt,shape=isosceles triangle,minimum size=3pt,fill=black] {};
\draw (1,0) node[scale=1.2000000000000002,inner sep=0pt,shape=isosceles triangle,minimum size=3pt,fill=black] {};
\draw (1,1) node[scale=1.2000000000000002,inner sep=0pt,shape=isosceles triangle,minimum size=3pt,fill=black] {};
\draw (2,0) node[scale=1.2000000000000002,inner sep=0pt,shape=isosceles triangle,minimum size=3pt,fill=black] {};
\draw (2,1) node[scale=1.2000000000000002,inner sep=0pt,shape=isosceles triangle,minimum size=3pt,fill=black] {};
\draw (3,0) node[scale=1.2000000000000002,inner sep=0pt,shape=isosceles triangle,minimum size=3pt,fill=black] {};
\draw (3,1) node[scale=1.2000000000000002,inner sep=0pt,shape=isosceles triangle,minimum size=3pt,fill=black] {};
\draw (4,0) node[scale=1.2000000000000002,inner sep=0pt,shape=isosceles triangle,minimum size=3pt,fill=black] {};
\draw (4,1) node[scale=1.2000000000000002,inner sep=0pt,shape=isosceles triangle,minimum size=3pt,fill=black] {};
\draw (5,0) node[scale=1.2000000000000002,inner sep=0pt,shape=isosceles triangle,minimum size=3pt,fill=black] {};
\draw (5,1) node[scale=1.2000000000000002,inner sep=0pt,shape=isosceles triangle,minimum size=3pt,fill=black] {};
\draw (6,0) node[scale=1.2000000000000002,inner sep=0pt,shape=isosceles triangle,minimum size=3pt,fill=black] {};
\draw (6,1) node[scale=1.2000000000000002,inner sep=0pt,shape=isosceles triangle,minimum size=3pt,fill=black] {};
\draw (0.5,0) node[scale=1.2000000000000002,inner sep=0pt,minimum size=14pt,below=0pt] {$e_{2}$};
\draw (0.5,1) node[scale=1.2000000000000002,inner sep=0pt,minimum size=14pt,above=0pt] {$e_{1}$};
\draw (1.5,0) node[scale=1.2000000000000002,inner sep=0pt,minimum size=14pt,below=0pt] {$e_{1}$};
\draw (1.5,1) node[scale=1.2000000000000002,inner sep=0pt,minimum size=14pt,above=0pt] {$e_{2}$};
\draw (2.5,0) node[scale=1.2000000000000002,inner sep=0pt,minimum size=14pt,below=0pt] {$e_{4}$};
\draw (2.5,1) node[scale=1.2000000000000002,inner sep=0pt,minimum size=14pt,above=0pt] {$e_{3}$};
\draw (3.5,0) node[scale=1.2000000000000002,inner sep=0pt,minimum size=14pt,below=0pt] {$e_{3}$};
\draw (3.5,1) node[scale=1.2000000000000002,inner sep=0pt,minimum size=14pt,above=0pt] {$e_{4}$};
\draw (4.5,0) node[scale=1.2000000000000002,inner sep=0pt,minimum size=14pt,below=0pt] {$e_{6}$};
\draw (4.5,1) node[scale=1.2000000000000002,inner sep=0pt,minimum size=14pt,above=0pt] {$e_{5}$};
\draw (5.5,0) node[scale=1.2000000000000002,inner sep=0pt,minimum size=14pt,below=0pt] {$e_{5}$};
\draw (5.5,1) node[scale=1.2000000000000002,inner sep=0pt,minimum size=14pt,above=0pt] {$e_{6}$};
\draw (1.5,0.3333333333333333) node[scale=1.2000000000000002,inner sep=0pt,shape=circle,minimum size=3pt,fill=black] {};
\draw (1.5,0.3333333333333333) node[scale=1.2000000000000002,inner sep=0pt,below=6pt,left=1pt] {$A^{-}$};
\draw (1.5,0.6666666666666666) node[scale=1.2000000000000002,inner sep=0pt,shape=circle,minimum size=3pt,fill=black] {};
\draw (1.5,0.6666666666666666) node[scale=1.2000000000000002,inner sep=0pt,above=7pt,left=1pt] {$A^{+}$};
\draw[] (0.5,1) -- (0.5,0.6666666666666666);
\draw[] (0.5,0.6666666666666666) -- (1,0.5);
\draw[] (1,0.5) -- (1.5,0.6666666666666666);
\draw[] (1.5,0.6666666666666666) -- (1.5,1);
\draw[] (0.5,0) -- (0.5,0.3333333333333333);
\draw[] (0.5,0.3333333333333333) -- (1,0.5);
\draw[] (1,0.5) -- (1.5,0.3333333333333333);
\draw[] (1.5,0.3333333333333333) -- (1.5,0);
\draw (1,0.5) node[scale=1.2000000000000002,inner sep=0pt,shape=circle,minimum size=3pt,fill=black] {};
\draw (1,0.5) node[scale=1.2000000000000002,inner sep=0pt,left=6pt] {$t$};
\draw (1,0.5) node[scale=1.2000000000000002,inner sep=0pt,right=6pt] {$t$};
\draw (1,0.5) node[scale=1.2000000000000002,inner sep=0pt,right=2pt,above=8.246211251235327pt] {$t^{\unaryminus 1}$};
\draw (1,0.5) node[scale=1.2000000000000002,inner sep=0pt,below=8.246211251235327pt] {$t^{\unaryminus 1}$};
\draw[] (2.5,1) -- (2.5,0.8333333333333334);
\draw[] (2.5,0.8333333333333334) -- (1.5,0.6666666666666666);
\draw[] (2.5,0) -- (2.5,0.16666666666666666);
\draw[] (2.5,0.16666666666666666) -- (1.5,0.3333333333333333);
\draw (2.5,0.9166666666666666) node[scale=1.2000000000000002,inner sep=0pt,shape=circle,minimum size=3pt,fill=black] {};
\draw (2.5,0.9166666666666666) node[scale=1.2000000000000002,inner sep=0pt,left=3.8pt] {$t$};
\draw (2.5,0.9166666666666666) node[scale=1.2000000000000002,inner sep=0pt,right=10pt,above=-3.3pt] {$t^{\unaryminus 1}$};
\draw[] (3.5,0) -- (3.5,0.16666666666666666);
\draw[] (3.5,1) -- (3.5,0.8333333333333334);
\draw[] (3.5,0.8333333333333334) -- (1.5,0.6666666666666666);
\draw (3.5,0.9166666666666666) node[scale=1.2000000000000002,inner sep=0pt,shape=circle,minimum size=3pt,fill=black] {};
\draw (3.5,0.9166666666666666) node[scale=1.2000000000000002,inner sep=0pt,left=3.8pt] {$t$};
\draw (3.5,0.9166666666666666) node[scale=1.2000000000000002,inner sep=0pt,right=10pt,above=-3.3pt] {$t^{\unaryminus 1}$};
\draw[] (1.5,0.3333333333333333) -- (3,0.3333333333333333);
\draw[] (3,0.3333333333333333) -- (4,0.5);
\draw[] (3.5,0.16666666666666666) -- (4,0.2222222222222222);
\draw (5,0.3333333333333333) node[scale=1.2000000000000002,inner sep=0pt,shape=circle,minimum size=3pt,fill=black] {};
\draw (5,0.3333333333333333) node[scale=1.2000000000000002,inner sep=0pt,below=9.895453501482391pt] {$A^{-}$};
\draw (5,0.6666666666666666) node[scale=1.2000000000000002,inner sep=0pt,shape=circle,minimum size=3pt,fill=black] {};
\draw (5,0.6666666666666666) node[scale=1.2000000000000002,inner sep=0pt,above=14.895453501482391pt,right=-4pt] {$A^{+}$};
\draw[] (5,0.3333333333333333) -- (5,0.6666666666666666);
\draw (5,0.5) node[scale=1.2000000000000002,inner sep=0pt,shape=circle,minimum size=3pt,fill=black] {};
\draw (5,0.5) node[scale=1.2000000000000002,inner sep=0pt,right=3.8pt] {$t$};
\draw (5,0.5) node[scale=1.2000000000000002,inner sep=0pt,left=8pt,above=-3.3pt] {$t^{\unaryminus 1}$};
\draw[] (5,0.3333333333333333) -- (4.5,0.16666666666666666);
\draw[] (4.5,0.16666666666666666) -- (4.5,0);
\draw[] (5,0.6666666666666666) -- (4.5,0.8333333333333334);
\draw[] (4.5,0.8333333333333334) -- (4.5,1);
\draw (4.5,0.9166666666666666) node[scale=1.2000000000000002,inner sep=0pt,shape=circle,minimum size=3pt,fill=black] {};
\draw (4.5,0.9166666666666666) node[scale=1.2000000000000002,inner sep=0pt,left=3.8pt] {$t$};
\draw (4.5,0.9166666666666666) node[scale=1.2000000000000002,inner sep=0pt,right=10pt,above=-3.3pt] {$t^{\unaryminus 1}$};
\draw[] (5,0.6666666666666666) -- (4,0.5);
\draw[] (5,0.3333333333333333) -- (4,0.2222222222222222);
\draw (4,0.5) node[scale=1.2000000000000002,inner sep=0pt,shape=circle,minimum size=3pt,fill=black] {};
\draw (4,0.5) node[scale=1.2000000000000002,inner sep=0pt,right=2.496150883013531pt,below=3.7442263245202967pt] {$t$};
\draw (4,0.5) node[scale=1.2000000000000002,inner sep=0pt,left=0.49615088301353083pt,above=3.7442263245202967pt,] {$t^{\unaryminus 1}$};
\draw[] (5,0.3333333333333333) -- (5.5,0.16666666666666666);
\draw[] (5.5,0.16666666666666666) -- (5.5,0);
\draw[] (5,0.6666666666666666) -- (5.5,0.8333333333333334);
\draw[] (5.5,0.8333333333333334) -- (5.5,1);
\draw (5.5,0.9166666666666666) node[scale=1.2000000000000002,inner sep=0pt,shape=circle,minimum size=3pt,fill=black] {};
\draw (5.5,0.9166666666666666) node[scale=1.2000000000000002,inner sep=0pt,left=3.8pt] {$t$};
\draw (5.5,0.9166666666666666) node[scale=1.2000000000000002,inner sep=0pt,right=10pt,above=-3.3pt] {$t^{\unaryminus 1}$};
}


\begin{scope}[shift={(0,0)}]
    \DrN
\end{scope}

\end{tikzpicture}%
        }
    }
    
    \subfloat{
        \resizebox{0.9\textwidth}{!}{%
            \begin{tikzpicture}[xscale=1.5,yscale=6]
\providecommand{\unaryminus}{\scalebox{0.5}[0.75]{\( - \)}}
\fontsize{10}{12}\selectfont
\newcommand*{\DrN}{%
\draw[thick] (0,1) -- (4,1);
\draw[thick] (4,0) -- (0,0);
\draw[dashed, thick] (0,0) -- (0,1);
\draw (0,0) node[scale=1.2000000000000002,inner sep=0pt,shape=isosceles triangle,minimum size=3pt,fill=black] {};
\draw (0,1) node[scale=1.2000000000000002,inner sep=0pt,shape=isosceles triangle,minimum size=3pt,fill=black] {};
\draw (1,0) node[scale=1.2000000000000002,inner sep=0pt,shape=isosceles triangle,minimum size=3pt,fill=black] {};
\draw (1,1) node[scale=1.2000000000000002,inner sep=0pt,shape=isosceles triangle,minimum size=3pt,fill=black] {};
\draw (2,0) node[scale=1.2000000000000002,inner sep=0pt,shape=isosceles triangle,minimum size=3pt,fill=black] {};
\draw (2,1) node[scale=1.2000000000000002,inner sep=0pt,shape=isosceles triangle,minimum size=3pt,fill=black] {};
\draw (3,0) node[scale=1.2000000000000002,inner sep=0pt,shape=isosceles triangle,minimum size=3pt,fill=black] {};
\draw (3,1) node[scale=1.2000000000000002,inner sep=0pt,shape=isosceles triangle,minimum size=3pt,fill=black] {};
\draw (4,0) node[scale=1.2000000000000002,inner sep=0pt,shape=isosceles triangle,minimum size=3pt,fill=black] {};
\draw (4,1) node[scale=1.2000000000000002,inner sep=0pt,shape=isosceles triangle,minimum size=3pt,fill=black] {};
\draw (0.5,0) node[scale=1.2000000000000002,inner sep=0pt,minimum size=14pt,below=0pt] {$e_{2}$};
\draw (0.5,1) node[scale=1.2000000000000002,inner sep=0pt,minimum size=14pt,above=0pt] {$e_{1}$};
\draw (1.5,0) node[scale=1.2000000000000002,inner sep=0pt,minimum size=14pt,below=0pt] {$e_{1}$};
\draw (1.5,1) node[scale=1.2000000000000002,inner sep=0pt,minimum size=14pt,above=0pt] {$e_{2}$};
\draw (2.5,0) node[scale=1.2000000000000002,inner sep=0pt,minimum size=14pt,below=0pt] {$e_{4}$};
\draw (2.5,1) node[scale=1.2000000000000002,inner sep=0pt,minimum size=14pt,above=0pt] {$e_{3}$};
\draw (3.5,0) node[scale=1.2000000000000002,inner sep=0pt,minimum size=14pt,below=0pt] {$e_{3}$};
\draw (3.5,1) node[scale=1.2000000000000002,inner sep=0pt,minimum size=14pt,above=0pt] {$e_{4}$};
\draw (1.5,0.3333333333333333) node[scale=1.2000000000000002,inner sep=0pt,shape=circle,minimum size=3pt,fill=black] {};
\draw (1.5,0.3333333333333333) node[scale=1.2000000000000002,inner sep=0pt,below=6pt,left=1pt] {$A^{-}$};
\draw (1.5,0.6666666666666666) node[scale=1.2000000000000002,inner sep=0pt,shape=circle,minimum size=3pt,fill=black] {};
\draw (1.5,0.6666666666666666) node[scale=1.2000000000000002,inner sep=0pt,above=7pt,left=1pt] {$A^{+}$};
\draw[] (0.5,1) -- (0.5,0.6666666666666666);
\draw[] (0.5,0.6666666666666666) -- (1,0.5);
\draw[] (1,0.5) -- (1.5,0.6666666666666666);
\draw[] (1.5,0.6666666666666666) -- (1.5,1);
\draw[] (0.5,0) -- (0.5,0.3333333333333333);
\draw[] (0.5,0.3333333333333333) -- (1,0.5);
\draw[] (1,0.5) -- (1.5,0.3333333333333333);
\draw[] (1.5,0.3333333333333333) -- (1.5,0);
\draw (1,0.5) node[scale=1.2000000000000002,inner sep=0pt,shape=circle,minimum size=3pt,fill=black] {};
\draw (1,0.5) node[scale=1.2000000000000002,inner sep=0pt,left=6pt] {$t$};
\draw (1,0.5) node[scale=1.2000000000000002,inner sep=0pt,right=6pt] {$t$};
\draw (1,0.5) node[scale=1.2000000000000002,inner sep=0pt,right=2pt,above=8.246211251235327pt] {$t^{\unaryminus 1}$};
\draw (1,0.5) node[scale=1.2000000000000002,inner sep=0pt,below=8.246211251235327pt] {$t^{\unaryminus 1}$};
\draw[] (2.5,1) -- (2.5,0.8333333333333334);
\draw[] (2.5,0.8333333333333334) -- (1.5,0.6666666666666666);
\draw[] (2.5,0) -- (2.5,0.16666666666666666);
\draw[] (2.5,0.16666666666666666) -- (1.5,0.3333333333333333);
\draw (2.5,0.9166666666666666) node[scale=1.2000000000000002,inner sep=0pt,shape=circle,minimum size=3pt,fill=black] {};
\draw (2.5,0.9166666666666666) node[scale=1.2000000000000002,inner sep=0pt,left=3.8pt] {$t$};
\draw (2.5,0.9166666666666666) node[scale=1.2000000000000002,inner sep=0pt,right=10pt,above=-3.3pt] {$t^{\unaryminus 1}$};
\draw[] (3.5,0) -- (3.5,0.16666666666666666);
\draw[] (3.5,0.16666666666666666) -- (1.5,0.3333333333333333);
\draw[] (4,0.9583333333333334) -- (3.5,0.9583333333333334);
\draw[] (3.5,0.9583333333333334) -- (3.5,1);
\draw[] (1.5,0.6666666666666666) -- (3.75,0.6666666666666666);
\draw[] (3.75,0.6666666666666666) -- (4,0.5);
\draw (3.5,0.6666666666666666) node[scale=1.2000000000000002,inner sep=0pt,shape=circle,minimum size=3pt,fill=black] {};
\draw (3.5,0.6666666666666666) node[scale=1.2000000000000002,inner sep=0pt,above=3pt] {$t$};
\draw (3.5,0.6666666666666666) node[scale=1.2000000000000002,inner sep=0pt,below=5pt,right=-2.3pt] {$t^{\unaryminus 1}$};
}
\newcommand*{\Dplustwos}{%
\draw[thick] (0,1) -- (2,1);
\draw[thick] (2,0) -- (0,0);
\draw[dashed, thick] (2,1) -- (2,0);
\draw (0,0) node[scale=1.2000000000000002,inner sep=0pt,shape=isosceles triangle,minimum size=3pt,fill=black] {};
\draw (0,1) node[scale=1.2000000000000002,inner sep=0pt,shape=isosceles triangle,minimum size=3pt,fill=black] {};
\draw (1,0) node[scale=1.2000000000000002,inner sep=0pt,shape=isosceles triangle,minimum size=3pt,fill=black] {};
\draw (1,1) node[scale=1.2000000000000002,inner sep=0pt,shape=isosceles triangle,minimum size=3pt,fill=black] {};
\draw (2,0) node[scale=1.2000000000000002,inner sep=0pt,shape=isosceles triangle,minimum size=3pt,fill=black] {};
\draw (2,1) node[scale=1.2000000000000002,inner sep=0pt,shape=isosceles triangle,minimum size=3pt,fill=black] {};
\draw (0.5,0) node[scale=1.2000000000000002,inner sep=0pt,minimum size=14pt,below=0pt] {$e_{6}$};
\draw (0.5,1) node[scale=1.2000000000000002,inner sep=0pt,minimum size=14pt,above=0pt] {$e_{5}$};
\draw (1.5,0) node[scale=1.2000000000000002,inner sep=0pt,minimum size=14pt,below=0pt] {$e_{5}$};
\draw (1.5,1) node[scale=1.2000000000000002,inner sep=0pt,minimum size=14pt,above=0pt] {$e_{6}$};
\draw[] (0,0.9583333333333334) -- (0.25,0.9583333333333334);
\draw[] (0.25,0.9583333333333334) -- (0.25,1);
\draw[] (0,0.5) -- (0.25,0.3333333333333333);
\draw (1,0.3333333333333333) node[scale=1.2000000000000002,inner sep=0pt,shape=circle,minimum size=3pt,fill=black] {};
\draw (1,0.3333333333333333) node[scale=1.2000000000000002,inner sep=0pt,right=6pt,above=3pt] {$a$};
\draw (1,0.3333333333333333) node[scale=1.2000000000000002,inner sep=0pt,right=0pt,below=6.123105625617663pt] {$a$};
\draw (1,0.3333333333333333) node[scale=1.2000000000000002,inner sep=0pt,left=7pt,above=3pt] {$a^{\unaryminus 1}$};
\draw (1,0.3333333333333333) node[scale=1.2000000000000002,inner sep=0pt,left=12.80776406404415pt,below=1pt] {$a^{\unaryminus 1}$};
\draw (1,0.6666666666666666) node[scale=1.2000000000000002,inner sep=0pt,shape=circle,minimum size=3pt,fill=black] {};
\draw (1,0.6666666666666666) node[scale=1.2000000000000002,inner sep=0pt,right=10.05397531527947pt,above=2pt] {$t$};
\draw (1,0.6666666666666666) node[scale=1.2000000000000002,inner sep=0pt,left=0pt,above=7.123105625617663pt] {$t$};
\draw (1,0.6666666666666666) node[scale=1.2000000000000002,inner sep=0pt,right=9pt,below=2.5pt] {$t^{\unaryminus 1}$};
\draw (1,0.6666666666666666) node[scale=1.2000000000000002,inner sep=0pt,left=7pt,below=2.5pt] {$t^{\unaryminus 1}$};
\draw[] (1,0.3333333333333333) -- (1,0.6666666666666666);
\draw[] (1,0.6666666666666666) -- (0.5,0.8333333333333334);
\draw[] (0.5,0.8333333333333334) -- (0.5,1);
\draw[] (1,0.3333333333333333) -- (0.5,0.16666666666666666);
\draw[] (0.5,0.16666666666666666) -- (0.5,0);
\draw[] (1,0.6666666666666666) -- (1.5,0.8333333333333334);
\draw[] (1.5,0.8333333333333334) -- (1.5,1);
\draw[] (1,0.3333333333333333) -- (1.5,0.16666666666666666);
\draw[] (1.5,0.16666666666666666) -- (1.5,0);
\draw[] (0.25,0.3333333333333333) -- (1,0.3333333333333333);
\draw[] (1.75,0.6666666666666666) -- (1,0.6666666666666666);
\draw[] (0.75,0) -- (0.75,0.041666666666666664);
\draw[] (0.75,0.041666666666666664) -- (1.25,0.041666666666666664);
\draw[] (1.25,0.041666666666666664) -- (1.25,0);
\draw[] (1.75,0.6666666666666666) -- (1.75,1);
}


\begin{scope}[shift={(0,0)}]
    \DrN
\end{scope}

\begin{scope}[shift={(4,0)}]
    \Dplustwos
\end{scope}

\end{tikzpicture}%
            \hspace{0.2cm}%
            \input{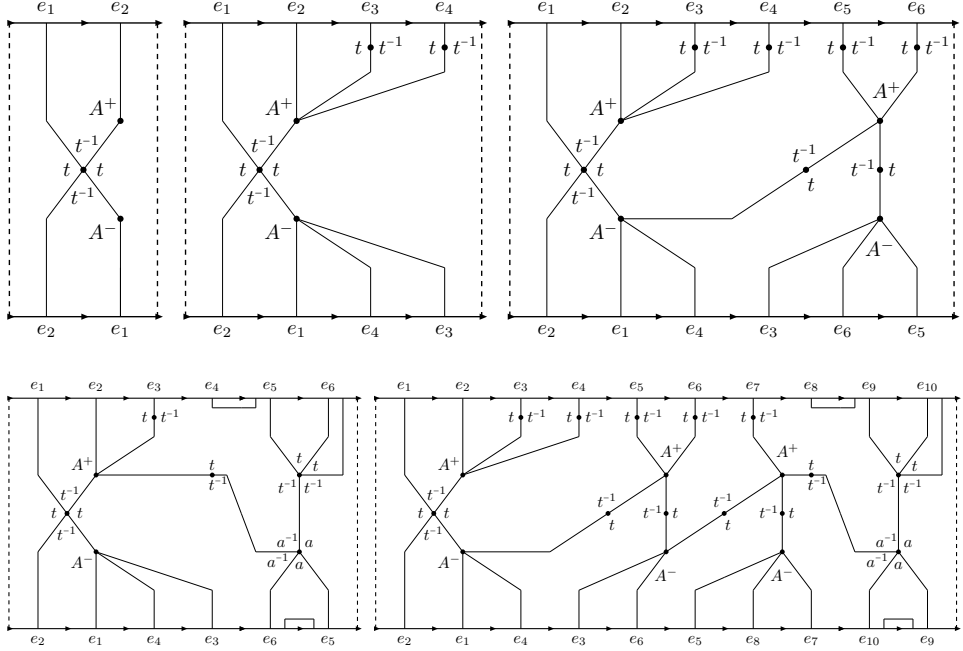}%
        }
    }
    \caption{$D_{2,2}$, $D_{4,4}$, $D_{8,4}$, $D_{6,4}$ and $D_{14,4}$.}
    \label{fig:D_2_2}
\end{figure}
\begin{figure}[!h]
    \centering
    \resizebox{0.9\textwidth}{!}{%
        \begin{tikzpicture}[xscale=1.5,yscale=6]
\providecommand{\unaryminus}{\scalebox{0.5}[0.75]{\( - \)}}
\fontsize{10}{12}\selectfont
\newcommand*{\DrN}{%
\draw[thick] (0,1) -- (2,1);
\draw[thick] (2,0) -- (0,0);
\draw[dashed, thick] (0,0) -- (0,1);
\draw[dashed, thick] (2,1) -- (2,0);
\draw (0,0) node[scale=1.2000000000000002,inner sep=0pt,shape=isosceles triangle,minimum size=3pt,fill=black] {};
\draw (0,1) node[scale=1.2000000000000002,inner sep=0pt,shape=isosceles triangle,minimum size=3pt,fill=black] {};
\draw (1,0) node[scale=1.2000000000000002,inner sep=0pt,shape=isosceles triangle,minimum size=3pt,fill=black] {};
\draw (1,1) node[scale=1.2000000000000002,inner sep=0pt,shape=isosceles triangle,minimum size=3pt,fill=black] {};
\draw (2,0) node[scale=1.2000000000000002,inner sep=0pt,shape=isosceles triangle,minimum size=3pt,fill=black] {};
\draw (2,1) node[scale=1.2000000000000002,inner sep=0pt,shape=isosceles triangle,minimum size=3pt,fill=black] {};
\draw (0.5,0) node[scale=1.2000000000000002,inner sep=0pt,minimum size=14pt,below=0pt] {$e_{2}$};
\draw (0.5,1) node[scale=1.2000000000000002,inner sep=0pt,minimum size=14pt,above=0pt] {$e_{1}$};
\draw (1.5,0) node[scale=1.2000000000000002,inner sep=0pt,minimum size=14pt,below=0pt] {$e_{1}$};
\draw (1.5,1) node[scale=1.2000000000000002,inner sep=0pt,minimum size=14pt,above=0pt] {$e_{2}$};
\draw (1.5,0.3333333333333333) node[scale=1.2000000000000002,inner sep=0pt,shape=circle,minimum size=3pt,fill=black] {};
\draw (1.5,0.3333333333333333) node[scale=1.2000000000000002,inner sep=0pt,above=7pt,left=1pt] {$A^{-}$};
\draw (1.5,0.6666666666666666) node[scale=1.2000000000000002,inner sep=0pt,shape=circle,minimum size=3pt,fill=black] {};
\draw (1.5,0.6666666666666666) node[scale=1.2000000000000002,inner sep=0pt,above=7pt,left=1pt] {$A^{+}$};
\draw (1.5,0.5) node[scale=1.2000000000000002,inner sep=0pt,shape=circle,minimum size=3pt,fill=black] {};
\draw (1.5,0.5) node[scale=1.2000000000000002,inner sep=0pt,right=3.8pt] {$t$};
\draw (1.5,0.5) node[scale=1.2000000000000002,inner sep=0pt,left=8pt,above=-3.3pt] {$t^{\unaryminus 1}$};
\draw (0.5,0.6666666666666666) node[scale=1.2000000000000002,inner sep=0pt,shape=circle,minimum size=3pt,fill=black] {};
\draw (0.5,0.6666666666666666) node[scale=1.2000000000000002,inner sep=0pt,right=4pt] {$a$};
\draw (0.5,0.6666666666666666) node[scale=1.2000000000000002,inner sep=0pt,above=1.9pt,left=1.5pt] {$a^{\unaryminus 1}$};
\draw[] (0.5,1) -- (0.5,0.3333333333333333);
\draw[] (0.5,0.3333333333333333) -- (0.6666666666666667,0.3333333333333333);
\draw[] (0.6666666666666667,0.3333333333333333) -- (1.3333333333333333,0.16666666666666666);
\draw[] (1.3333333333333333,0.16666666666666666) -- (1.5,0.16666666666666666);
\draw[] (1.5,0.16666666666666666) -- (1.5,0);
\draw[] (1.5,1) -- (1.5,0.3333333333333333);
\draw[] (1.5,0.3333333333333333) -- (1.3333333333333333,0.3333333333333333);
\draw[] (1.3333333333333333,0.3333333333333333) -- (0.6666666666666667,0.16666666666666666);
\draw[] (0.6666666666666667,0.16666666666666666) -- (0.5,0.16666666666666666);
\draw[] (0.5,0.16666666666666666) -- (0.5,0);
\draw (1,0.25) node[scale=1.2000000000000002,inner sep=0pt,shape=circle,minimum size=3pt,fill=black] {};
\draw (1,0.25) node[scale=1.2000000000000002,inner sep=0pt,above=0.4pt,left=6.5pt] {$t$};
\draw (1,0.25) node[scale=1.2000000000000002,inner sep=0pt,above=5.5pt] {$t$};
\draw (1,0.25) node[scale=1.2000000000000002,inner sep=0pt,above=1.4pt,right=6pt] {$t^{\unaryminus 1}$};
\draw (1,0.25) node[scale=1.2000000000000002,inner sep=0pt,below=5pt] {$t^{\unaryminus 1}$};
\draw[] (1.5,0) -- (1.5,0.16666666666666666);
}


\begin{scope}[shift={(0,0)}]
    \DrN
\end{scope}

\end{tikzpicture}%
        \hspace{0.3cm}%
        \begin{tikzpicture}[xscale=1.5,yscale=6]
\providecommand{\unaryminus}{\scalebox{0.5}[0.75]{\( - \)}}
\fontsize{10}{12}\selectfont
\newcommand*{\DrN}{%
\draw[thick] (0,1) -- (4,1);
\draw[thick] (4,0) -- (0,0);
\draw[dashed, thick] (0,0) -- (0,1);
\draw[dashed, thick] (4,1) -- (4,0);
\draw (0,0) node[scale=1.2000000000000002,inner sep=0pt,shape=isosceles triangle,minimum size=3pt,fill=black] {};
\draw (0,1) node[scale=1.2000000000000002,inner sep=0pt,shape=isosceles triangle,minimum size=3pt,fill=black] {};
\draw (1,0) node[scale=1.2000000000000002,inner sep=0pt,shape=isosceles triangle,minimum size=3pt,fill=black] {};
\draw (1,1) node[scale=1.2000000000000002,inner sep=0pt,shape=isosceles triangle,minimum size=3pt,fill=black] {};
\draw (2,0) node[scale=1.2000000000000002,inner sep=0pt,shape=isosceles triangle,minimum size=3pt,fill=black] {};
\draw (2,1) node[scale=1.2000000000000002,inner sep=0pt,shape=isosceles triangle,minimum size=3pt,fill=black] {};
\draw (3,0) node[scale=1.2000000000000002,inner sep=0pt,shape=isosceles triangle,minimum size=3pt,fill=black] {};
\draw (3,1) node[scale=1.2000000000000002,inner sep=0pt,shape=isosceles triangle,minimum size=3pt,fill=black] {};
\draw (4,0) node[scale=1.2000000000000002,inner sep=0pt,shape=isosceles triangle,minimum size=3pt,fill=black] {};
\draw (4,1) node[scale=1.2000000000000002,inner sep=0pt,shape=isosceles triangle,minimum size=3pt,fill=black] {};
\draw (0.5,0) node[scale=1.2000000000000002,inner sep=0pt,minimum size=14pt,below=0pt] {$e_{2}$};
\draw (0.5,1) node[scale=1.2000000000000002,inner sep=0pt,minimum size=14pt,above=0pt] {$e_{1}$};
\draw (1.5,0) node[scale=1.2000000000000002,inner sep=0pt,minimum size=14pt,below=0pt] {$e_{1}$};
\draw (1.5,1) node[scale=1.2000000000000002,inner sep=0pt,minimum size=14pt,above=0pt] {$e_{2}$};
\draw (2.5,0) node[scale=1.2000000000000002,inner sep=0pt,minimum size=14pt,below=0pt] {$e_{4}$};
\draw (2.5,1) node[scale=1.2000000000000002,inner sep=0pt,minimum size=14pt,above=0pt] {$e_{3}$};
\draw (3.5,0) node[scale=1.2000000000000002,inner sep=0pt,minimum size=14pt,below=0pt] {$e_{3}$};
\draw (3.5,1) node[scale=1.2000000000000002,inner sep=0pt,minimum size=14pt,above=0pt] {$e_{4}$};
\draw (1.5,0.3333333333333333) node[scale=1.2000000000000002,inner sep=0pt,shape=circle,minimum size=3pt,fill=black] {};
\draw (1.5,0.3333333333333333) node[scale=1.2000000000000002,inner sep=0pt,above=7pt,left=1pt] {$A^{-}$};
\draw (1.5,0.6666666666666666) node[scale=1.2000000000000002,inner sep=0pt,shape=circle,minimum size=3pt,fill=black] {};
\draw (1.5,0.6666666666666666) node[scale=1.2000000000000002,inner sep=0pt,above=7pt,left=1pt] {$A^{+}$};
\draw (1.5,0.5) node[scale=1.2000000000000002,inner sep=0pt,shape=circle,minimum size=3pt,fill=black] {};
\draw (1.5,0.5) node[scale=1.2000000000000002,inner sep=0pt,right=3.8pt] {$t$};
\draw (1.5,0.5) node[scale=1.2000000000000002,inner sep=0pt,left=8pt,above=-3.3pt] {$t^{\unaryminus 1}$};
\draw (0.5,0.6666666666666666) node[scale=1.2000000000000002,inner sep=0pt,shape=circle,minimum size=3pt,fill=black] {};
\draw (0.5,0.6666666666666666) node[scale=1.2000000000000002,inner sep=0pt,right=4pt] {$a$};
\draw (0.5,0.6666666666666666) node[scale=1.2000000000000002,inner sep=0pt,above=1.9pt,left=1.5pt] {$a^{\unaryminus 1}$};
\draw[] (0.5,1) -- (0.5,0.3333333333333333);
\draw[] (0.5,0.3333333333333333) -- (0.6666666666666667,0.3333333333333333);
\draw[] (0.6666666666666667,0.3333333333333333) -- (1.3333333333333333,0.16666666666666666);
\draw[] (1.3333333333333333,0.16666666666666666) -- (1.5,0.16666666666666666);
\draw[] (1.5,0.16666666666666666) -- (1.5,0);
\draw[] (1.5,1) -- (1.5,0.3333333333333333);
\draw[] (1.5,0.3333333333333333) -- (1.3333333333333333,0.3333333333333333);
\draw[] (1.3333333333333333,0.3333333333333333) -- (0.6666666666666667,0.16666666666666666);
\draw[] (0.6666666666666667,0.16666666666666666) -- (0.5,0.16666666666666666);
\draw[] (0.5,0.16666666666666666) -- (0.5,0);
\draw (1,0.25) node[scale=1.2000000000000002,inner sep=0pt,shape=circle,minimum size=3pt,fill=black] {};
\draw (1,0.25) node[scale=1.2000000000000002,inner sep=0pt,above=0.4pt,left=6.5pt] {$t$};
\draw (1,0.25) node[scale=1.2000000000000002,inner sep=0pt,above=5.5pt] {$t$};
\draw (1,0.25) node[scale=1.2000000000000002,inner sep=0pt,above=1.4pt,right=6pt] {$t^{\unaryminus 1}$};
\draw (1,0.25) node[scale=1.2000000000000002,inner sep=0pt,below=5pt] {$t^{\unaryminus 1}$};
\draw[] (2.5,1) -- (2.5,0.8333333333333334);
\draw[] (2.5,0.8333333333333334) -- (1.5,0.6666666666666666);
\draw[] (2.5,0) -- (2.5,0.16666666666666666);
\draw[] (2.5,0.16666666666666666) -- (1.5,0.3333333333333333);
\draw (2.5,0.9166666666666666) node[scale=1.2000000000000002,inner sep=0pt,shape=circle,minimum size=3pt,fill=black] {};
\draw (2.5,0.9166666666666666) node[scale=1.2000000000000002,inner sep=0pt,left=3.8pt] {$t$};
\draw (2.5,0.9166666666666666) node[scale=1.2000000000000002,inner sep=0pt,right=10pt,above=-3.3pt] {$t^{\unaryminus 1}$};
\draw[] (3.5,0) -- (3.5,0.16666666666666666);
\draw[] (3.5,0.16666666666666666) -- (1.5,0.3333333333333333);
\draw[] (3.5,1) -- (3.5,0.8333333333333334);
\draw[] (3.5,0.8333333333333334) -- (1.5,0.6666666666666666);
\draw (3.5,0.9166666666666666) node[scale=1.2000000000000002,inner sep=0pt,shape=circle,minimum size=3pt,fill=black] {};
\draw (3.5,0.9166666666666666) node[scale=1.2000000000000002,inner sep=0pt,left=3.8pt] {$t$};
\draw (3.5,0.9166666666666666) node[scale=1.2000000000000002,inner sep=0pt,right=10pt,above=-3.3pt] {$t^{\unaryminus 1}$};
}


\begin{scope}[shift={(0,0)}]
    \DrN
\end{scope}

\end{tikzpicture}%
        \hspace{0.3cm}%
        \begin{tikzpicture}[xscale=1.5,yscale=6]
\providecommand{\unaryminus}{\scalebox{0.5}[0.75]{\( - \)}}
\fontsize{10}{12}\selectfont
\newcommand*{\DrN}{%
\draw[thick] (0,1) -- (6,1);
\draw[thick] (6,0) -- (0,0);
\draw[dashed, thick] (0,0) -- (0,1);
\draw[dashed, thick] (6,1) -- (6,0);
\draw (0,0) node[scale=1.2000000000000002,inner sep=0pt,shape=isosceles triangle,minimum size=3pt,fill=black] {};
\draw (0,1) node[scale=1.2000000000000002,inner sep=0pt,shape=isosceles triangle,minimum size=3pt,fill=black] {};
\draw (1,0) node[scale=1.2000000000000002,inner sep=0pt,shape=isosceles triangle,minimum size=3pt,fill=black] {};
\draw (1,1) node[scale=1.2000000000000002,inner sep=0pt,shape=isosceles triangle,minimum size=3pt,fill=black] {};
\draw (2,0) node[scale=1.2000000000000002,inner sep=0pt,shape=isosceles triangle,minimum size=3pt,fill=black] {};
\draw (2,1) node[scale=1.2000000000000002,inner sep=0pt,shape=isosceles triangle,minimum size=3pt,fill=black] {};
\draw (3,0) node[scale=1.2000000000000002,inner sep=0pt,shape=isosceles triangle,minimum size=3pt,fill=black] {};
\draw (3,1) node[scale=1.2000000000000002,inner sep=0pt,shape=isosceles triangle,minimum size=3pt,fill=black] {};
\draw (4,0) node[scale=1.2000000000000002,inner sep=0pt,shape=isosceles triangle,minimum size=3pt,fill=black] {};
\draw (4,1) node[scale=1.2000000000000002,inner sep=0pt,shape=isosceles triangle,minimum size=3pt,fill=black] {};
\draw (5,0) node[scale=1.2000000000000002,inner sep=0pt,shape=isosceles triangle,minimum size=3pt,fill=black] {};
\draw (5,1) node[scale=1.2000000000000002,inner sep=0pt,shape=isosceles triangle,minimum size=3pt,fill=black] {};
\draw (6,0) node[scale=1.2000000000000002,inner sep=0pt,shape=isosceles triangle,minimum size=3pt,fill=black] {};
\draw (6,1) node[scale=1.2000000000000002,inner sep=0pt,shape=isosceles triangle,minimum size=3pt,fill=black] {};
\draw (0.5,0) node[scale=1.2000000000000002,inner sep=0pt,minimum size=14pt,below=0pt] {$e_{2}$};
\draw (0.5,1) node[scale=1.2000000000000002,inner sep=0pt,minimum size=14pt,above=0pt] {$e_{1}$};
\draw (1.5,0) node[scale=1.2000000000000002,inner sep=0pt,minimum size=14pt,below=0pt] {$e_{1}$};
\draw (1.5,1) node[scale=1.2000000000000002,inner sep=0pt,minimum size=14pt,above=0pt] {$e_{2}$};
\draw (2.5,0) node[scale=1.2000000000000002,inner sep=0pt,minimum size=14pt,below=0pt] {$e_{4}$};
\draw (2.5,1) node[scale=1.2000000000000002,inner sep=0pt,minimum size=14pt,above=0pt] {$e_{3}$};
\draw (3.5,0) node[scale=1.2000000000000002,inner sep=0pt,minimum size=14pt,below=0pt] {$e_{3}$};
\draw (3.5,1) node[scale=1.2000000000000002,inner sep=0pt,minimum size=14pt,above=0pt] {$e_{4}$};
\draw (4.5,0) node[scale=1.2000000000000002,inner sep=0pt,minimum size=14pt,below=0pt] {$e_{6}$};
\draw (4.5,1) node[scale=1.2000000000000002,inner sep=0pt,minimum size=14pt,above=0pt] {$e_{5}$};
\draw (5.5,0) node[scale=1.2000000000000002,inner sep=0pt,minimum size=14pt,below=0pt] {$e_{5}$};
\draw (5.5,1) node[scale=1.2000000000000002,inner sep=0pt,minimum size=14pt,above=0pt] {$e_{6}$};
\draw (1.5,0.3333333333333333) node[scale=1.2000000000000002,inner sep=0pt,shape=circle,minimum size=3pt,fill=black] {};
\draw (1.5,0.3333333333333333) node[scale=1.2000000000000002,inner sep=0pt,above=7pt,left=1pt] {$A^{-}$};
\draw (1.5,0.6666666666666666) node[scale=1.2000000000000002,inner sep=0pt,shape=circle,minimum size=3pt,fill=black] {};
\draw (1.5,0.6666666666666666) node[scale=1.2000000000000002,inner sep=0pt,above=7pt,left=1pt] {$A^{+}$};
\draw (1.5,0.5) node[scale=1.2000000000000002,inner sep=0pt,shape=circle,minimum size=3pt,fill=black] {};
\draw (1.5,0.5) node[scale=1.2000000000000002,inner sep=0pt,right=3.8pt] {$t$};
\draw (1.5,0.5) node[scale=1.2000000000000002,inner sep=0pt,left=8pt,above=-3.3pt] {$t^{\unaryminus 1}$};
\draw (0.5,0.6666666666666666) node[scale=1.2000000000000002,inner sep=0pt,shape=circle,minimum size=3pt,fill=black] {};
\draw (0.5,0.6666666666666666) node[scale=1.2000000000000002,inner sep=0pt,right=4pt] {$a$};
\draw (0.5,0.6666666666666666) node[scale=1.2000000000000002,inner sep=0pt,above=1.9pt,left=1.5pt] {$a^{\unaryminus 1}$};
\draw[] (0.5,1) -- (0.5,0.3333333333333333);
\draw[] (0.5,0.3333333333333333) -- (0.6666666666666667,0.3333333333333333);
\draw[] (0.6666666666666667,0.3333333333333333) -- (1.3333333333333333,0.16666666666666666);
\draw[] (1.3333333333333333,0.16666666666666666) -- (1.5,0.16666666666666666);
\draw[] (1.5,0.16666666666666666) -- (1.5,0);
\draw[] (1.5,1) -- (1.5,0.3333333333333333);
\draw[] (1.5,0.3333333333333333) -- (1.3333333333333333,0.3333333333333333);
\draw[] (1.3333333333333333,0.3333333333333333) -- (0.6666666666666667,0.16666666666666666);
\draw[] (0.6666666666666667,0.16666666666666666) -- (0.5,0.16666666666666666);
\draw[] (0.5,0.16666666666666666) -- (0.5,0);
\draw (1,0.25) node[scale=1.2000000000000002,inner sep=0pt,shape=circle,minimum size=3pt,fill=black] {};
\draw (1,0.25) node[scale=1.2000000000000002,inner sep=0pt,above=0.4pt,left=6.5pt] {$t$};
\draw (1,0.25) node[scale=1.2000000000000002,inner sep=0pt,above=5.5pt] {$t$};
\draw (1,0.25) node[scale=1.2000000000000002,inner sep=0pt,above=1.4pt,right=6pt] {$t^{\unaryminus 1}$};
\draw (1,0.25) node[scale=1.2000000000000002,inner sep=0pt,below=5pt] {$t^{\unaryminus 1}$};
\draw[] (2.5,1) -- (2.5,0.8333333333333334);
\draw[] (2.5,0.8333333333333334) -- (1.5,0.6666666666666666);
\draw[] (2.5,0) -- (2.5,0.16666666666666666);
\draw[] (2.5,0.16666666666666666) -- (1.5,0.3333333333333333);
\draw (2.5,0.9166666666666666) node[scale=1.2000000000000002,inner sep=0pt,shape=circle,minimum size=3pt,fill=black] {};
\draw (2.5,0.9166666666666666) node[scale=1.2000000000000002,inner sep=0pt,left=3.8pt] {$t$};
\draw (2.5,0.9166666666666666) node[scale=1.2000000000000002,inner sep=0pt,right=10pt,above=-3.3pt] {$t^{\unaryminus 1}$};
\draw[] (3.5,0) -- (3.5,0.16666666666666666);
\draw[] (3.5,1) -- (3.5,0.8333333333333334);
\draw[] (3.5,0.8333333333333334) -- (1.5,0.6666666666666666);
\draw (3.5,0.9166666666666666) node[scale=1.2000000000000002,inner sep=0pt,shape=circle,minimum size=3pt,fill=black] {};
\draw (3.5,0.9166666666666666) node[scale=1.2000000000000002,inner sep=0pt,left=3.8pt] {$t$};
\draw (3.5,0.9166666666666666) node[scale=1.2000000000000002,inner sep=0pt,right=10pt,above=-3.3pt] {$t^{\unaryminus 1}$};
\draw[] (1.5,0.3333333333333333) -- (3,0.3333333333333333);
\draw[] (3,0.3333333333333333) -- (4,0.5);
\draw[] (3.5,0.16666666666666666) -- (4,0.2222222222222222);
\draw (5,0.3333333333333333) node[scale=1.2000000000000002,inner sep=0pt,shape=circle,minimum size=3pt,fill=black] {};
\draw (5,0.3333333333333333) node[scale=1.2000000000000002,inner sep=0pt,below=9.895453501482391pt] {$A^{-}$};
\draw (5,0.6666666666666666) node[scale=1.2000000000000002,inner sep=0pt,shape=circle,minimum size=3pt,fill=black] {};
\draw (5,0.6666666666666666) node[scale=1.2000000000000002,inner sep=0pt,above=14.895453501482391pt,right=-4pt] {$A^{+}$};
\draw[] (5,0.3333333333333333) -- (5,0.6666666666666666);
\draw (5,0.5) node[scale=1.2000000000000002,inner sep=0pt,shape=circle,minimum size=3pt,fill=black] {};
\draw (5,0.5) node[scale=1.2000000000000002,inner sep=0pt,right=3.8pt] {$t$};
\draw (5,0.5) node[scale=1.2000000000000002,inner sep=0pt,left=8pt,above=-3.3pt] {$t^{\unaryminus 1}$};
\draw[] (5,0.3333333333333333) -- (4.5,0.16666666666666666);
\draw[] (4.5,0.16666666666666666) -- (4.5,0);
\draw[] (5,0.6666666666666666) -- (4.5,0.8333333333333334);
\draw[] (4.5,0.8333333333333334) -- (4.5,1);
\draw (4.5,0.9166666666666666) node[scale=1.2000000000000002,inner sep=0pt,shape=circle,minimum size=3pt,fill=black] {};
\draw (4.5,0.9166666666666666) node[scale=1.2000000000000002,inner sep=0pt,left=3.8pt] {$t$};
\draw (4.5,0.9166666666666666) node[scale=1.2000000000000002,inner sep=0pt,right=10pt,above=-3.3pt] {$t^{\unaryminus 1}$};
\draw[] (5,0.6666666666666666) -- (4,0.5);
\draw[] (5,0.3333333333333333) -- (4,0.2222222222222222);
\draw (4,0.5) node[scale=1.2000000000000002,inner sep=0pt,shape=circle,minimum size=3pt,fill=black] {};
\draw (4,0.5) node[scale=1.2000000000000002,inner sep=0pt,right=2.496150883013531pt,below=3.7442263245202967pt] {$t$};
\draw (4,0.5) node[scale=1.2000000000000002,inner sep=0pt,left=0.49615088301353083pt,above=3.7442263245202967pt,] {$t^{\unaryminus 1}$};
\draw[] (5,0.3333333333333333) -- (5.5,0.16666666666666666);
\draw[] (5.5,0.16666666666666666) -- (5.5,0);
\draw[] (5,0.6666666666666666) -- (5.5,0.8333333333333334);
\draw[] (5.5,0.8333333333333334) -- (5.5,1);
\draw (5.5,0.9166666666666666) node[scale=1.2000000000000002,inner sep=0pt,shape=circle,minimum size=3pt,fill=black] {};
\draw (5.5,0.9166666666666666) node[scale=1.2000000000000002,inner sep=0pt,left=3.8pt] {$t$};
\draw (5.5,0.9166666666666666) node[scale=1.2000000000000002,inner sep=0pt,right=10pt,above=-3.3pt] {$t^{\unaryminus 1}$};
}


\begin{scope}[shift={(0,0)}]
    \DrN
\end{scope}

\end{tikzpicture}%
    }
    \caption{$D_{3, 2}$, $D_{5, 4}$ and $D_{9, 4}$.}
    \label{fig:D_3_2}
\end{figure}
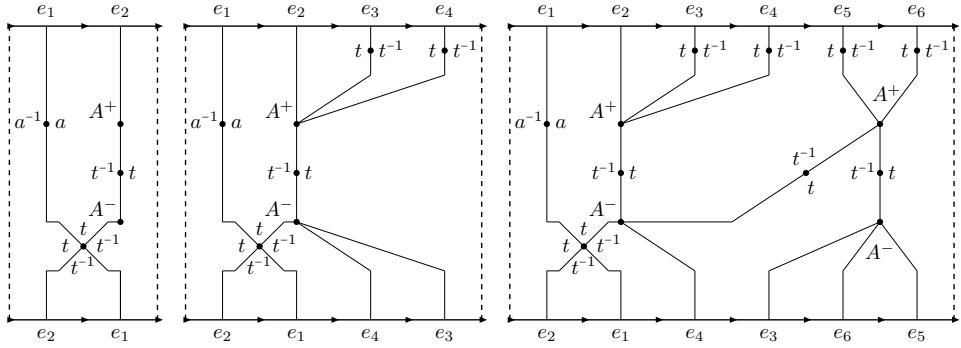

We considered all the cases and thus Lemma~\ref{D_n_N_lemma} is proved.

\end{proof}

\section{Proofs of the theorems} \label{proof_section}

\begin{proof}[Proof of Theorem~\ref{a_t_theorem}]
Let
$$
\hat{k}(n, N) = \intpart*{\frac{n}{2}} - \intpart*{\frac{n}{N}} + 1 \text{.} 
$$
We want to prove that $\cl([a, t]^n) = \hat{k}(n, \ord(a))$ if $a \in A_{j_1}$ and $t \in A_{j_2}$ are two nonidentity elements such that $\ord(t) \geq \ord(a)$ and $j_1 \neq j_2$. Let us denote $\ord(a)$ by $N$. Main theorem from~\cite{BK22} states that $\cl([a, t]^n) \geq \hat{k}(n, N)$. Hence it is sufficient to show that
$$\cl([a, t]^n) \leq \hat{k}(n, N)\text{.}$$
If $n < N$, then it follows from Culler's examples (in particular, if $N$ is infinite). If $N=2$, then it follows because 
$$[a, t]^n = [a, t^{(-1)^{n+1}} a t^{(-1)^{n}} \ldots a t^{(-1)^{2}}]\text{.}$$

Thus we assume that $N \geq 3$ and $n \geq N$. If $N$ is even or $N$ and $n$ are odd, then the desired inequality follows from Lemma~\ref{D_n_N_lemma} and Lemma~\ref{diagram_lemma}. If $N$ is odd and $n$ is even, let us consider two cases:
\begin{enumerate}
    \item $n=rN+2s$, where $r$ is even and $s$ is such that $0 \leq 2s < N$. Lemma~\ref{D_n_N_lemma} and Lemma~\ref{diagram_lemma} imply that $\cl([a, t]^{(r-1)N+2s}) \leq \hat{k}((r-1)N+2s, N)$ and $\cl([a, t]^{N}) \leq \hat{k}(N, N)$. Hence
    \begin{align*}
        \cl([a, t]&^{rN+2s}) \leq \cl([a, t]^{(r-1)N+2s}) + \cl([a, t]^{N})
        \leq \\
        &\leq \hat{k}((r-1)N+2s, N) + \hat{k}(N, N)
        = \\
        &= \frac{(r-1)N + 2s - 1}{2} - (r-1) + 1 + \frac{N-1}{2} - 1 + 1
        = \\
        &= \frac{rN + 2s}{2} - r + 1 = \hat{k}(rN+2s, N)\text{.}
    \end{align*}

    \item $n=rN+2s+1$, where $r$ is odd and $s$ is such that $0 \leq 2s+1 < N$. Lemma~\ref{D_n_N_lemma} and Lemma~\ref{diagram_lemma} imply that $\cl([a, t]^{rN}) \leq \hat{k}(rN, N)$. Moreover, $\cl([a, t]^{2s+1}) = s+1$ since $2s+1 < N$. Hence
    \begin{multline*}
        \cl([a, t]^{rN+2s+1})
        \leq
        \cl([a, t]^{rN}) + \cl([a, t]^{2s+1})
        \leq
        \hat{k}(rN, N) + s + 1
        = \\
        =
        \frac{rN - 1}{2} - r + 1 + s + 1
        =
        \frac{rN + 2s + 1}{2} - r + 1
        =
        \hat{k}(rN+2s+1, N)\text{.}
    \end{multline*}
\end{enumerate}
\end{proof}

\begin{proof}[Proof of Theorem~\ref{general_theorem}]
If $N \in \{ N(g) \mid g \in \GG \}$, then there are elements $a \in A_{j_1}$ and $t \in A_{j_2}$ such that $\ord(a)=N$, $\ord(t)\geq N$ and $j_1 \neq j_2$. Main theorem from~\cite{BK22} implies that $k(G, n, N) \geq \hat{k}(n, N)$ and Theorem~\ref{a_t_theorem} implies that $k(G, n, N) \leq \hat{k}(n, N)$ since $N([a, t])=N$. Thus $k(G, n, N) = \hat{k}(n, N)$.
\end{proof}

\begin{proof}[Proof of Theorem~\ref{main_theorem}]
There are elements $a \in A_{j_1}$ and $t \in A_{j_2}$ such that $\ord(a)=N(G)$, $\ord(t)\geq N(G)$ and $j_1 \neq j_2$. Theorem 1 from~\cite{BK22} implies that $k(G, n) \geq \hat{k}(n, N(G))$ and Theorem~\ref{a_t_theorem} implies that $k(G, n) \leq \hat{k}(n, N(G))$. Thus $k(G, n) = \hat{k}(n, N(G))$.

\end{proof}

\vspace{0.5cm}

\begin{spacing}{1.05}
{\small
\noindent
\textbf{\upshape Acknowledgments.} The author thanks his advisor, Anton Klyachko, for helpful discussions and remarks, Olga Kulikova for a valuable observation which helped the author to come to the results presented in this paper, Pavel Izmailov for proofreading the text and an anonymous referee for helpful comments. The author also thanks the Theoretical Physics and Mathematics Advancement Foundation “BASIS”.}
\end{spacing}

\begin{spacing}{1.1}
\printbibliography

@article{Ch18,
    author={Chen, L.},
    title={Spectral gap of scl in free products},
    year={2018},
    Journal={Proc. Amer. Math. Soc.},
    volume={146},
    number={7},
    pages={3143--3151},
    DOI={10/htck},
    arXiv={some ses}
}

@article{CCE91,
    author={Comerford, J. A. and Comerford Jr., L. P. and Edmunds, C. C.},
    title={Powers as products of commutators},
    year={1991},
    Journal={Comm. Algebra},
    volume={19},
    number={2},
    pages={675--684},
    publisher={Taylor & Francis},
    DOI={10/bb2qvb},
}

@article{Cull81,
    title = {Using surfaces to solve equations in free groups},
    journal = {Topology},
    volume = {20},
    number = {2},
    pages = {133--145},
    year = {1981},
    DOI = {10/cvcqkk},
    author = {Culler, M.}
}

@article{DH91,
    author={Duncan, A. J. and Howie, J.},
    title={The genus problem for one-relator products of locally indicable groups},
    year={1991},
    Journal={Math. Z.},
    volume={208},
    number={1},
    pages={225--237},
    DOI={10/b7cfpr}
}

@article{IK18,
    author={Ivanov, S. V. and Klyachko, A. A.},
    title={Quasiperiodic and mixed commutator factorizations in free products of groups},
    year={2018},
    Journal={Bull. London Math. Soc.},
    volume={50},
    number={5},
    pages={832--844},
    DOI={10/gfc6bq}
}

@article{Sch59,
    author={Sch\"utzenberger, M. P.},
    title={Sur l'equation $a^{2+n}=b^{2+m}c^{2+p}$ dans
un groupe libre},
    year={1959},
    Journal={C. R. Acad. Sci.  Paris S\'er. I Math.},
    volume={248},
    pages={2435--2436}
}

@article{BK22,
    author={Bereznyuk, V. Yu. and Klyachko, A. A.},
    title={Commutator length of powers in free products of groups},
    year={2022},
    journal={Proc. Edinburgh Math. Soc.},
    volume={65},
    number={1},
    pages={102--119},
    DOI={10/htcj},
    publisher={Cambridge University Press}
}

@article{How83,
    title={The solution of length three equations over groups},
    volume={26},
    DOI={10/d9jmjz},
    number={1},
    journal={Proc. Edinburgh Math. Soc.},
    publisher={Cambridge University Press},
    author={Howie, J.},
    year={1983},
    pages={89--96}
}

@article{Kl93,
author = {Klyachko, A. A.},
title = {A funny property of sphere and equations over groups},
journal = {Comm. Algebra},
volume = {21},
number = {7},
pages = {2555--2575},
year  = {1993},
publisher = {Taylor & Francis},
DOI = {10/b57tnm},
}

@article{Le09,
author = {Le Thi Giang},
DOI = {10/dv538q},
title = {The relative hyperbolicity of one-relator relative presentations},
journal = {J. Group Theory},
number = {6},
volume = {12},
year = {2009},
pages = {949--959},
}
\end{spacing}
\vspace{0.5cm}


{
\small
{\scshape
\noindent
Faculty of mechanics and mathematics of Moscow State University,
\par
\noindent
Moscow 119991, Leninskie gory, MSU.
\par
\noindent
Moscow Center for Fundamental and Applied Mathematics.
\par}
{\noindent
{\it Email}: {\rmfamily kuynzereb@gmail.com}}}

\end{document}